\def\?[#1]{\textbf{[#1]}\marginpar{\Large{\textbf{??}}}}
\let\epsilon=\varepsilon 
\newcommand{\eps}{\epsilon}
\newcommand{\mc}{\mathcal}
\newcommand{\cB}{\mc B}
\newcommand{\cU}{\mc U}
\newcommand{\ol}{\overline}
\newcommand{\hra}{\hookrightarrow}
\newcommand{\RR}{{\mathbb R}}
\newcommand{\R}{{\mathbb R}}
\newcommand{\Sph}{\mathbb{S}}
\newcommand{\Ric}{\mathrm{Ric}}
\newcommand{\HH}{{\mathbb H}}
\newcommand{\SP }{{\mathbb S}}
\newcommand{\CC}{{\mathbb C}}
\newcommand{\C}{{\mathbb C}}
\newcommand{\ZZ}{{\mathbb Z}}
\newcommand{\Z}{{\mathbb Z}}
\newcommand{\CI}{{C^\infty}}
\newcommand{\CIc}{{C^\infty_{\rm{c}}}}
\newcommand{\dS}{{\mathrm{dS}}}
\newcommand{\cO}{{\mathcal O}}
\newcommand{\loc}{{\mathrm{loc}}}
\newcommand{\comp}{{\mathrm{comp}}}
\newcommand{\Sl}{{\mathcal{S}\ell}}
\newcommand{\Dl}{{\mathcal{D}\ell}}
\newcommand{\ran}{\operatorname{ran}}
\newcommand{\Id}{\operatorname{Id}}
\newtheorem{thm}{Theorem}
\newtheorem{prop}{Proposition}[section]
\newtheorem{lemm}[prop]{Lemma}
\numberwithin{equation}{section}
\newtheorem{rmk}[prop]{Remark}
\DeclareMathOperator{\Res}{Res}
\let\Im=\Imag
\DeclareMathOperator{\rank}{rank}
\let\Re=\Real
\DeclareMathOperator{\supp}{supp}
\DeclareMathOperator{\vol}{vol}
\DeclareMathOperator{\tr}{tr}
\newcommand{\pa}{\partial}
\newcommand{\la}{\langle}
\newcommand{\ra}{\rangle}
\title{Resonances for obstacles in hyperbolic space}
\author{Peter Hintz}
\email{phintz@berkeley.edu}
\author{Maciej Zworski}
\email{zworski@math.berkeley.edu}
\address{Department of Mathematics, University of California,
Berkeley, CA 94720, USA}
\begin{document}

\begin{abstract}
We consider scattering by star-shaped obstacles in hyperbolic space and show
that resonances satisfy a universal bound $ \Im \lambda \leq - \frac12 $
which is optimal in dimension $ 2 $. In odd dimensions we also show that 
$ \Im \lambda \leq - \frac{\mu}{\rho} $ for a universal constant $\mu$, where $ \rho $ is the radius of a ball containing the obstacle; this gives an improvement for small obstacles. 
In dimensions $3$ and higher the proofs follow the classical vector field approach of
Morawetz, while in dimension $2$ we obtain our bound by working with spaces coming
from general relativity. We also show that in odd dimensions resonances of 
small obstacles are close, in a suitable sense, to Euclidean resonances.
\end{abstract}

\dedicatory{Dedicated to the memory of Cathleen Synge Morawetz 1923--2017}

\maketitle

\section{Introduction}

For $\kappa>0$ we define hyperbolic $n$-space with constant curvature $-\kappa^2$ as
\begin{equation}
\label{EqHyp}
  (\HH^n_\kappa,g_\kappa) = (\R^n, dr^2 + s_\kappa^2 h),
\end{equation}
where $(r,\omega)$ are polar coordinates on $\R^n$, $h=h(\omega,d\omega)$ is the round metric on $\Sph^{n-1}$, and $s_\kappa(r)=\kappa^{-1}\sinh(\kappa r)$. 
We include Euclidean space as the case of $ \kappa = 0 $, 
$ s_0 ( r ) = r $. 

\begin{figure}[!ht]
  \centering
  \includegraphics[width=1.8in]{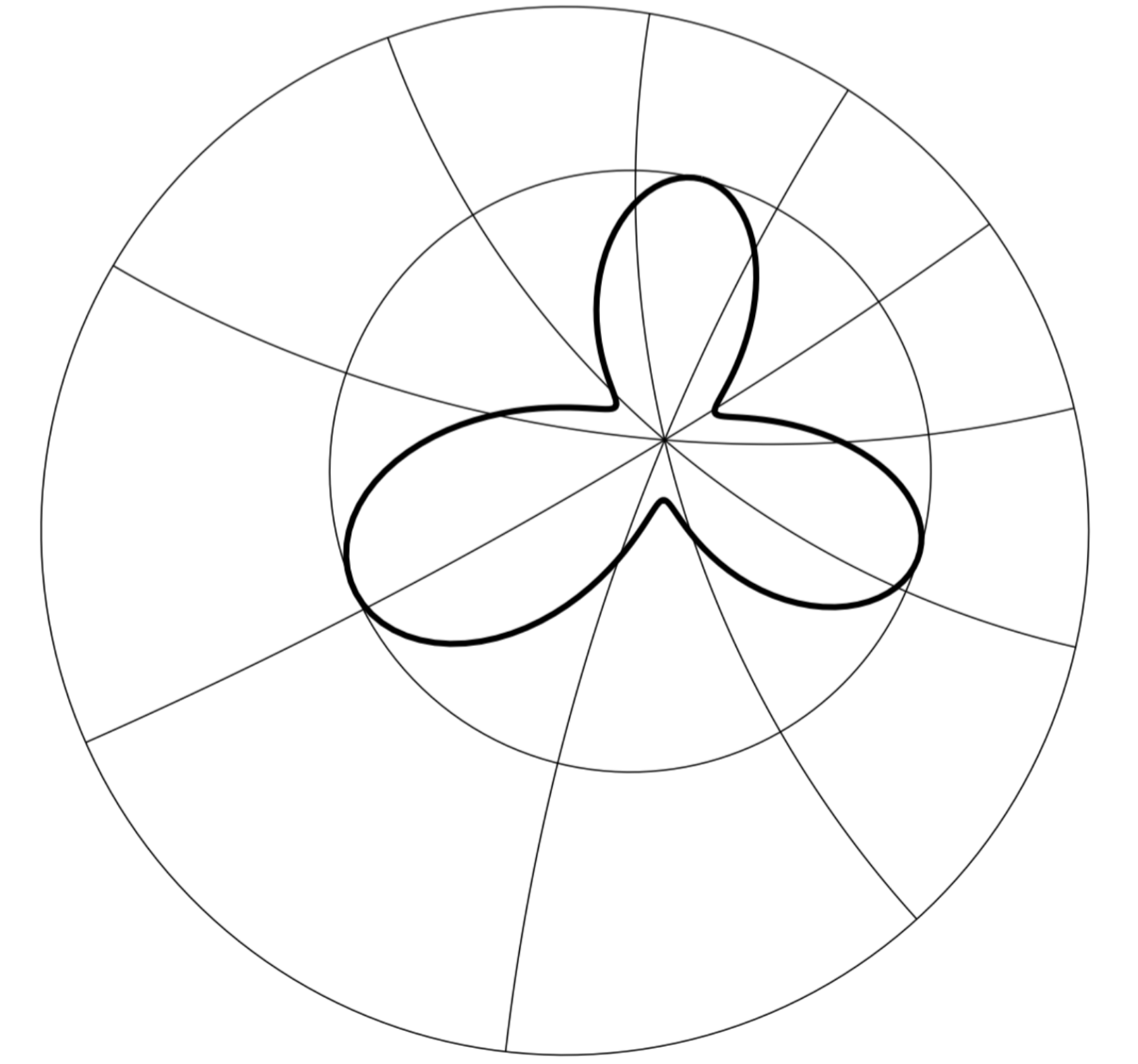} 
  \hspace{0.2in} \includegraphics[width=2.7in]{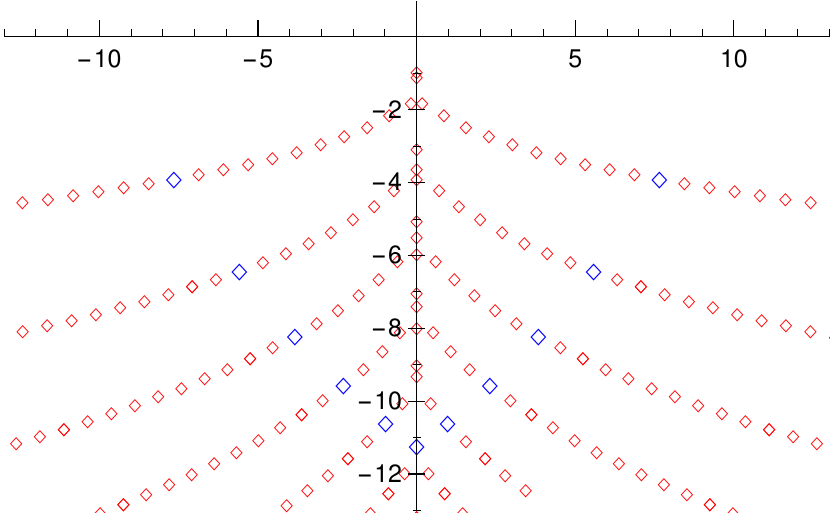}
  \caption{\textit{Left:} a star-shaped obstacle in the Poincar\'e disc with resonances satisfying a
 universal bound $ \Im \lambda \leq - \frac12 $. \textit{Right:} resonances of a disk with radius $R=1$ in $\HH^2$. Highlighted are resonances corresponding to $\ell=12$ (in the notation of \S \ref{s:ball}).}
       \label{f:starh}
\end{figure}

Suppose that $ \mathcal O \subset \RR^n \simeq \HH_\kappa^n $ is a bounded open set with smooth boundary, and denote by
\begin{equation} 
\label{EqHypOp} P_\kappa := - \Delta_{g_\kappa } - \left(\textstyle \frac{n-1}2 \right)^2 \kappa^2
\end{equation}
the self-adjoint operator on $ L^2 (  \mathbb H_\kappa^n \setminus \mathcal O  ,
d \vol_{g_\kappa } ) $ with domain 
$$ \mathcal D ( P_\kappa ) :=  H^2 ( \mathbb H_\kappa^n \setminus \mathcal
O ) \cap H_0^1 (\mathbb H_\kappa^n \setminus \mathcal O ) . $$
The resolvent of $ P_\kappa $, $ \kappa > 0 $, 
\begin{equation}
\label{EqHypResolvent}
  R_\kappa(\lambda) := (P_\kappa-\lambda^2)^{-1} \colon L^2(\HH^n_\kappa\setminus\cO) \to L^2(\HH^n_\kappa\setminus\cO), \ \ \ \Im \lambda > 0 , 
\end{equation}
continues meromorphically to a family of operators defined on $ \CC$:
\[
R_\kappa(\lambda) \colon L^2_{\comp}(\HH^n_\kappa\setminus\cO) \to L^2_{\loc} (\HH^n_\kappa\setminus\cO).
\]
For $\kappa=0$, the same result is true when $ n $ is odd; in even dimensions
the continuation takes place on the logarithmic plane.

We denote the set of poles of $ R_\kappa ( \lambda ) $ 
(included according to their multiplicities \eqref{eq:multk}) 
by $ \Res (\mathcal O, \kappa ) $. The elements of $ \Res (\mathcal O, \kappa ) $
are called {\em scattering resonances}
and they determine decay and oscillations of reflected waves outside of $ \mathcal O $ -- see
\cite{revres} for a recent survey and references.
In the odd-dimensional Euclidean case their study goes back to classical works of Lax--Phillips \cite{LP} and Morawetz \cite{Moa}, and the relation between 
the distribution of resonances and the geometry of obstacles has been much studied, especially for high energies ($ |\Re \lambda| \to \infty $) -- see 
\cite[\S 2.4]{revres}.

When the obstacle is star-shaped, a universal lower bound on {\em resonance widths}, $ |\Im \lambda | $, can be given in terms of the radius of the support of the
obstacle. Following earlier contributions of Morawetz \cite{Moa,Moe,Mo2} and using Lax--Phillips theory \cite{LP}, Ralston \cite{RalstonDecay}
obtained the bound
\begin{equation}
\label{eq:Ralston}  \mathcal O \subset B_{\RR^n }  ( x_0 , \rho) \ \Longrightarrow \ \
\inf_{ \lambda \in \Res (\mathcal O, 0 ) } | \Im \lambda | \geq \rho^{-1}
\end{equation}
for odd  $ n \geq 3 $. Remarkably this bound is optimal in dimensions three and five -- see Fig.~\ref{f:sphere} and \cite{HZ} for a discussion of this result.

In this paper we investigate analogues of \eqref{eq:Ralston} for 
$ \mathcal O \subset B_{\HH^n_\kappa} ( x_0 , \rho ) $. The first result shows that the resonance widths have a universal lower bound independent of the diameter of the obstacle. Intuitively this is due to the fact that infinity is much ``larger" in the hyperbolic case. 

\begin{thm}
\label{t:1}
Suppose that $ \mathcal O \subset \HH^n_\kappa $ is a
star-shaped obstacle. Then 
\begin{equation}
\label{eq:t1} 
\inf_{ \lambda \in \Res (\mathcal O , \kappa  ) } | \Im \lambda | \geq  \kappa / 2  .
\end{equation}
\end{thm}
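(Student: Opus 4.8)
The plan is to argue by contradiction via a Morawetz/Rellich vector field identity, in the spirit of the classical treatment of star-shaped obstacles. Suppose $\lambda\in\CC$ is a resonance with $-\kappa/2<\Im\lambda\le 0$. Since $P_\kappa\ge 0$ (domain monotonicity together with $-\Delta_{g_\kappa}\ge(\frac{n-1}{2})^2\kappa^2$ on $\HH^n_\kappa$) there are no resonances with $\Im\lambda>0$, and the borderline case $\Im\lambda=0$, $\lambda\neq 0$, is covered by the same computation (the Rellich uniqueness argument), so I may assume $\Im\lambda<0$. Then there is a nonzero outgoing resonant state $u$: a solution of $(P_\kappa-\lambda^2)u=0$ on $\HH^n_\kappa\setminus\cO$ with $u|_{\pa\cO}=0$ and outgoing asymptotics at infinity. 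I place the origin of the polar coordinates $(r,\omega)$ at a point with respect to which $\cO$ is star-shaped, so that $\cO=\{r<\rho(\omega)\}$ for a smooth function $\rho$ on $\Sph^{n-1}$.

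I would test the equation against the conjugate of a Morawetz multiplier built from the unit radial field $\pa_r$, of the form $\overline{\pa_r u+c(r)u}$ with $c(r)=\frac{n-1}{2}\,s_\kappa'/s_\kappa+c_0$ for a constant $c_0$ to be fixed; equivalently, after conjugating $u=s_\kappa^{-(n-1)/2}v$ so that $P_\kappa$ becomes $-\pa_r^2+s_\kappa^{-2}\bigl(-\Delta_h+\frac{(n-1)(n-3)}{4}\bigr)$ on $L^2(dr\,d\omega)$, test the conjugated equation against $\overline{\pa_r v+c_0 v}$. Integrating over $\Omega_R:=(\HH^n_\kappa\setminus\cO)\cap\{r<R\}$, taking real parts, and adding a suitable multiple of the imaginary part of the equation tested against $\bar v$ (Green's formula, which is what controls the cross term in $\Im(\lambda^2)$ produced by testing against $\overline{\pa_r v}$) yields an identity of the shape
\[ \int_{\Omega_R}(\text{bulk density})\,=\,\bigl(\text{boundary term on }\pa\cO\bigr)+\bigl(\text{boundary term on }\{r=R\}\bigr). \]

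There are then three sign contributions. First, on $\pa\cO$: since $v$ vanishes there, $\nabla v$ is normal, and the boundary density is a positive multiple of $\langle\pa_r,\nu\rangle\,|\pa_\nu v|^2$, which has a definite sign because $\cO$ is star-shaped. Second, on $\{r=R\}$ as $R\to\infty$: the outgoing condition $\pa_r v\sim i\lambda v$ makes the relevant quadratic combination tend to a quantity of the correct sign. Third, the bulk density: this is where the value $\kappa/2$ enters and where the bound becomes insensitive to the size of $\cO$. The mechanism is that the divergence of $\pa_r$ with respect to $d\vol_{g_\kappa}$ equals $(n-1)\,s_\kappa'/s_\kappa=(n-1)\kappa\coth(\kappa r)\ge(n-1)\kappa$, bounded below uniformly in $r$ — this is the ``infinity is larger'' phenomenon. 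Carrying the identity through, the bulk density is a sum of $|\pa_r v|^2$ and $|\nabla_h v|^2$ terms with nonnegative coefficients (once $c_0\ge|\Im\lambda|$) together with a multiple of $|v|^2$ whose coefficient, after using $\Im(\lambda^2)=2\Re\lambda\,\Im\lambda$, is a positive multiple of $\kappa\coth(\kappa r)-2|\Im\lambda|$ up to terms decaying like $s_\kappa^{-2}$; since $\coth\ge 1$ this is strictly positive for every $r>0$ precisely when $\kappa>2|\Im\lambda|$, i.e.\ $|\Im\lambda|<\kappa/2$. Under this hypothesis the bulk is coercive, all three contributions have the same sign and sum to zero, so each vanishes, which forces $v\equiv 0$ on $\Omega_R$ for every $R$ — a contradiction. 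Hence $\Res(\cO,\kappa)\cap\{|\Im\lambda|<\kappa/2\}=\emptyset$, which is \eqref{eq:t1}.

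The delicate point, which I expect to be the main obstacle, is the second step: resonant states grow like $e^{|\Im\lambda|r}$ at infinity, so the limit $R\to\infty$ in the boundary term is not automatic. I would handle it either by inserting the precise outgoing expansion of $u$ in the region $\{r>R_0\}$ where $\cO$ is absent, or, following Lax--Phillips and Ralston, by recasting the whole identity as a local-energy-decay estimate for $w(t,\cdot)=e^{-i\lambda t}u$, which solves the shifted wave equation $(\pa_t^2+P_\kappa)w=0$, on the expanding regions $\{r<t+R_0\}$, using finite propagation speed. Finally, when $n=2$ the conjugated angular operator is $s_\kappa^{-2}(-\Delta_h-\frac14)$, whose lowest spherical mode carries the \emph{negative} potential $-\frac14 s_\kappa^{-2}$, so the positivity in the third step fails; that case requires the separate argument announced in the abstract.
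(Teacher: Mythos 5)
Your proposal is a \emph{stationary} Morawetz/Rellich multiplier argument applied directly to the outgoing resonant state, and this is genuinely different from what the paper does. For $n\geq 3$ the paper proves a \emph{time-dependent} local-energy-decay estimate (\S\ref{s:moe}): after the conjugation of Lemma~\ref{l:conj}, which brings $s^2 s^{(n-1)/2}\Box_\kappa s^{-(n-1)/2}$ to the form $\Box_G-\frac{(n-1)(n-3)}4$ with $G=s^{-2}(-dt^2+dr^2)+h$, it constructs a genuine Killing vector field $V=a\pa_t+b\pa_r$ of $G$ with $a=2\kappa^{-2}(\cosh\kappa r\cosh\kappa t-1)$, $b=2\kappa^{-2}\sinh\kappa r\sinh\kappa t$, and the resulting conserved energy together with $a-b=2\kappa^{-2}(\cosh\kappa(t-r)-1)$ yields $E(u,t,R)\leq Ce^{-\kappa t}E(u,0)$ \eqref{eq:EutR}; this is converted into the resonance bound via the resonance expansion of \S\ref{s:rfs}. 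For all $n\geq 2$ the paper gives a second proof (\S\ref{s:rela}) by observing that the resonant state gives rise, via the even compactification and the Kerr-star coordinate $t_*$, to a smooth solution of the Klein--Gordon equation on static de~Sitter, which is conformal to a free Minkowski wave equation; the uniform Minkowski energy bound then produces $\int|\pa_X\tilde u|^2\,dX\lesssim e^{-\kappa t_*}$ and hence $\Im\lambda\leq-\kappa/2$ with no need for a multiplier identity at all. Your approach is closer in spirit to the direct resonant-state arguments of Morawetz \cite{Mo2}, Fernandez--Lavine \cite{fela}, and Hintz--Zworski \cite{HZ}, which the paper explicitly cites and chooses not to pursue.

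The central technical claim in your sketch, however, does not hold as stated. You assert that after testing the conjugated equation $(-\pa_r^2+s^{-2}(-\Delta_h+\frac{(n-1)(n-3)}4)-\lambda^2)v=0$ against $\overline{\pa_r v+c_0 v}$ and adding a multiple of the imaginary part of the Green identity, the bulk coefficient of $|v|^2$ becomes a positive multiple of $\kappa\coth(\kappa r)-2|\Im\lambda|$. I cannot reproduce this. The divergence $(n-1)\kappa\coth(\kappa r)$ of $\pa_r$ does appear in the unconjugated picture, but it cancels identically once you include the $\frac{n-1}2\frac{s'_\kappa}{s_\kappa}$ piece of $c(r)$ (which is what makes the multiplier formally skew-adjoint); in the conjugated picture, where the measure is $dr\,d\vol_h$, the vector field $\pa_r$ is divergence-free and no $\coth$ term ever arises in the $|v|^2$ coefficient. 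What \emph{does} arise is a cross term $\Im(\lambda^2)\int\Im(v\,\overline{\pa_r v})\,dr\,d\vol_h$, and this is a genuine bulk integral: the imaginary part of the Green identity $\Im[\,\overline{v}\,\pa_r v\,]_\rho^R=-\Im(\lambda^2)\int|v|^2$ relates \emph{boundary} values of the Wronskian to a bulk $|v|^2$ integral, but it does not express the bulk integral $\int\Im(v\,\overline{\pa_r v})$ in terms of anything sign-definite. If one does integrate the Wronskian relation once more, one picks up a weight $(R-r)$, which is exactly the wrong thing to have when sending $R\to\infty$. Combined with the boundary-at-infinity difficulty that you correctly flag (the resonant state grows like $e^{|\Im\lambda|r}$, so all the putative boundary terms at $r=R$ diverge), the stationary identity as you describe it does not close, and in particular the threshold $\kappa/2$ does not emerge from the bulk. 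Your concluding remark about $n=2$ (the failure of $m^2\geq 0$) is accurate, but note that the paper's de~Sitter argument of \S\ref{s:rela} handles \emph{all} $n\geq 2$ uniformly, so the dichotomy is not quite $n\geq 3$ vs.\ $n=2$ but rather two independent proofs with different ranges of validity.
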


Here, $\mathcal O$ being star-shaped means that there exists a point $o\in\mathcal O$ so that for all $p\in\partial\mathcal O$, the geodesic segment from $o$ to $p$ is contained in $\overline{\mathcal O}$. The estimate is sharp when $n=2$ and $\mathcal O=\emptyset$.

For the proof, we use the characterization of resonances and resonant states given by Vasy \cite{vasy2,vasy1}, and use ideas from general relativity to prove estimates on resonant states directly (\S \ref{s:rela}). In dimensions $n\geq 3$, we give an alternative argument based on the vector field method of Morawetz (\S \ref{s:moe}) and prove exponential energy decay for solutions of a certain wave equation on $\HH^n_\kappa$ with Dirichlet boundary conditions on $\partial\mathcal O$. (Relating such decay estimates to bounds for resonances via resonance expansions of waves, discussed in Theorem~\ref{t:expansion} below, requires the technical assumption that the boundary $\partial\mathcal O$ be nowhere flat to infinite order: this is sufficient in order for the resolvent to satisfy high energy estimates.) Using a hyperbolic space version of Morawetz's estimate for $ n \geq 3 $ and a slight refinement of the argument from \cite{Moa} gives an improvement for small obstacles in odd dimensions; this is due to the sharp Huyghens principle.

\begin{thm}
\label{t:22}
Suppose that $ \mathcal O \subset \HH^n_\kappa $ is a star-shaped obstacle and that $ n \geq 3 $ is {\em odd}; assume that the boundary $\partial\mathcal O$ is nowhere flat to infinite order. Then
\begin{equation}
\label{eq:t2}
 \mathcal O \subset B_{\HH^n_\kappa }  ( x_0 , \rho) \ \Longrightarrow \ \
\inf_{ \lambda \in \Res ( \mathcal O , \kappa ) } | \Im \lambda | \geq 
\mu \rho^{-1}
\end{equation}
for a universal constant $ \mu $ (see \eqref{eq:unialph} for a more precise 
statement).
\end{thm}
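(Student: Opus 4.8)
The plan is to adapt the classical Morawetz vector field identity to the hyperbolic setting and to exploit the sharp Huyghens principle in odd dimensions to get a bound that scales like $\rho^{-1}$ rather than the diameter-independent $\kappa/2$ of Theorem \ref{t:1}. Throughout I would work with the stationary problem: a resonant state $u$ associated to $\lambda\in\Res(\mathcal O,\kappa)$ solves $(P_\kappa-\lambda^2)u=0$ in $\HH^n_\kappa\setminus\mathcal O$ with $u|_{\pa\mathcal O}=0$ and an outgoing radiation condition at $r\to\infty$. The geometry in \eqref{EqHyp} is rotationally symmetric about a chosen origin, which I would take to be (near) the center $x_0$ of the ball $B_{\HH^n_\kappa}(x_0,\rho)\supset\mathcal O$; then the radial vector field $\pa_r$ plays the role of the Euclidean $\pa_r$ in Morawetz's argument, with the warping factor $s_\kappa(r)=\kappa^{-1}\sinh(\kappa r)$ entering the weights.

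First I would record the hyperbolic Rellich-Morawetz identity: multiplying $(P_\kappa-\lambda^2)u=0$ by the complex conjugate of $(\pa_r + a(r))\bar u$ for a suitable radial function $a$ (the analogue of $a(r)=\frac{n-1}{2r}$ in the Euclidean case) and integrating by parts over $\HH^n_\kappa\setminus\mathcal O$. The boundary term on $\pa\mathcal O$ has a sign because $\mathcal O$ is star-shaped (so $\pa_r$ points outward along $\pa\mathcal O$, exactly as in \S \ref{s:moe}); the bulk terms, after choosing $a$ to cancel the first-order contributions, produce a positive-definite quadratic form in $\nabla u$ and $u$ plus the crucial term involving $\Im(\lambda^2)=2\Re\lambda\,\Im\lambda$ times $|\Im\lambda|$-weighted $L^2$ norms of $u$. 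The outgoing condition at infinity is handled exactly as in the hyperbolic resolvent theory (the relevant function spaces and the absence of problematic boundary contributions at $r\to\infty$ are what \S \ref{s:moe} sets up). Combining the signs yields an inequality of the schematic form $|\Im\lambda|\,\|u\|_{L^2(\text{annulus})}^2 \lesssim (\text{boundary term at }\pa\mathcal O)$, which on its own already gives Theorem \ref{t:1}.

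To upgrade to the $\rho^{-1}$ bound I would follow the refinement of \cite{Moa}: instead of integrating the identity over all of $\HH^n_\kappa\setminus\mathcal O$, one localizes to the exterior of the ball of radius $\rho$ and uses the sharp Huyghens principle, valid precisely because $n$ is odd, to control the contribution coming from the region $\rho<r$ in terms of data at $r=\rho$. Equivalently, on the region $r>\rho$ one has an exact outgoing representation of the resonant state (spherical-harmonic-by-spherical-harmonic, using that the radial ODE for $P_\kappa$ in $\HH^n$ for odd $n$ has elementary outgoing solutions — this is where Huyghens enters), and one integrates the Morawetz identity only over $\{\,r<\rho\,\}\setminus\mathcal O$. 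The boundary term at $r=\rho$ produced by this truncation has the right sign and magnitude thanks to the outgoing condition, and the weights $s_\kappa(r)$ over the compact interval $[\,0,\rho\,]$ are comparable to their Euclidean counterparts up to constants depending only on $\kappa\rho$; tracking these constants is exactly what yields the explicit $\mu$ referenced in \eqref{eq:unialph}. The net inequality becomes $|\Im\lambda|\cdot\rho\cdot(\text{positive}) \le (\text{positive})$, i.e.\ $|\Im\lambda|\ge\mu\rho^{-1}$.

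The main obstacle I anticipate is the choice of the multiplier weight $a(r)$ and the verification that, with the hyperbolic warping $s_\kappa$, the resulting bulk quadratic form is genuinely non-negative (in the Euclidean case the algebra is classical, but the $\sinh$-weights introduce curvature correction terms that must be shown not to spoil positivity) together with the careful bookkeeping of the constant $\mu$ so that it is genuinely universal — independent of $\kappa$, of the obstacle, and of $\lambda$ — which requires estimating ratios like $s_\kappa(r)/s_\kappa(\rho)$ and $s_\kappa'(r)/s_\kappa(r)$ uniformly on $[\,0,\rho\,]$. A secondary technical point is making the truncation at $r=\rho$ rigorous for genuine (non-$L^2$) resonant states, i.e.\ justifying the boundary term there; this is done by approximating $\lambda$ from $\Im\lambda>0$, where $u\in L^2$ and all integrations by parts are legitimate, and then passing to the limit — the same density/limiting scheme already invoked in \S \ref{s:moe}.
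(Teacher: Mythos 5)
Your proposal takes a genuinely different route from the paper — a stationary Rellich--Morawetz multiplier identity applied directly to a resonant state — but it has several gaps that would have to be closed, and the central mechanism you invoke for the $\rho^{-1}$ improvement does not work as described.

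The paper's proof of Theorem~\ref{t:22} is entirely in the \emph{time domain}. First, §\ref{s:moe} establishes polynomial local energy decay $E(u,t,R)\leq p(t)\,E(u,0)$ for the wave equation $\Box_\kappa u=0$ using the Killing field $V=a\pa_t+b\pa_r$ with $a,b$ as in \eqref{eq:atbr1}; this already yields \eqref{eq:t1} via the resonance-expansion machinery of §\ref{s:rfs}. Then Lemma~\ref{l:mor2} decomposes a solution at a late time $T$ as $u=u_T+r_T$, where $r_T$ solves the \emph{free} equation; the sharp Huyghens principle \eqref{eq:sHp} — a support statement for the forward fundamental solution — forces $r_T$ to vanish in $d(x,0)\leq t-(T+\rho)$, and a domain-of-dependence argument yields $E(u_T,T+2\rho)\leq 4E(u,T,3\rho)$. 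Lemma~\ref{l:mor3} then iterates: applying the weak decay estimate to $u_T$ (whose Cauchy data are now supported in $B(0,3\rho)$) and looping produces \emph{exponential} decay $E(u,t,3\rho)\leq C e^{-2\alpha t}E(u,0)$ with the explicit variational expression $\alpha=-\min_{\tau\geq 4\rho}\tfrac{\log(4 p(\tau-2\rho))}{2\tau}$ of \eqref{eq:defalpha}. Substituting the $p(t)$ coming from \eqref{eq:EutR} gives $\alpha=A(\kappa\rho)/\rho$ with $A(\tilde\rho)\geq\mu$ uniformly; the universality of $\mu$ arises from this minimization, not from comparing $s_\kappa$ with $s_0$ on $[0,\rho]$.

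Concretely, the gaps in your sketch are: (i) Huyghens is a support property of the time-dependent propagator, and its only role in the paper is to make the decomposition of Lemma~\ref{l:mor2} and the bootstrap of Lemma~\ref{l:mor3} possible; there is no clean frequency-domain surrogate that lets you truncate a stationary Rellich identity at $r=\rho$ and control the resulting interface term. Outgoing resonant states with $\Im\lambda<0$ grow exponentially in $r$, so the boundary term at $r=\rho$ in a truncated identity has no sign \emph{a priori}, and the sign you assert ``thanks to the outgoing condition'' would need an actual computation that you do not supply. (ii) Your proposed limiting scheme — approximate from $\Im\lambda>0$ where integration by parts is legitimate — does not help, because for $\Im\lambda>0$ the operator $P_\kappa-\lambda^2$ has trivial kernel (Lemma~\ref{LemmaHypPos} implies $P_\kappa\geq 0$), so there is no resonant state to pass to the limit with. (iii) Your claim that the stationary Rellich identity ``on its own already gives Theorem~\ref{t:1}'' is not established; the paper's two proofs of Theorem~\ref{t:1} (the time-dependent Killing-field energy estimate in §\ref{s:moe}, and the conformal-to-de~Sitter wave-equation argument in §\ref{s:rela}) are both dynamical. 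A stationary argument for a resonance-width lower bound is in principle possible (cf.\ \cite{fela}), but it requires a genuine proof, and it is not at all clear that it can be made to give the improved $\rho^{-1}$ scaling — indeed Morawetz's original exponential-decay argument, which the paper adapts, is the time-domain bootstrap you are missing.

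If you want to salvage the stationary route, you would need to replace the vague invocation of Huyghens by an explicit treatment of the $r>\rho$ region (spherical-harmonic by spherical-harmonic using the elementary outgoing solutions $u_{2k+1}(r;\sigma)$ of §\ref{s:ball}), and show that the resulting boundary term at $r=\rho$ has a favorable sign uniformly in $\lambda$; absent that, the time-domain bootstrap of Lemmas~\ref{l:mor2}--\ref{l:mor3} is the only proof on offer.
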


\noindent
{\bf Remark.} Jens Marklof suggested a formulation of Theorems \ref{t:1} and \ref{t:22} which does not depend on $ \kappa $: there exist 
constants $ c_n $ such that for star-shaped obstacles  $\mathcal O \subset \HH^n_{\kappa } $, $ n $ odd,  
\[ \mathcal O \subset B_{\HH^n_\kappa }  ( x_0 , \rho) \ \Longrightarrow \ \
\inf_{ \lambda \in \Res ( \mathcal O , \kappa ) } | \Im \lambda | \geq 
c_n \frac{ \vol (  \partial B_{\HH^n_\kappa }  ( 0 , \rho) ) }{
\vol (  B_{\HH^n_\kappa }  ( 0 , \rho) ) ) } . \]

\begin{figure}[htbp]
\begin{center}
\includegraphics[width=2.7in]{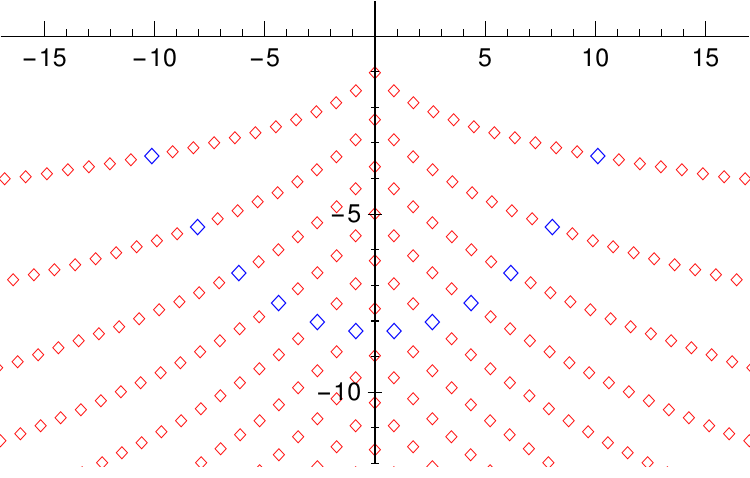}
\hspace{0.2in}
\includegraphics[width=2.7in]{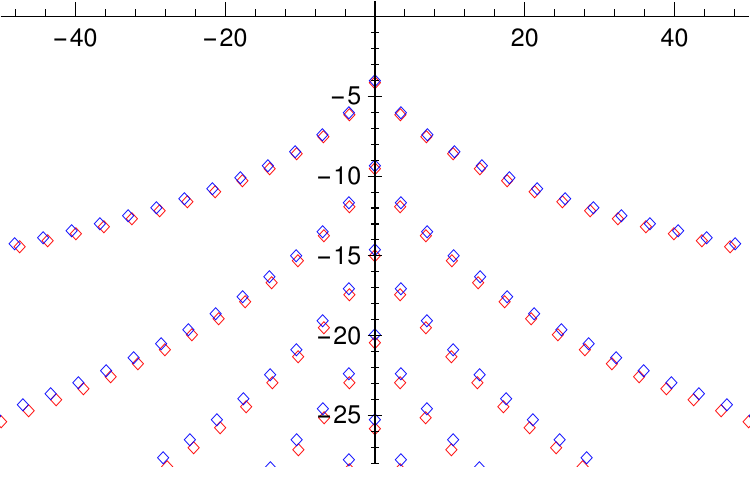}
\caption{\label{f:sphere}
\textit{Left:} resonances for the ball of radius one in 
$ \RR^3 $. For each spherical
momentum $ \ell $ they are given by solutions of  $ H^{(2)}_{\ell +1/2} (\lambda ) = 0 $ where $ H^{(2) }_\nu $ is the Hankel function of the second
kind and order $ \nu $. Each zero appears as a resonance of multiplicity
$ 2 \ell +1 $; highlighted are resonances corresponding to $\ell=12$. \textit{Right:} resonances of the ball with radius $R=0.25$ in $\HH^3$ (red) and in $\RR^3$ (blue); this illustrates Theorem \ref{ThmMain1}.
}
\label{FigHypRes}
\end{center}
\end{figure}

We expect that $ \mu = 1 $ in \eqref{eq:t2}. (An adaptation of Ralston's argument \cite{RalstonDecay} should work 
but would require some buildup of scattering theory; for a proof of his 
crucial estimate without using Lax--Phillips theory, see \cite[Exercise 3.5]{dizzy}.)
That the estimate~\eqref{eq:t2} is independent of $\kappa$ is related to rescaling:
identifying an obstacle with a subset of $ \RR^n $ and 
denoting by $ x \mapsto \epsilon x $ the Euclidean dilation, we see that 
if $\sigma \in \Res ( \epsilon \mathcal O, 1 ) $ 
then $\eps\sigma \in \Res ( \mathcal O, \epsilon ) $,
and $ \epsilon \sigma $ should be close to a resonance in $ \Res ( \mathcal O, 0 ) $.
So even though the bound \eqref{eq:t1} gets worse for small $ \kappa $, the
bound in odd dimensions is close to \eqref{eq:Ralston} and improves for small diameters.
This is illustrated by 
Fig.~\ref{FigHypRes} and confirmed
by the following theorem:

\begin{thm}
\label{ThmMain1}
Suppose that $ \mathcal O \subset \HH_\kappa^n \simeq 
\RR^n $ is an arbitrary bounded obstacle with smooth boundary and that $n\geq 3$ is odd. Then
\[ \Res(\cO,\kappa)\to\Res(\cO,0), \ \ \ \kappa\to 0, \]
locally uniformly and with multiplicities. 
\end{thm}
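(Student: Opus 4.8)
The plan is to realize all the resolvents $R_\kappa(\lambda)$ on a common functional-analytic footing so that convergence of resonances (with multiplicity) reduces to convergence of a holomorphic family of Fredholm operators. First I would fix a large ball $B = B_{\RR^n}(0,\rho_0) \supset \ol{\cO}$ and choose cutoffs $\chi_0, \chi_1 \in \CIc(\RR^n)$, equal to $1$ near $\ol{\cO}$, with $\chi_1 = 1$ near $\supp\chi_0$ and $\supp\chi_j \subset B$. On $B\setminus\cO$ all the metrics $g_\kappa$ converge to $g_0$ in $\CI$ as $\kappa\to 0$ (since $s_\kappa(r)\to r$ in $\CI_{\loc}$, uniformly with all derivatives on compact sets), and in particular $P_\kappa \to P_0 = -\Delta_{\RR^n}$ as differential operators with $\CI$ coefficients on that fixed compact region. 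The Dirichlet Laplacians $\Delta_{g_\kappa}$ on $B\setminus\cO$ (with Dirichlet conditions on both $\partial\cO$ and $\partial B$) therefore converge in the norm-resolvent sense, and one gets a uniformly bounded interior parametrix: there is $\kappa_0>0$ and $C$ so that for $\lambda$ in any fixed compact set $K\subset\CC$, away from the (finitely many, $\kappa$-convergent) Dirichlet eigenvalues, the operator $(\Delta_{g_\kappa}^{B\setminus\cO,\mathrm{Dir}} - \lambda^2)^{-1}$ exists and is holomorphic in $\lambda$, with norm and all of its derivatives in $\lambda$ bounded uniformly in $\kappa \le \kappa_0$.

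Next I would set up the standard ``glued'' meromorphic continuation. Let $R_\kappa^{\mathrm{free}}(\lambda)$ be the free resolvent on $\HH^n_\kappa$ (no obstacle) and $R_\kappa^{\mathrm{int}}(\lambda) = (\Delta_{g_\kappa}^{B\setminus\cO,\mathrm{Dir}}-\lambda^2)^{-1}$ the interior Dirichlet resolvent above. Define
\begin{equation}
\label{eq:glue}
  Q_\kappa(\lambda) := (1-\chi_0)\,R_\kappa^{\mathrm{free}}(\lambda)\,(1-\chi_1) + \chi_0\,R_\kappa^{\mathrm{int}}(\lambda)\,\chi_1,
\end{equation}
so that $(P_\kappa - \lambda^2)Q_\kappa(\lambda) = I + A_\kappa(\lambda)$, where $A_\kappa(\lambda) = [\,P_\kappa,\chi_0\,]\bigl(R_\kappa^{\mathrm{int}}(\lambda)-R_\kappa^{\mathrm{free}}(\lambda)\bigr)\chi_1$ is a compact operator with Schwartz kernel supported in the fixed compact annulus $\supp(d\chi_0)\times\supp\chi_1$. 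Then $R_\kappa(\lambda) = Q_\kappa(\lambda)(I+A_\kappa(\lambda))^{-1}$ on the range where the right side is defined, and $\Res(\cO,\kappa)$ coincides with the poles of $(I+A_\kappa(\lambda))^{-1}$, with multiplicities given in the usual way by $\frac{1}{2\pi i}\oint \tr\bigl((I+A_\kappa(\lambda))^{-1}\partial_\lambda A_\kappa(\lambda)\bigr)\,d\lambda$ over a small loop. So everything reduces to: (i) $A_\kappa(\lambda)\to A_0(\lambda)$ in operator norm, locally uniformly in $\lambda$, as $\kappa\to 0$; and (ii) the convergence is through a \emph{fixed} finite-rank-approximable family, i.e. $A_\kappa(\lambda)$ lies in a fixed bounded set of, say, trace-class (or at least $p$-th Schatten class) operators, so that Gohberg--Sigal / Rouch\'e-type perturbation theory for holomorphic Fredholm families applies and yields convergence of zeros with multiplicity.

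For (i): since $\chi_0$ has fixed compact support, $[P_\kappa,\chi_0]\to[P_0,\chi_0]$ in coefficients; $R_\kappa^{\mathrm{int}}(\lambda)\to R_0^{\mathrm{int}}(\lambda)$ by the norm-resolvent convergence of the interior problems (mapping $H^{-1}\to H^1$ on the fixed domain, uniformly on $K$); and the free-resolvent piece $\chi_a R_\kappa^{\mathrm{free}}(\lambda)\chi_b$ (with $\chi_a,\chi_b$ compactly supported) converges to $\chi_a R_0^{\mathrm{free}}(\lambda)\chi_b$ in $L^2\to H^2$ norm, locally uniformly in $\lambda$ on $\CC$ — this is the one genuinely analytic input and I would quote it from the known structure of the hyperbolic resolvent (e.g. its explicit kernel in terms of Legendre functions, or the general framework for the meromorphic continuation on asymptotically hyperbolic spaces), noting that as $\kappa\downarrow 0$ the continuation across $\{\kappa=0\}$ is uniform on compact subsets of $\CC$ precisely because $n$ is odd, so the Euclidean continuation is also to all of $\CC$ rather than a logarithmic cover. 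I expect \textbf{this analytic uniformity of the free hyperbolic resolvent as $\kappa\to0$}, together with checking that the degeneration of the metric at infinity does not spoil local uniformity in $\lambda$, to be the main obstacle; the rest is soft. Once (i) and (ii) are in hand, a contour-integral/Rouch\'e argument gives: for each compact $K$ with no resonances of $\cO$ (for $\kappa=0$) on $\partial K$, and all small enough $\kappa$, $R_\kappa(\lambda)$ has in $K$ exactly as many poles (with multiplicity) as $R_0(\lambda)$, and they converge; this is exactly the assertion ``$\Res(\cO,\kappa)\to\Res(\cO,0)$ locally uniformly and with multiplicities.''
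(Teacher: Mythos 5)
Your proposal takes a genuinely different route from the paper. You build a ``glued'' parametrix out of the free hyperbolic resolvent and an interior Dirichlet resolvent, reducing to perturbation theory for the compact error $A_\kappa(\lambda)$; the paper instead reduces the problem to the boundary $\pa\cO$ via single and double layer potentials, and studies the boundary integral operator $G_\kappa(\lambda)\in\Psi^{-1}(\pa\cO)$ built from the explicit free resolvent kernel of Lemma~\ref{LemmaHypFree}. Both routes isolate the same analytic input — convergence of compactly truncated free resolvents $\chi R_\kappa^0(\lambda)\chi\to\chi R_0^0(\lambda)\chi$ locally uniformly on $\C$, and you correctly identify both this and the role of odd $n$ (entire Euclidean continuation) as the crux. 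What the layer-potential route buys is a cleaner multiplicity bookkeeping: the resonance count lives on a single fixed compact manifold $\pa\cO$ as the Gohberg--Sigal count of $G_\kappa(\lambda)^{-1}$, and the spurious contributions (where $\lambda^2$ is an interior Dirichlet eigenvalue) are confined to the real axis and handled once by the combined layer ansatz~\eqref{EqMainDlAnsatz}.

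Two points in your sketch need real work rather than a gesture. First, a technical fix: the two-cutoff gluing $Q_\kappa=(1-\chi_0)R^{\mathrm{free}}_\kappa(1-\chi_1)+\chi_0 R^{\mathrm{int}}_\kappa\chi_1$ with $\chi_1=1$ near $\supp\chi_0$ does not satisfy $(P_\kappa-\lambda^2)Q_\kappa=I+\text{compact}$ (the non-commutator terms sum to $1-(\chi_1-\chi_0)$, not $1$); you need three nested cutoffs $\chi_0\prec\chi_1\prec\chi_2$ with $Q_\kappa=\chi_2 R^{\mathrm{int}}_\kappa\chi_1+(1-\chi_0)R^{\mathrm{free}}_\kappa(1-\chi_1)$, or an equivalent arrangement. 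Second, and more seriously: you declare that $\Res(\cO,\kappa)$ ``coincides with the poles of $(I+A_\kappa(\lambda))^{-1}$, with multiplicities given in the usual way.'' That identification is not automatic. On the one hand, $A_\kappa(\lambda)$ inherits poles from $R^{\mathrm{int}}_\kappa(\lambda)$ at interior Dirichlet frequencies, which must cancel and must be shown not to contaminate the Rouch\'e count — this is the analogue of the paper's discussion after~\eqref{EqHypBVPCont} and its use of the combined-layer ansatz to prove~\eqref{EqMainConvReal}. On the other hand, even away from those, you must prove that the Gohberg--Sigal multiplicity of $I+A_\kappa(\lambda)$ equals $m_\kappa(\lambda)$ as defined in~\eqref{eq:multk}; this is precisely the content of the paper's Lemma~\ref{LemmaHypFormula} and Proposition~\ref{PropHypBVPPoles}, whose proofs are not short. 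Citing a reference that establishes this equivalence in the gluing framework (e.g.\ the discussion of multiplicities in \cite[\S 4]{dizzy}) would be acceptable, but leaving it as ``the usual way'' is a gap.
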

A more precise version is given in Theorem~\ref{ThmMain} in \S \ref{s:smalle}.

\medskip

\noindent
{\sc Acknowledgments.} We would like to thank Steve Zelditch whose comments on \cite{revres} provided motivation for this project, and Volker Schlue for useful discussions. We would also like to thank an anonymous referee for a careful reading of the manuscript and a number of valuable suggestions which in particular led to the addition of \S\ref{s:nontrap}. PH is grateful to the Miller Institute at the University of California, Berkeley for support, and MZ acknowledges partial support under the National Science Foundation grant DMS-1500852. This research was partially conducted during the period PH served as a Clay Research Fellow.

\section{Resonances for balls in \texorpdfstring{$ \HH_\kappa^n $}{hyperbolic space}}
\label{s:ball}

As motivation for the proofs of the main results we present computations of
resonances of the geodesic ball of radius $R$ in $\HH^n_\kappa$, with Dirichlet boundary conditions. (See also Borthwick \cite{bo:upper}.)

The starting point is the calculation
\[
  s_\kappa^{\frac{n-1}{2}}\Bigl(-\Delta_{\HH^n_\kappa}-\Bigl(\frac{n-1}{2}\Bigr)^2\kappa^2-\sigma^2\Bigr)s_\kappa^{-\frac{n-1}{2}} = D_r^2 + s_\kappa^{-2}\Bigl(-\Delta_{\SP^{n-1}}+\frac{(n-1)(n-3)}{4}\Bigr)-\sigma^2
\]
for $s_\kappa(r)=\kappa^{-1}\sinh(\kappa r)$, see Lemma \ref{l:conj} below. Decomposing into spherical harmonics and using that the eigenvalues of $\SP^{n-1}$ are given by $\ell(\ell+n-2)$, $\ell\in\ZZ_{\geq 0}$, it suffices to study the radial operator
\[
  P_{n,\ell}(\sigma) := D_r^2 + s_\kappa^{-2}\Bigl(\frac{(n-1)(n-3)}{4}+\ell(\ell+n-2)\Bigr)-\sigma^2.
\]
Our objective is to calculate non-trivial solutions of $P_{n,\ell}(\sigma)u=0$ which are \emph{outgoing}, which means that $u=e^{i r\sigma} v(\coth\kappa r)$, where $v=v(x)$, $x=\coth\kappa r$, is smooth in $[1,\infty)_x$ \emph{down to $x=1$}. (This is precisely the condition~\eqref{eq:outgoing} in Theorem~\ref{t:mercont} below, after rescaling to the case $\kappa=1$.) The space of such $u$ is a 1-dimensional space, and if, for fixed $\sigma$, such a $u$ vanishes at $r=R$, then $\sigma$ is a resonance for the $R$-ball in $\HH^n_\kappa$. By direct computation, we have
\[
  P_{n+2,\ell-1}(\sigma) = P_{n,\ell}(\sigma),
\]
hence $P_{n,\ell}=P_{n+2\ell,0}$, and it suffices to calculate outgoing solutions $u$ of $P_{n,0}(\sigma)u=0$ for all $n\geq 2$. Using $(\sinh\kappa r)^{-2}=x^2-1$, one finds that $e^{-i r\sigma}P_{n,0}(\sigma)e^{i r\sigma}v(\coth\kappa r)=0$ is equivalent to
\[
  \tilde P_n(\sigma)v := \Bigl(\pa_x(1-x^2)\pa_x + 2 i\kappa^{-1}\sigma\pa_x + \frac{(n-1)(n-3)}{4}\Bigr)v=0
\]
Changing variables $y=(1-x)/2$, this is a hypergeometric equation. For \emph{odd} $n=2k+1$, smooth solutions of this equation are \emph{polynomials} of $x$. To see this directly, we make the ansatz
\[
  v(x) = \sum_{j=0}^\infty \frac{(x-1)^j}{\Gamma(j+1-i\kappa^{-1}\sigma)}c_{k,j},\quad c_{k,0}=1;
\]
plugged into the ODE, this yields the recursion relation
\[
  c_{k,j}=\frac{k(k-1)-j(j-1)}{2j}c_{k,j-1}, \ \ j\geq 1,
\]
in particular $c_{k,j}=0$ for all $k\geq j$. Therefore, multiplying through by $\Gamma(k-i\kappa^{-1}\sigma)$ in order to deal with integer coincidences $k-i\kappa^{-1}\sigma\in\ZZ_{<0}$, the non-trivial outgoing solution $u_n(r;\sigma)$ of $P_n(\sigma)u_n(r;\sigma)=0$, $n=2k+1$, is given by
\[
  u_{2k+1}(r;\sigma) = e^{i r\sigma}\sum_{j=0}^{k-1}
  \frac{(\coth(\kappa r)-1)^j}{2^j j!} 
  \prod_{l=1}^j\bigl(k(k-1)-l(l-1)\bigr)
  \prod_{m=j+1}^{k-1}(m-i\kappa^{-1}\sigma)
   ,
\]
where the product $ \prod_{ l =1}^0 $ is defined to be $ 1 $.

Note that $e^{-ir\sigma}u_{2k+1}(r;\sigma)$ is a polynomial in $\sigma$ of degree $k-1$. If the size $R$ of the obstacle is fixed, the zeros of $u_{2k+1}(R;\sigma)=0$ are the resonances. See Fig.~\ref{FigHypRes}.

Suppose now the obstacle is large, so $\coth\kappa R$ is close to $1$, and fix $k$. Then $u_{2k+1}(R;\sigma)$, as a function of $\sigma$, is well-approximated by a constant multiple of
\[
  e^{i R\sigma}\prod_{m=1}^{k-1}(m-i\kappa^{-1}\sigma),
\]
whose zeros are located at $-i\kappa m$, $m=1,\ldots,k-1$. By Rouch\'e's theorem, this implies that for $n$ odd, $\kappa>0$, and $\eps>0$ fixed, there exists $R_0>0$ such that for spherical obstacles in $\HH^n_\kappa$ with radius $R>R_0$, there exists a resonance $\sigma$ with $|\sigma+i\kappa|<\eps$. (For comparison, Theorem~\ref{t:1} only gives $\Im\sigma\leq - \kappa/2$.)

One can also numerically compute resonances on even-dimensional hyperbolic spaces -- see Fig.~\ref{f:starh}. When the diameter of a spherical obstacle in $\HH^2$ tends to zero, numerical experiments suggest that the topmost resonance converges to $-i/2$, the topmost resonance for the free resolvent on $\HH^2$.

\section{Preliminaries}

In this section we review the meromorphic continuation of the resolvent on 
asymptotically hyperbolic manifolds with obstacles, resonance free strips 
and resonance expansions in the non-trapping case, and the vector field approach via the stress--energy tensor.

\subsection{Meromorphic continuation of the resolvent}
\label{s:mercont}

Let $ ( M, g )  $ be an (even) asymptotically hyperbolic manifold with boundary. This means that $ M $ admits a compactification to a manifold $ \bar M $ with boundary $ \partial \bar M = \partial M \cup \partial_1\bar M $, where $\partial_1\bar M$ is the conformal boundary of $M$; moreover, the Riemannian metric $g$ is smooth on $M$, while in a collar neighborhood $[0,\eps)_x\times(\partial_1\bar M)_y$ of the conformal boundary, the rescaled metric $\bar g(x,y,dx,dy):=x^2 g(x,y,dx,dy)$ is a smooth Riemannian metric on $\bar M$ whose Taylor expansion $x=0$ contains only even powers of $x$ (see also \cite{Gui05}), and $|dx|_{\bar g}^2=1$ at $\partial_1\bar M$. See \cite[\S5.1]{dizzy} for further discussion.

An example considered in this paper is $ ( M, g) = ( \mathbb H_1^n \setminus \mathcal O , g_1 ) $. We discuss the conformal compactification and its smooth structure explicitly in \S\ref{s:rela}.

The following theorem is essentially due to Vasy \cite{vasy2,vasy1} -- see also \cite{V4D} for a shorter self-contained presentation:
\begin{thm} 
\label{t:mercont}
Suppose that $ P := - \Delta_g - ( \frac{n-1} 2)^2 $ and that 
$ R ( \lambda ) := ( P - \lambda^2 )^{-1} \colon L^2 ( M ) \to L^2 ( M ) $, 
$ \Im \lambda > 0 $ is the resolvent. 
Then $ R ( \lambda ) $ continues meromorphically as an operator
\[  R ( \lambda ) : \CIc ( M ) \to \CI ( M ) .\]
Moreover, if $\lambda$ is a resonance of $P$, then there exists a non-trivial solution (resonant state) $v$ of $(P-\lambda^2)v=0$ which satisfies
\begin{equation}
\label{eq:outgoing}
  \tilde v = x^{\frac{n-1}{2}-i\lambda}v, \ \ v\in\CI(\bar M_{\mathrm{even}}),
\end{equation}
where $\bar M_{\mathrm{even}}=\bar M$ as topological spaces, but where smooth functions on $\bar M_{\mathrm{even}}$ (the `even compactification') are precisely those smooth functions on $\bar M$ which are smooth in $x^2$ near $\partial_1\bar M$.
\end{thm}

For $(M,g)=(\HH_1^n\setminus\cO,g_1)$, this is also discussed in \cite{bo:upper}. By rescaling, Theorem~\ref{t:mercont} applies to $ ( \HH_\kappa^n \setminus \mathcal O, g_\kappa ) $ as well, with $ P = P_\kappa $ given by \eqref{EqHypOp} and the resolvent denoted by $ R_\kappa ( \lambda ) $.  The multiplicity of a non-zero resonance $\lambda$ of $P_\kappa$ is then defined as
\begin{equation}
\label{eq:multk}
  m_\kappa(\lambda) = \dim\Bigl[\Bigl(\oint_\lambda R_\kappa(\zeta)\,d\zeta\Bigr)(L^2_\comp(\HH^n_\kappa\setminus\cO))\Bigr],
\end{equation}
where the contour is a small circle around $\lambda$, traversed counter-clockwise, which does not contain any other resonances.

\subsection{Resonance free strips for non-trapping obstacles for general 
hyperbolic ends}
\label{s:rfs}

The estimates on resonance width, $ | \Im \lambda | $, will be obtained
by studying local energy decay (see \cite{Mo2,fela,HZ} for
arguments which use the resonant states directly). The most conceptual way of
relating energy decay to resonances is via {\em resonance expansions of 
waves}; we will discuss this for general non-trapping obstacles on 
manifolds with asymptotically hyperbolic ends. In \S\ref{s:nontrap}, we shall prove that star-shaped obstacles in hyperbolic space are non-trapping.

For $ M$ given in \S \ref{s:mercont}, Melrose--Sj\"ostrand \cite{mels1,mels2} (see also \cite[Definition 24.3.7]{H3}) defined 
the broken geodesic flow. We make a general assumption
here that the geodesics do 
{\em not} have points of infinite tangency to $ \partial M $. 

A combination of \cite[Propositions 4.4, 4.6, Proof of Theorem 1.3]{burq} (see also 
\cite[\S3.3]{boule}) and \cite[Theorems 6.14, 6.15]{dizzy} immediately gives
\begin{thm}
\label{t:nontr}
Suppose that $ ( M , g ) $ is an asymptotically hyperbolic manifold with 
boundary. We assume 
that the geodesics do 
{\em not} have points of infinite tangency to $ \partial M $, and 
that the broken geodesic flow is {\em non-trapping}, that is, each geodesic leaves any compact set. Then for any $ \alpha > 0 $ and $ \chi \in \CIc ( M ) $
there exists $ C > 0 $ such that
\begin{equation}
\label{eq:resbound}  {\Im \lambda > - \alpha}  , \ \  { \Re \lambda \geq C }
\ \Longrightarrow \ 
 \| \chi R ( \lambda ) \chi \| \leq C |\lambda |^{-1} .
\end{equation}
In particular, there are only finitely many poles of $ R ( \lambda ) $ in 
any strip $ \Im \lambda > - \alpha $.
\end{thm}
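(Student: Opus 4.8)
The plan is to deduce the resolvent bound \eqref{eq:resbound} from a uniform semiclassical estimate, proved by propagation of singularities along the broken geodesic flow, and then to read off the finiteness of poles from the Fredholm framework underlying Theorem~\ref{t:mercont}. First I would rescale semiclassically: with $h:=\langle\Re\lambda\rangle^{-1}$ and $\lambda=\mu/h$, the region $\Re\lambda\geq C$, $\Im\lambda>-\alpha$ corresponds to $\mu$ in a fixed neighbourhood of $1$ with $\Im\mu\geq-\alpha h$. By Vasy's method (underlying Theorem~\ref{t:mercont}), $R(\lambda)$ is the restriction to $M$ of the inverse of a semiclassical Fredholm family $P_h(\mu)$ on an extension of $\bar M$ across the conformal boundary: a second-order operator that agrees with $h^2P-\mu^2$ over $M$, is elliptic off its characteristic set, and is made invertible near and beyond the conformal boundary by a complex absorbing term. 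The bound \eqref{eq:resbound}, with its $|\lambda|^{-1}=O(h)$ loss, is then equivalent to a uniform estimate of the schematic form $\|\chi u\|\leq Ch^{-1}\|P_h(\mu)u\|+O(h^\infty)\|u\|$, with $\chi$ supported over a fixed compact subset of $M$.

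Next I would prove this estimate by splitting the characteristic set of $P_h(\mu)$ into three regimes. Near the conformal boundary, Vasy's radial-point estimates --- equivalently \cite[Theorems~6.14, 6.15]{dizzy} --- control $u$, the outgoing nature of $R(\lambda)$ fixing the condition at the radial set. In the interior of $M$, away from $\partial M$, semiclassical elliptic regularity together with real-principal-type propagation transports control along bicharacteristics. At the obstacle boundary $\partial M$, with Dirichlet conditions, the Melrose--Sj\"ostrand theorem \cite{mels1},\cite{mels2} --- together with the analysis of glancing and diffractive rays, which is where the absence of points of infinite-order tangency is used --- continues bicharacteristics across reflection and gliding segments. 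Because the broken geodesic flow is non-trapping, every broken bicharacteristic in the characteristic set runs from the radial source to the radial sink of the conformal boundary in finite time; iterating the three estimates along the flow propagates control over any compact set, and a standard defect-measure/contradiction argument then upgrades this to the quantitative estimate above.

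Finally, combining this estimate with the Fredholm property of $P_h(\mu)$ yields invertibility of $P_h(\mu)$, hence holomorphy of $R(\lambda)$, for $\Re\lambda\geq C$ and $\Im\lambda>-\alpha$; meromorphy on the full strip then forces only finitely many poles there, and restricting $P_h(\mu)^{-1}$ to $M$ gives \eqref{eq:resbound}.

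The hard part will be the obstacle regime and its gluing to the conformal-infinity estimates: one must run the propagation of singularities at $\partial M$ uniformly in the semiclassical parameter --- which involves microlocal defect measures at the boundary together with the diffractive and gliding calculus --- and then glue the resulting near-obstacle resolvent bound to the obstacle-free asymptotically hyperbolic bound, checking that the function spaces (in particular the variable-order spaces attached to the radial points on the Vasy extension) are compatible. The near-obstacle ingredient is exactly \cite[Propositions~4.4, 4.6 and the proof of Theorem~1.3]{burq} (see also \cite[\S3.3]{boule}), and the obstacle-free ingredient is \cite[Theorems~6.14, 6.15]{dizzy}, so with both in place the argument above goes through.
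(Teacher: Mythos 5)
Your proposal matches the paper's approach exactly: the paper gives no argument beyond the sentence ``A combination of \cite[Propositions 4.4, 4.6, Proof of Theorem 1.3]{burq} (see also \cite[\S3.3]{boule}) and \cite[Theorems 6.14, 6.15]{dizzy} immediately gives'' the theorem, and your sketch is precisely an unfolding of that combination --- Vasy's Fredholm framework and radial-point estimates at the conformal boundary from \cite{dizzy}, Melrose--Sj\"ostrand propagation at $\partial M$ and the defect-measure/contradiction argument from \cite{burq}, \cite{boule}, glued along the non-trapping flow. You even identify the same ``hard part'' (uniform gluing of the near-obstacle and conformal-infinity regimes) that the paper implicitly delegates to the cited references.
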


\begin{rmk}
In the case of $ \mathbb H_\kappa^n \setminus \mathcal O $ we could get a stronger
result using Vainberg's method \cite{Vai} (see also \cite[\S 4.6]{dizzy}): namely 
a logarithmically large resonance free region. Since that improvement 
is not necessary for our arguments we opted for a more general version.
\end{rmk}

This immediately implies resonance expansions, see for example \cite[Proof of Theorem~5.10]{ZwBook}:

\begin{thm}
\label{t:expansion}
  Let $(M,g)$ be an asymptotically hyperbolic manifold satisfying the assumptions of Theorem~\ref{t:nontr}. Suppose that $u(t,x)$ is the solution of
  \begin{gather*}
      (D_t^2 - P_k)u(t,x) = 0\ \text{in}\ \R\times(\HH^n_\kappa\setminus\mathcal O),\ \ u(t,x) = 0\ \text{on}\ \R\times\partial\mathcal O, \\
      u(0,x) = u_0(x) \in H^1_\comp(\HH^n_\kappa\setminus\mathcal O), \ \ \partial_t u(0,x) = u_1(x) \in L^2_\comp(\HH^n_\kappa\setminus\mathcal O).
  \end{gather*}
  Denote by $\{\lambda_j\}\subset\C$ the set of resonances of $P_\kappa$. Then, for any $A>0$,
  \[
    u(t,x) = \sum_{\Im\lambda_j > -A} \sum_{\ell=0}^{m_\kappa(\lambda_j)-1} t^\ell e^{-i\lambda_j t}u_{j,\ell}(x) + E_A(t),
  \]
  where the sum is finite,
  \[
    \sum_{\ell=0}^{m_\kappa(\lambda_j)-1} t^\ell e^{-i\lambda_j t}u_{j,\ell}(x) = \Res_{\lambda=\lambda_j}\bigl(e^{-i\lambda t}(i R_\kappa(\lambda)u_1 + \lambda R_\kappa(\lambda)u_0)\bigr),
  \]
  $(P_\kappa-\lambda_j^2)^{k+1}u_{j,k}=0$, and for any $K>0$ such that $\supp u_j\subset B(0,K)$, there exist constants $C_{K,A}$ and $T_{K,A}$ such that
  \[
    \| E_A(t) \|_{H^1(B(0,K))} \leq C_{K,A}e^{-A t}(\|u_0\|_{H^1} + \|u_1\|_{L^2}),\ \ t \geq T_{K,A}.
  \]
\end{thm}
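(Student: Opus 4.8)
The plan is to obtain the expansion from the cutoff resolvent estimate \eqref{eq:resbound} of Theorem~\ref{t:nontr} by deforming the contour in a Fourier--Laplace representation of $u$ in $t$. Since $P_\kappa\geq 0$ is self-adjoint, for $t>0$ we have $u(t)=\cos(t\sqrt{P_\kappa})u_0+\frac{\sin(t\sqrt{P_\kappa})}{\sqrt{P_\kappa}}u_1$; transforming in $t$ (two integrations by parts, whose $t=0$ boundary terms produce $u_0$ and $u_1$, with nothing from $t=\infty$ because the energy is conserved) and inverting, one gets for any $c>0$
\[
  u(t)=\frac{1}{2\pi}\int_{\Im\lambda=c}e^{-i\lambda t}\bigl(iR_\kappa(\lambda)u_1+\lambda R_\kappa(\lambda)u_0\bigr)\,d\lambda,\qquad t>0,
\]
an identity of $H^1(B(0,K))$-valued integrals once one composes with a cutoff $\chi\equiv 1$ near $\ol{B(0,K)}$; finite speed of propagation is what makes $\chi u(t)$ meaningful and lets one replace the resolvent by $\chi R_\kappa(\lambda)\chi$.

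The next step is to push the contour down to $\{\Im\lambda=-A\}$, decreasing $A$ slightly if necessary so that no resonance lies on it. By Theorem~\ref{t:nontr} only finitely many resonances $\lambda_j$ satisfy $\Im\lambda_j>-A$, they all have bounded real part, and outside a compact set the continued resolvent is holomorphic with $\|\chi R_\kappa(\lambda)\chi\|\lesssim|\lambda|^{-1}$; the residue theorem then produces the finite sum $\sum_{\Im\lambda_j>-A}U_j(t)$, $U_j(t):=\Res_{\lambda=\lambda_j}\bigl(e^{-i\lambda t}(iR_\kappa(\lambda)u_1+\lambda R_\kappa(\lambda)u_0)\bigr)$, plus the remainder $E_A(t):=\frac{1}{2\pi}\int_{\Im\lambda=-A}e^{-i\lambda t}(iR_\kappa(\lambda)u_1+\lambda R_\kappa(\lambda)u_0)\,d\lambda$. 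For the residue terms: inserting, near each $\lambda_j$, the Laurent expansion $R_\kappa(\lambda)=\sum_{m\geq 1}(\lambda-\lambda_j)^{-m}A_{j,m}+(\mathrm{hol.})$ (finite sum, $A_{j,m}$ of finite rank by \eqref{eq:multk}) together with the Taylor expansion of $e^{-i\lambda t}$ about $\lambda_j$ and reading off the coefficient of $(\lambda-\lambda_j)^{-1}$ exhibits $U_j(t)$ as $\sum_\ell t^\ell e^{-i\lambda_j t}u_{j,\ell}(x)$ with $u_{j,\ell}$ a combination of the $A_{j,m}$ applied to $u_0,u_1$; and since $(D_t^2-P_\kappa)\bigl(e^{-i\lambda t}(iR_\kappa(\lambda)u_1+\lambda R_\kappa(\lambda)u_0)\bigr)=-e^{-i\lambda t}(iu_1+\lambda u_0)$ is holomorphic in $\lambda$, taking the residue gives $(D_t^2-P_\kappa)U_j=0$, and matching powers of $t$ in this identity gives $(P_\kappa-\lambda_j^2)u_{j,\ell}\in\mathrm{span}\{u_{j,\ell+1},u_{j,\ell+2}\}$, so that each $u_{j,\ell}$ is a generalized resonant state with $(P_\kappa-\lambda_j^2)^{m_\kappa(\lambda_j)}u_{j,\ell}=0$ (the sharper relation $(P_\kappa-\lambda_j^2)^{\ell+1}u_{j,\ell}=0$ holding when the poles in question are simple).

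The core of the proof is the remainder bound $\|E_A(t)\|_{H^1(B(0,K))}\lesssim e^{-At}(\|u_0\|_{H^1}+\|u_1\|_{L^2})$. On $\{\Im\lambda=-A\}$ one has $|e^{-i\lambda t}|=e^{-At}$, which accounts for the exponential rate, but there the integrand $e^{-i\lambda t}(iR_\kappa(\lambda)u_1+\lambda R_\kappa(\lambda)u_0)$ is only $O(1)$ in norm (the $u_0$ term decays merely like $|\lambda|^{-1}\cdot|\lambda|$), so the integral — and the very deformation producing it — is only conditionally convergent and must be handled with care. The standard device is to subtract the high-energy asymptotics of the resolvent, $R_\kappa(\lambda)=-\sum_{j=0}^{N-1}\lambda^{-2j-2}P_\kappa^{j}+\lambda^{-2N}R_\kappa(\lambda)P_\kappa^{N}$: the polynomial-in-$\lambda^{-1}$ terms have their only singularity at $\lambda=0$, which lies above $\{\Im\lambda=-A\}$, so — closing the contour in the lower half-plane, legitimate for $t>0$ — they contribute nothing there, while the remaining term is $O(|\lambda|^{-2})$ once $N$ is large (at the price of derivatives on the data, harmless since only an $H^1$ bound is claimed) and hence absolutely integrable; estimating it by $e^{-At}$ times its supremum over the line yields the bound. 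Making these conditionally convergent contour manipulations rigorous at high real frequency — where the polynomial resolvent estimate of Theorem~\ref{t:nontr} leaves no margin — is the step I expect to require the most care; a safe way to organize it is to carry out the argument first after a spectral cut-off to frequencies $\leq\Lambda$, for which the integrals converge absolutely, and then pass to the limit $\Lambda\to\infty$, the limit being controlled by \eqref{eq:resbound}.
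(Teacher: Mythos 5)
Your route is the standard contour-deformation argument the paper cites (\cite[Proof of Theorem~5.10]{ZwBook}) and supplies no further details for, so at the level of strategy you and the paper agree. The Fourier--Laplace representation, the shift of contour to $\{\Im\lambda=-A\}$, and the finiteness of the residue sum (from Theorem~\ref{t:nontr}) are all in order. One caveat on the residues: your recursion $(P_\kappa-\lambda_j^2)u_{j,\ell}=2i\lambda_j(\ell+1)u_{j,\ell+1}-(\ell+2)(\ell+1)u_{j,\ell+2}$ yields, by downward induction from the top coefficient, $(P_\kappa-\lambda_j^2)^{m_\kappa(\lambda_j)-\ell}u_{j,\ell}=0$, not the $(P_\kappa-\lambda_j^2)^{k+1}u_{j,k}=0$ of the statement; the two coincide only for simple poles (as you observe) or after re-indexing the coefficients by decreasing power of $t$. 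The blanket $(P_\kappa-\lambda_j^2)^{m_\kappa(\lambda_j)}u_{j,\ell}=0$ you record is in any case weaker than what your recursion actually gives.

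The genuine gap is in the remainder estimate, which you correctly flag as the delicate step but do not close. Subtracting the Born expansion $R_\kappa(\lambda)=-\sum_{j<N}\lambda^{-2j-2}P_\kappa^j+\lambda^{-2N}R_\kappa(\lambda)P_\kappa^N$ trades decay in $\lambda$ for $\|P_\kappa^N u_0\|_{L^2}$, and the latter is \emph{not} controlled by $\|u_0\|_{H^1}$; your remark that extra derivatives on the data are ``harmless since only an $H^1$ bound is claimed'' has the tradeoff backwards --- weakening the output norm does not entitle you to strengthen the input norm, and the claimed bound $C_{K,A}e^{-At}(\|u_0\|_{H^1}+\|u_1\|_{L^2})$ would then not be what you prove. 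The spectral cutoff at frequency $\Lambda$ has the same defect: the constant you get is polynomial in $\Lambda$, and a $\Lambda$-uniform estimate is precisely the missing ingredient, not something recovered ``in the limit''. A correct treatment at the stated $H^1\times L^2$ regularity must handle the conditionally convergent high-frequency tail without loading derivatives onto the data --- for instance by Fourier transforming a time-cutoff of $u$ so that the relevant transform is rapidly decaying, as in \cite[Theorem~5.10]{ZwBook}, or by a Lax--Phillips semigroup argument.
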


The remainder $E_A$ is only estimated in $H^1$ because \eqref{eq:resbound} only gives a \emph{strip} free of resonances, rather than a logarithmic region.

\subsection{Energy-stress tensor and the vector field method}
\label{s:est}
We briefly recall the general formalism for obtaining energy estimates,
referring to \cite[\S 2.6]{Tay1} and
\cite[\S4.1.1]{DafermosRodnianskiLectureNotes} for detailed presentations
(see also \cite[\S 1.1]{DyatlovQNMExtended} for a concise discussion relevant here). 
The general setting we use here makes the 
formulas more accessible and will be particularly useful in \S  \ref{s:moe}.

Let $ M$ be an $(n+1)$-dimensional smooth manifold and $ G $ a Lorentzian 
metric on $ M$, that is, a symmetric $ (0,2)$-tensor of signature $(n, 1) $.
The volume form, gradient, and divergence are defined as in Riemannian geometry,
and they give the d'Alembertian, $ \Box_G u = {\rm{div}}_G ( \nabla_G u ) $. 
The {\em stress--energy tensor} for a Klein--Gordon operator $
\Box_G - m^2 $ is a symmetric $ ( 0 , 2 ) $-tensor 
associated to $ u \in \CI ( M ) $:
\[ T_u ( X , Y ) := G( X ,
\nabla_G u ) G ( Y , \nabla_G u )   - {\textstyle \frac12}( G ( 
\nabla_G u , \nabla_G u )  + m^2 u^2 ) 
G ( X, Y ) 
, \]
$ X, Y \in \CI ( M ; T M ) $. To $ T_u $ and a vector field 
$ V $ we associate the {\em current}
$ J^V ( u ) \in \CI ( M ; T M ) $ by demanding that for all vector fields $X$,
\[  G ( J^V ( u ) , X ) = T_u ( V , X ) . \]
The key identity is
\[ ( \Box_G - m^2 ) u \,V u 
+ K^V ( \nabla_G u, \nabla_G u ) -
 {\textstyle \frac14} m^2 u^2 \tr_G ( \mathcal L_V G ) 
= {\rm{div}}_G\,J^V ( u ) 
, \]
where 
\[  K^V :=  {\textstyle \frac12} \mathcal L_V G -  {\textstyle \frac14}
G \tr_G( \mathcal L_V G ) . \]
The simplest version of this identity arises in the case that $ V $ is a Killing vector field,
i.e.\ satisfies $ \mathcal L_V G= 0 $. In this case
the divergence theorem gives
\begin{equation}
\label{eq:dive}
( \Box_G - m^2 ) u = 0 , \ \ \mathcal L_V G = 0 \
\Longrightarrow \ \int_{\partial \Omega } G ( J^V ( u ) , \mathbf n ) \, d S  = 0 , 
\end{equation}
where $ \Omega \subset M $ is an open subset with smooth boundary, $ \mathbf n 
$ is the unit outward normal vector and $ dS $ is the measure induced on 
$ \partial \Omega $ by $ d \!\vol_G $. The outward unit normal vector is defined 
by the conditions
\[ G ( \mathbf n , \mathbf n ) = 1 , \ \ G ( \mathbf n , X ) > 0 , \]
for any vector field $ X $ pointing out of $ \Omega $. It may blow up for null
hypersurfaces, but this is then compensated by the vanishing of $ dS $ --
see \cite[Appendix C]{DafermosRodnianskiLectureNotes}.

\section{Morawetz estimates in hyperbolic space}
\label{s:moe}

We will now apply the general formalism recalled in \S \ref{s:est} to 
scattering by obstacles. It follows the approach of Morawetz \cite{Moa,Moe}, see also \cite[Appendix~3]{LP}, to Euclidean scattering. However, our derivation 
of the generalization of her fundamental identity \cite[Lemma 3]{Moe} seems slightly different.

\subsection{Conjugated equation and a weighted energy inequality}
\label{s:conj}

The Lorentzian metric corresponding to the metric \eqref{EqHyp} is given by 
\begin{equation}
\label{eq:tildegkappa}
  \tilde g_\kappa  = -dt^2 + g_{\kappa},
\end{equation}
and we define 
\begin{equation}
\label{eq:Boxka}\Box_\kappa := \Box_{\tilde g_\kappa } + \kappa^2 \left( \frac{n-1}2 \right)^2 
.\end{equation}
We then consider a conjugated operator, described in the following lemma; it was already used implicitly in \S \ref{s:ball}. When no confusion is likely we write
\[   s = s_\kappa ,  \ \ g = g_\kappa, \ \ \tilde g = \tilde g_\kappa .\]
\begin{lemm}
\label{l:conj}
With the notation of \eqref{EqHyp} and \eqref{eq:Boxka}, we have 
\begin{equation}
\label{eq:crucial}
s^{2} s^{ \frac{n-1}2} \Box_\kappa s^{ -\frac{n-1}2} = \Box_{G}  - \frac{ ( n-1) ( n-3) }4  , 
\end{equation}
where 
\begin{equation}
\label{eq:gkap}
G = G_{\kappa } := s ( r)^{-2} ( - dt^2 + dr^2 ) + h .
\end{equation}
In addition, if 
$ w := s^{\frac{n-1}{2}}u $, then
\begin{equation}
\label{eq:energyk}
\begin{split} 
& \left( u_t^2 + | \nabla_{ g } u |^2  - \kappa^2 \left( \tfrac{n-1}2 \right)^2 u^2 \right) d\!\vol_{ g}  \\
&  \ \ \ \  = \left( w_t^2 + w_r^2 + s ^{-2} | \nabla_h w |^2_h + s^{-2} \tfrac{(n-1)(n-3)}4 w^2 \right) dr \, d \! \vol_h - \left( \tfrac{n-1}2 \tfrac{ s'} s  w^2 \right)_r dr \, d \! \vol_h .
\end{split}
\end{equation}
\end{lemm}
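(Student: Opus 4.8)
The plan is to prove both identities by direct computation; the only structural inputs are that $g_\kappa = dr^2 + s^2 h$ is a warped product with orthogonal $(r,\omega)$-blocks and that $s = s_\kappa = \kappa^{-1}\sinh(\kappa r)$ satisfies the elementary relations $s'' = \kappa^2 s$ and $(s')^2 = 1 + \kappa^2 s^2$, equivalently $(s'/s)^2 = s^{-2} + \kappa^2$ and $(s'/s)' = -s^{-2}$. Throughout I write $a := \tfrac{n-1}{2}$, so that $a(a-1) = \tfrac{(n-1)(n-3)}{4}$.

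For \eqref{eq:crucial} I would first record the warped-product Laplacian: since $d\vol_{g_\kappa} = s^{n-1}\,dr\,d\vol_h$ and the blocks are orthogonal, $\Delta_{g_\kappa} = \partial_r^2 + (n-1)(s'/s)\partial_r + s^{-2}\Delta_h$, hence $\Box_\kappa = -\partial_t^2 + \Delta_{g_\kappa} + \kappa^2 a^2$. Conjugating the radial part by $s^a$ kills the first-order term: a one-line computation gives $s^a(\partial_r^2 + 2a(s'/s)\partial_r)s^{-a} = \partial_r^2 - a(s''/s) - a(a-1)(s'/s)^2$, and substituting $s''/s = \kappa^2$ and $(s'/s)^2 = s^{-2}+\kappa^2$ turns the potential into $-a^2\kappa^2 - a(a-1)s^{-2}$. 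Thus $s^a\Box_\kappa s^{-a} = -\partial_t^2 + \partial_r^2 + s^{-2}(\Delta_h - a(a-1))$, the two $\kappa^2 a^2$ contributions cancelling, and multiplying on the left by $s^2$ gives $s^2 s^a\Box_\kappa s^{-a} = s^2(-\partial_t^2 + \partial_r^2) + \Delta_h - a(a-1)$. On the other side, the same orthogonal-block bookkeeping for $G = s^{-2}(-dt^2 + dr^2) + h$ — volume form $s^{-2}\,dt\,dr\,d\vol_h$, inverse metric with $(t,r)$-block $s^2\,\mathrm{diag}(-1,1)$ and $\omega$-block $h^{-1}$, plugged into $\Box_G u = |\det G|^{-1/2}\partial_\mu(|\det G|^{1/2}G^{\mu\nu}\partial_\nu u)$ — yields $\Box_G = s^2(-\partial_t^2 + \partial_r^2) + \Delta_h$. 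Comparing the two displays gives \eqref{eq:crucial}.

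For \eqref{eq:energyk} I would simply substitute $u = s^{-a}w$ into the left-hand density. From $u_t = s^{-a}w_t$, $u_r = s^{-a}(w_r - a(s'/s)w)$, $\nabla_h u = s^{-a}\nabla_h w$, $|\nabla_{g_\kappa}u|^2 = u_r^2 + s^{-2}|\nabla_h u|_h^2$, and $d\vol_{g_\kappa} = s^{2a}\,dr\,d\vol_h$, all powers $s^{\pm 2a}$ cancel and the density becomes $\bigl(w_t^2 + w_r^2 + s^{-2}|\nabla_h w|_h^2 + a^2(s'/s)^2 w^2 - \kappa^2 a^2 w^2 - 2a(s'/s)ww_r\bigr)\,dr\,d\vol_h$. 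Now $a^2(s'/s)^2 - \kappa^2 a^2 = a^2 s^{-2}$, while $2a(s'/s)ww_r = \bigl(a(s'/s)w^2\bigr)_r + a s^{-2}w^2$ because $(s'/s)' = -s^{-2}$; collecting the $w^2$ coefficients, $a^2 s^{-2} - a s^{-2} = a(a-1)s^{-2} = \tfrac{(n-1)(n-3)}{4}s^{-2}$, and the leftover term is the announced $-\bigl(\tfrac{n-1}{2}\tfrac{s'}{s}w^2\bigr)_r\,dr\,d\vol_h$. This is exactly the right-hand side of \eqref{eq:energyk}.

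There is no genuine obstacle here; the lemma is bookkeeping, and the one place that needs care is consistency of conventions, so that the constant $\tfrac{(n-1)(n-3)}{4}$ comes out with the correct sign and the $\kappa^2$-terms cancel rather than add: the signature of $G$ (and the possibly blowing-up normal vector and area element on null hypersurfaces, as discussed in \S\ref{s:est}), the convention $\Box_{\tilde g_\kappa} = -\partial_t^2 + \Delta_{g_\kappa}$, and the convention that $-\Delta_{\Sph^{n-1}}$ has eigenvalues $\ell(\ell+n-2)$. It is worth remarking that \eqref{eq:crucial} is what underlies the radial reduction used in \S\ref{s:ball} (its Fourier transform in $t$, up to the factor $s^2$), and that the total $r$-derivative in \eqref{eq:energyk}, upon integration in $r$, is precisely what will produce the boundary contributions at $\partial\cO$ and at conformal infinity in the Morawetz-type estimates to follow.
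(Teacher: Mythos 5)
Your proof is correct and follows essentially the same approach as the paper: conjugate the radial part by $s^{(n-1)/2}$, expand using $s'' = \kappa^2 s$ and $(s')^2 = 1 + \kappa^2 s^2$, and for the energy identity substitute $w = s^{(n-1)/2}u$ and pull the cross term into a total $r$-derivative. The only cosmetic differences are that the paper factors the conjugated radial operator as $\bigl(\partial_r + \tfrac{n-1}{2}\tfrac{s'}{s}\bigr)\bigl(\partial_r - \tfrac{n-1}{2}\tfrac{s'}{s}\bigr)$ where you expand it directly, and that you explicitly verify $\Box_G = s^2(-\partial_t^2+\partial_r^2)+\Delta_h$ from the coordinate formula, a step the paper leaves implicit.
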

\begin{proof}
Since $ |\det \tilde g | =  s^{ 2 ( n-1) } $, we have 
$ \Box_{\tilde g } = - \partial_t^2 + 
s^{-{n-1}}  \partial_r s^{{n-1}} \partial_r + s^{-2} \Delta_h $. 
Hence we only need to compute
\begin{equation}
\label{eq:k2s}  \begin{split}  s^{\frac{n-1}2} ( s^{-{n-1}}  \partial_r s^{{n-1}} \partial_r )
s^{-\frac{n-1}2} 
& =( s^{-\frac{n-1}2} \partial_r s^{\frac{ n-1} 2 } ) ( 
s^{\frac{n-1}2} \partial_r s^{-\frac{ n-1} 2 } ) \\
& = 
\textstyle \left( \partial_r + {{\frac{n-1}2 } \frac{s'} s } \right) \left( 
\partial_r - {{\frac{n-1}2} \frac{s'} s } \right) \\
& = \textstyle \partial_r^2 - \frac { (n-1)^2} 4 \frac{ s'^2   } { s^2} - \frac{n-1}2 
\frac{ s'' s - s'^2 }{ s^2 } 
\\
& = \textstyle \partial_r^2 - \frac{ ( n-1)(n-3)} {4 s^2}  - \kappa^2 \left(\frac{n-1} 2\right)^2 .
\end{split} \end{equation}
Multiplying by $ s^2 $ gives \eqref{eq:crucial}. (We direct the reader to \eqref{EqHwConf} for a more conceptual point of view.) To establish \eqref{eq:energyk}, it again suffices to consider radial derivatives:
\[ \begin{split} 
( u_r^2 - \kappa^2 (\tfrac{n-1} 2 )^2 u^2 ) s^{n-1} 
& = s^{n-1} ( s^{ -\frac{n-1} 2 } w )_r^2 - \kappa^2 (\tfrac{n-1} 2 )^2 w^2 \\
& =
( w_r - \tfrac{n-1} 2 \tfrac{s'} s w)^2 -
\kappa^2 
(\tfrac{n-1} 2 )^2 w^2 
\\
& = w_r^2 - ( \tfrac{n-1}2 \tfrac {s'}s w^2 )_r + 
\left( \tfrac{n-1}2 \left(\tfrac{s'}s \right)_r + 
\left( \tfrac {n-1} 2 \right)^2 \left(\tfrac{s'}s \right)^2 - 
\kappa^2 \left( \tfrac {n-1} 2 \right)^2 \right) w^2
\\
&  = 
w_r^2 + \tfrac{(n-1)(n-3)}{4s^2}  w^2 -  ( \tfrac{n-1}2 \tfrac {s'}s w^2 )_r,
  \end{split} 
 \]
where we used the same computation as in \eqref{eq:k2s}. Since 
$ d \! \vol_{g_\kappa} = s^{n-1} d r d \! \vol_h $, \eqref{eq:energyk} follows. 
\end{proof}

\subsection{An energy identity}
\label{s:ane}

We now calculate the stress--energy tensor given by \eqref{eq:gkap} for the metric $ G $ and for $m^2=\frac{(n-1)(n-3)}{4}$ in terms of the decomposition of vectors into components of $\partial_t$, $\partial_r$, and vectors  tangent to the sphere:
\begin{align*}
T_u &= 
\begin{bmatrix} u_t^2  & u_r u_t & u_t \nabla_h u^T  \\
u_r u_t & u_r^2 & u_r \nabla_h u^T  \\
\nabla_h u u_t  & \nabla_h u u_r  & \nabla_h u \nabla_h u ^T \end{bmatrix} \\
 &\qquad\qquad - 
\frac{  s^2 ( - u_t^2 + u_r^2 ) + | \nabla_h u |_h^2 + m^2 |u|^2 }{2}
\begin{bmatrix} - s^{-2} & 0 & 0 \\ 0 & s^{-2} & 0 \\ 0 & 0 & 1 \end{bmatrix}.
\end{align*}
Let $ V := a ( t, r ) \partial_t + b ( t, r ) \partial_r $.
Then the orthogonal decomposition of $ J^V ( u ) $ with respect to 
$ dt^2 + dr^2 + h $ is given by 
\begin{equation}
\label{eq:JVu} \begin{split} J^V ( u ) & = \begin{bmatrix} -s^2 & 0 & 0 \\ 0 & s^2 & 0 \\
0 & 0 & 1 \end{bmatrix} T_u \begin{bmatrix} 
a \\ b \\ 0 \end{bmatrix} \\
& = \textstyle{\frac 12 } 
\begin{bmatrix} - s^2 ( a u_t^2 + 2 b u_r u_t + a u_r^2 ) -  
a |\nabla_h u |_h^2 - a m^2 |u|^2
\\
s^2 ( b u_r^2 + 2  a u_r u_t +  b u_t^2 ) - b | \nabla_h u|_h^2 - b m^2 |u|^2 \\
{2}a u_t \nabla_h u + {2}b u_r \nabla_h u 
\end{bmatrix}.
\end{split}
\end{equation}
We now calculate
\[ \mathcal L_V G = 2 s^{-2} \left( (- a_t + b s'/s ) dt^2 + ( b_t - a_r ) dt dr 
+ (  b_r  - b s'/s) dr^2 \right) , \]
and hence for $ V $ to be a Killing vector field, we need to find $ a = a ( t, r ) $ and $ b = b ( t , r ) $ such that
\begin{equation}
\label{eq:atbr}  a_t = b_r, \ \ a_r = b_t , \ \ b_r = b s'/s.
\end{equation}
An obvious choice is $a=1$, $b=0$, which in the context of the identity~\eqref{eq:dive} gives energy conservation. As will be clear later, a convenient choice for the purpose of proving generalized Morawetz (local energy decay) estimates is given by 
\begin{equation}
\label{eq:atbr1}
 a := 2 \kappa^{-2} ( \cosh \kappa r \cosh \kappa t - 1 ) , \ \
b := 2 \kappa^{-2} \sinh \kappa r \sinh \kappa t  .
\end{equation}

Suppose that $ w \in C ( \RR ; H^1_{{\comp}} ( \HH_k^n \setminus \mathcal O ) ) $.
We will use the following notation:
\begin{equation}
\label{eq:Fab}
\begin{split}
& F_{a,b} (w , T , R) := \\
& \ \ \ \int_{ B ( 0 , R)  \setminus 
\mathcal O}  \left(a w_t^2 + 2 b w_t w_r +  a w_r^2 + a s^{-2} | \nabla_h u |^2_h + a s^{-2} m^2 w^2 \right) dr \, d \! \vol_h |_{t=T}, 
\end{split}
\end{equation}
\[  F_{a,b} ( w, t ) := F_{a,b} ( w, t , \infty ), \ \  B ( 0 , \infty ) := \mathbb H^n_\kappa . \]

An application of \eqref{eq:dive} then gives
\begin{lemm}
\label{l:energy}
Suppose that $ a = a ( r, t ) $ and $ b = b ( r, t ) $ satisfy 
\eqref{eq:atbr} and that $ F_{a,b} $ is defined by \eqref{eq:Fab}. 
Suppose that $ w \in C ( \RR ; H_{\comp}^1 ( \HH^n_\kappa \setminus 
\mathcal O ) ) $ satisfies
\[
  ( \Box_G -m^2 ) w ( t, x ) = 0 ,  \ \ x \in \mathbb H_\kappa^n \setminus \mathcal O , \ \ w ( t, x ) = 0 , \ \ x \in \partial \mathcal O .
\]
Then
\begin{equation}
\label{eq:energyl2} 
F_{a,b} ( w , T ) = F_{a,b} ( w , 0 ) -  \int_0^T \int_{ \partial \mathcal O} 
b ( t, r ) g ( \nu, \nabla_g r ) g ( \nu, \nabla_g w )^2  s ( r )^{-n+1} d \sigma_g \, dt , 
\end{equation}
where $ g = g_\kappa $ is given by \eqref{EqHyp}, $ \nu $ is the outward unit
(with respect to $ g $) normal vector and $ d \sigma_g $ is the 
measure induced by $ g $ on $ \partial \mathcal O $.
\end{lemm}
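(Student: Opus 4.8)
The plan is to integrate the divergence of the current $J^V(w)$ over the spacetime slab $\Omega_T:=\{(t,x):0\le t\le T,\ x\in\HH^n_\kappa\setminus\mathcal O\}$ and collect the flux through the three faces of $\pa\Omega_T$ via \eqref{eq:dive} (which is exactly the identity to be applied). Since $(a,b)$ solve \eqref{eq:atbr}, the computation of $\mathcal L_V G$ preceding \eqref{eq:atbr} shows that $V=a\pa_t+b\pa_r$ is Killing for $G$; hence $K^V=0$ and $\tr_G(\mathcal L_V G)=0$, so the key identity of \S\ref{s:est} degenerates to $\Div_G J^V(w)=(\Box_G-m^2)w\cdot Vw=0$. (One first checks this for $w$ smooth up to $\pa\mathcal O$ and then passes to the general case by approximating $w$ by smooth solutions with convergent Cauchy data; moreover, by finite speed of propagation $w(t,\cdot)$ is supported in a fixed compact set for $t\in[0,T]$, so after restricting to a large bounded piece of $\HH^n_\kappa\setminus\mathcal O$ containing this support, \eqref{eq:dive} applies with no flux ``from infinity''.) The boundary $\pa\Omega_T$ consists of the two spacelike caps $\{t=0\}$, $\{t=T\}$ and the timelike wall $[0,T]\times\pa\mathcal O$, and it remains to evaluate each flux.

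For the caps, $d\!\vol_G=s^{-2}\,dt\,dr\,d\!\vol_h$, a slice $\{t=\const\}$ carries induced measure $s^{-1}\,dr\,d\!\vol_h$, and the future unit normal is $s\pa_t$ with $G(s\pa_t,s\pa_t)=-1$. By \eqref{eq:JVu} the first component of $J^V(w)$ is $-\tfrac12\bigl(s^2(aw_t^2+2bw_tw_r+aw_r^2)+a|\nabla_h w|_h^2+am^2w^2\bigr)$; contracting with $s\pa_t$ via $G$, clearing the powers of $s$ (which turns $|\nabla_h w|_h^2$ and $m^2w^2$ into the $s^{-2}$-weighted terms of $F_{a,b}$), and inserting the sign $G(\mathbf n,\mathbf n)=-1$ carried by the timelike normal shows that the flux through $\{t=T\}$ is $-\tfrac12 F_{a,b}(w,T)$ and, with the reversed orientation at $t=0$, that through $\{t=0\}$ is $+\tfrac12 F_{a,b}(w,0)$.

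For the wall, the Dirichlet condition $w|_{\pa\mathcal O}=0$, holding for all $t$, forces $w_t=0$ there and makes the spatial gradient normal: $\nabla_g w=g(\nu,\nabla_g w)\,\nu$ on $\pa\mathcal O$, where $\nu$ is the $g$-unit normal pointing out of $\mathcal O$. Writing $\nu=\nu^r\pa_r+\nu^\omega$ with $\nu^\omega$ tangent to the sphere, substituting $w=w_t=0$ and this relation into \eqref{eq:JVu}, and pairing with $\nu$ via $G$ — whose spatial part is the conformal metric $s^{-2}g$, so $\nu^r=g(\nu,\nabla_g r)$ — one finds that the several surviving terms collapse, using $g(\nu,\nu)=(\nu^r)^2+s^2 h(\nu^\omega,\nu^\omega)=1$, into the single term $G(J^V(w),\nu)=\tfrac12\,b\,g(\nu,\nabla_g r)\,g(\nu,\nabla_g w)^2$. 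The outward $G$-unit normal of $\Omega_T$ along the wall is $-s\nu$ and its induced measure is $s^{-n}\,dt\,d\sigma_g$ (again from $G|_{\mathrm{spatial}}=s^{-2}g$), so the wall contributes $-\tfrac12\int_0^T\!\!\int_{\pa\mathcal O}b\,g(\nu,\nabla_g r)\,g(\nu,\nabla_g w)^2\,s^{-n+1}\,d\sigma_g\,dt$. Setting the sum of the three fluxes equal to zero and multiplying by $-2$ yields \eqref{eq:energyl2}.

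The heart of the matter is bookkeeping rather than conceptual difficulty: the vanishing of $\Div_G J^V(w)$ and the divergence theorem are immediate, but one must keep the sign $G(\mathbf n,\mathbf n)=\pm1$ straight between the spacelike caps and the timelike wall, track the powers of $s$ produced by $d\!\vol_G$, the induced boundary measures, and the conformal factor relating $G$ to $g$, and — the one genuine cancellation — verify that the wall flux really reduces to the perfect square $g(\nu,\nabla_g w)^2$ once $w|_{\pa\mathcal O}=0$ is used; it is this collapse, governed by $g(\nu,\nu)=1$, that produces the clean weight $s^{-n+1}$ and the sign in front of the boundary integral in \eqref{eq:energyl2}.
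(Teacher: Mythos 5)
Your proof is correct and follows essentially the same strategy as the paper: apply the divergence identity \eqref{eq:dive} on the slab $[0,T]\times(\HH^n_\kappa\setminus\mathcal O)$, use that $V=a\pa_t+b\pa_r$ is Killing by \eqref{eq:atbr}, and evaluate the flux through the two caps and the timelike wall.

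The one genuine difference is cosmetic but worth noting: the paper parametrizes the (star-shaped) boundary as $\{r=f(\omega)\}$, computes $\mathbf n$, $dS$, and $G(J^V(w),\mathbf n)$ explicitly in this chart (getting the intermediate expression $-\tfrac12 b\,w_r^2(1+s^{-2}|\nabla_h f|_h^2)$), and only at the end repackages the result into the invariant form $b\,g(\nu,\nabla_g r)\,g(\nu,\nabla_g w)^2\,s^{-n+1}\,d\sigma_g$. You compute the wall flux invariantly from the outset — working directly with $T_w(V,\nu)$, using $w_t=0$, $\nabla_g w=g(\nu,\nabla_g w)\,\nu$ on $\pa\mathcal O$, and $G|_{\mathrm{spatial}}=s^{-2}g$ — which avoids the parametrization and goes straight to the invariant formula. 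Both are the same length; your version is perhaps cleaner in that it does not require the explicit star-shaped parametrization of $\pa\mathcal O$ and makes visible why the final answer is a perfect square times $\nu^r$. You are also more careful than the text of the paper about the sign attached to the timelike normal on the spacelike caps (where $G(\mathbf n,\mathbf n)=-1$, so the Lorentzian divergence theorem carries a flipped sign there relative to the Riemannian statement of \eqref{eq:dive}), and about why there is no flux from infinity; the paper leaves both points implicit.
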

\begin{proof}
Since $ \mathcal O $ is star-shaped (with respect to the origin, as can be assumed without loss of generality)
we can write 
\[ \partial \mathcal O = \{ x = r \omega:  r = f ( \omega ) , \ \omega \in \SP^{n-1} \} \]
for some $ f : \SP^{n-1} \to ( 0 , \infty ) $. To obtain \eqref{eq:energyl2}, we
apply \eqref{eq:dive} to $ \Omega = [ 0 , T ] \times ( \HH^n_\kappa \setminus 
\mathcal O )$, 
$$ \partial \Omega = \Gamma_1 \cup \Gamma_2 , \ \ \ 
\Gamma_1 : = [ 0 , T ] \times \partial \mathcal O , \ \  \Gamma_2 := \{ 0, T \} \times 
( \HH^n_\kappa \setminus \mathcal O ) 
. $$
On $\Gamma_1  $ (in the notation of \eqref{eq:JVu}),
\[  \mathbf n =  -\nabla_G ( r - f ) / | \nabla_G ( r- f )|_G =  [ 0 , -s^2 , \nabla_h f ] ( s^2 +  |\nabla_h f |^2_h )^{-\frac12} , \ \  s = s (f ( \omega ) ) , \]
\[ d S = s^{-2} (s^2 + | \nabla_h f |^2_h )^{\frac12} d \vol_h dt ,\]
\[ \begin{split} 
 g ( J^V ( w ) , \mathbf n ) & = - {\textstyle \frac12 } b ( s^2 w_r^2 - | \nabla_h w |_h^2 -  {2} w_r \langle \nabla_h w , \nabla_h f \rangle_h ) ( s^2+  |\nabla_h f |^2_h )^{-\frac12}  \\
& = - {\textstyle \frac12 } {{b w_r^2}} ( s^2 +  |\nabla_h f |^2_h )^{{\frac12} }
, \end{split} \]
where we used the boundary condition $ w (t, f ( \omega ), \omega ) \equiv  0 $
(thus $ w_t ( t, f ( \omega ) , \omega ) = 0 $ and 
$ (\nabla_h w ) ( t , f ( \omega ) , \omega ) = - w_r \nabla_h f ( \omega ) $.)
On $ \Gamma_2 $, 
\[ \mathbf n = \pm [ s , 0 , 0 ] ,  \ \ 
d S = s^{-1} dr d \! \vol_h , \]
\[  g ( J^V ( w ) , \mathbf w ) = \mp  
{\textstyle{\frac12} } s \left( a w_t^2 + b w_r w_t + a w_r^2 + a 
s^{-2 } | \nabla_h w |^2_h +  a m^2 |w|^2 \right)  .\]
Hence, \eqref{eq:dive} gives
\[ F_{ a, b } ( w, T ) = F_{a,b} ( w, 0 ) -   \int_0^T \int_{\SP^{n-1} } 
b (t , f ( \omega ))  ( 1 + s^{-2} | \nabla_h f |^2_h ) w_r ( f ( \omega) , \omega )^2\, d \! \vol_h d t . \]
The more invariant form given in \eqref{eq:energyl2} 
follows from explicit expressions: 
\[ \nu = ( 1 + s^{-2} | \nabla_h f |^2_h )^{-\frac12} ( 1 , - 
s^{-2} \nabla_h f ) , \ \ 
d \sigma_g = s^{n-1} ( 1 + s^{-2} | \nabla_h f |^2_h)^{\frac12} 
d \vol_h, \]
and $ g ( \nu ,  \nabla_g w ) = w_r ( 1 + s^{-2} |\nabla_h f |^2 )^{\frac12}$. 
\end{proof}

We remark that \eqref{eq:energyl2} is valid for any obstacle $ \mathcal O$; the assumption that $\partial\mathcal O$ be star-shaped implies however that the second term on the right hand side is negative.

\subsection{Proof of Theorem 1 for \texorpdfstring{$ n \geq 3 $}{n at least 3}}
\label{s:pt1}

Let $ a, b $ be given by \eqref{eq:atbr1}.
Lemmas \ref{l:conj} and \ref{l:energy} give the following energy inequality: 
Suppose that, in the notation \eqref{eq:Boxka}, 
\begin{gather*}   \Box_\kappa u = 0  \ \text{ in } \  [ 0, T ] \times \mathbb H_\kappa^n 
\setminus \mathcal O , \ \ u  = 0  \ \text{ on } \  [ 0, T ] \times \partial \mathcal O, \\
( u ( 0 , x ) , u_t ( 0 , x ) ) \in (H^1 \times L^2) ( \mathbb H_\kappa^n \setminus \mathcal O ) ,
\end{gather*} 
and define
\begin{equation}
\label{eq:Eab}
  E_{a,b}(u,t,r) := F_{a,b}(s^{\frac{n-1}{2}}u, t, r), \ \ E_{a,b}(u,t) := E_{a,b}(u,t,\infty),
\end{equation}
then we have
\[ E_{a,b} ( u, t ) \leq E_{a,b} ( u, 0 ) ,\ \ t\geq 0. \]
We now assume that
\[   \supp \partial_t^k u ( 0 , \bullet ) \subset B (0 , R ), \ \ k=0,1. \]
Since 
for any $a,b\geq 0$,
\[   ( a - b ) ( x^2 + y^2 ) \leq a x^2 + 2 b xy + a y^2 \leq ( a + b ) ( x^2 + y^2 ) \]
and since 
for  $ n \geq 3 $,  $ m^2 = ( n-1)(n-3)/4 \geq 0 $, we then have 
\begin{gather*} 
  E_{ c, 0 } ( u , t, R ) \leq E_{a,b}(u,t,R), \ \  t \geq 0 , \\ 
   c ( t, r ) := a (t,r) - b ( t, r)  = 2 \kappa^{-2}  (\cosh \kappa ( t - r ) -1) 
\end{gather*}
Hence for $ t \geq R $, and using that $a=c$ and $b=0$ at $t=0$,
\[
\begin{split}  c ( t , R ) E_{1,0} ( u ,t , R ) & \leq E_{c, 0} ( u, t ,R ) 
\leq E_{a,b}(u,t,R) \\
& \leq E_{a,b} ( u  , 0 ) = E_{c, 0} ( u, 0 ) \leq c ( 0, R ) E_{1,0} ( u, 0 ) .
\end{split}
\]
We obtain for $ t >  2 R $:
\begin{equation}
\label{eq:EutR}   E_{1,0} ( u , t , R ) \leq  \frac{ \cosh \kappa R - 1 } { \cosh \kappa ( t - R ) - 1 } 
E_{1,0} ( u , 0 ) \leq C e^{-\kappa t}E_{1,0}(u,0).
\end{equation}
The results of \S \ref{s:rfs} then give Theorem \ref{t:1}. This crucially uses that $E_{1,0}(u,t,R)$ is coercive (unlike the integral of the natural energy density for $u$, given by the left hand side of the identity~\eqref{eq:energyk}, over $B(0,R)\setminus\mathcal O$).

We note that the second part of Lemma \ref{l:conj} shows that for the quantity
\[
  \mathcal E( u, t, R ) := \int_{B(0,R)\setminus\mathcal O}\left(u_t^2+|\nabla_g u|^2-\kappa^2\left(\tfrac{n-1}{2}\right)^2u^2\right)\,d\mathrm{vol}_g\,|_{t=T},
\]
$\mathcal E(u,T):=\mathcal E(u,T,\infty)$, we have
\[   \mathcal E ( u , t ) = E_{1,0} ( u , t ) , \ \  \mathcal E ( u , t , R ) \leq E_{1,0} ( u, t, R ) . \]
However, $\mathcal E(u,t,R)$ is only coercive when $\kappa=0$, and in this case, the argument leading to~\eqref{eq:EutR} gives the estimate of Morawetz:
\begin{equation}
\label{eq:EutRM}  \mathcal E ( u , t , R ) \leq  \frac{ R^2 }{ ( t - R )^2 } \mathcal E ( u , 0 ) , \end{equation}
when the initial data have support in $ B ( 0 , R) \setminus \mathcal O $.

\section{Improved estimates in odd dimensions}
\label{s:imp}

We now revisit the argument of Morawetz for obtaining exponential decay 
in odd dimensions. 
We use the notation of \eqref{EqHyp} and \eqref{eq:Boxka} and
we denote by $ \nabla_\kappa $, $ | \bullet |_{\kappa } $ the 
gradient and norm with respect to the Riemannian metric on $ \HH^n_\kappa $. We recall that the obstacle $\mathcal O$ is star-shaped with respect to the origin $0\in\HH^n_\kappa$, and we assume that $\mathcal O$ is contained in the ball $B(0,\rho)$.

The key fact is the strong Huyghens principle illustrated in Fig.~\ref{f:mor_cone}: suppose that
\[
  \Box_\kappa u = F , \ F \in \mathcal D' ( \RR \times \HH^n_\kappa ) , \ \ 
u ( t , x )|_{ t < 0 } = 0;
\]
if
\[ \Gamma_{ ( t , x )}^- := \{ ( t', x') \colon t'\leq t,\ d_\kappa ( x,x' ) = t - t' \} \]
then
\begin{equation}
\label{eq:sHp}
\Gamma_{ ( t , x)}^{-} \cap \supp F = \emptyset , \ 
\ \Longrightarrow \ u (t, x ) = 0  .
\end{equation}

\begin{figure}
\includegraphics{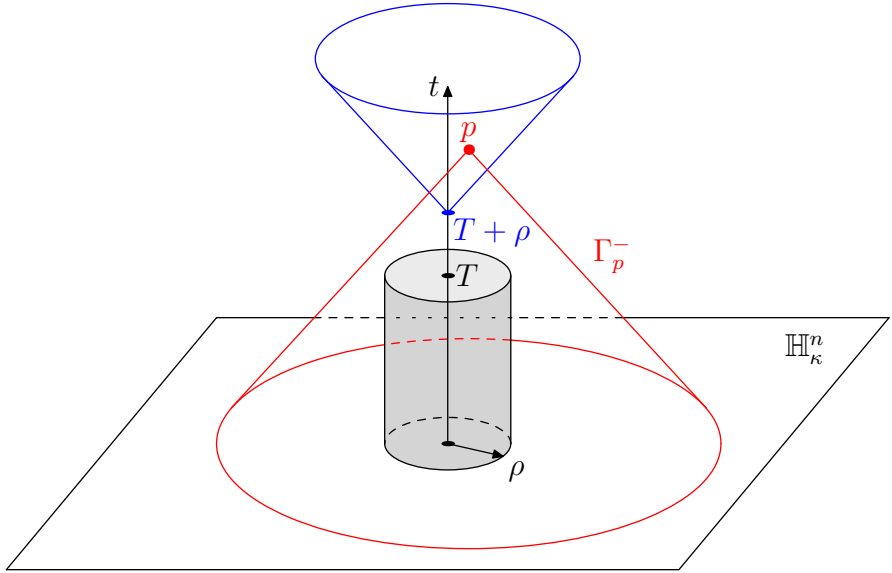}
\caption{If $ \Box_{\kappa} u = F $ in $ \RR_+ \times \HH_\kappa^n $, $ n\geq 3 $, odd,
and $ F$ is supported in 
$ B_{T, \rho }:= [ 0 , T ] \times B ( 0 , \rho ) $ (shaded region), and $u|_{t=0}$ and $\pa_t u|_{t=0}$ are supported in $B(0,T+\rho)$, then $ u ( t , x) \equiv 0 $ for $ |x| \leq t - (T + \rho) $.}
\label{f:mor_cone}
\end{figure}

We will make use of the (local) energy
\[
  E(u,t,r):=E_{1,0}(u,t,r), \ \ E(u,t):=E(u,t,\infty),
\]
defined using \eqref{eq:Eab}. For $ u ( t , \bullet ) $ defined on $ \mathbb H^n_\kappa $, we can also integrate over $ B ( 0 , r ) $ and we denote the corresponding energies by $ E_0 ( u, t , r ) $, $ E_0 ( u, t ) $.

We will now consider 
\begin{equation}
\label{eq:initu}  \begin{gathered}  \Box_\kappa u = 0 \ \text{on} \ 
 [ 0 , \infty ) \times \HH^n_\kappa \setminus \mathcal O , \ \ 
 u|_{ ( 0 , \infty ) \times \partial \mathcal O } = 0 \\
  \supp \partial_t^k u|_{t=0} \subset 
B ( 0 , R ) , \ \ 
 \partial_t^k u|_{t=0}  \in H^{1-k} ( \HH^n_\kappa \setminus \mathcal O ) , \ \ k = 0,1 . 
\end{gathered} \end{equation}
For solutions $ u $, the energy $ E ( u , t ) $ does not depend on time.

\begin{lemm}
\label{l:mor2}
Let $ u $ be the solution to the initial value problem 
\eqref{eq:initu} with $ R = 3\rho $. Then for any $ T \geq 2\rho $, we have a decomposition
\begin{equation}
\label{eq:uTrT}
\begin{gathered}
u ( t , x ) = u_T ( t , x ) + r_T ( t, x ) \ \text{ for }  \ t \geq T, \\ 
u_T ( t , x) = 0 \ \text{ for }  \  d ( x , 0 ) \geq t - (T - \rho) ,\\
r_T ( t, x ) = 0 \ \text{ for }  \  d ( x, 0 ) \leq t - (T + \rho) , \\
E ( u_T , T + 2\rho ) \leq  4 E ( u , T , 3 \rho ).
\end{gathered}
\end{equation}
\end{lemm}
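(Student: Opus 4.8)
The plan is to split $u$ using the finite speed of propagation and the strong Huyghens principle \eqref{eq:sHp}, then to compare the energy of the "interior" piece $u_T$ with the local energy of $u$ at time $T$. First I would set $v := u$ on $[0,T]\times(\HH^n_\kappa\setminus\mathcal O)$ and let $u_T$ be the solution of $\Box_\kappa u_T = 0$ on $[T,\infty)\times(\HH^n_\kappa\setminus\mathcal O)$ with Dirichlet condition on $\partial\mathcal O$ and Cauchy data $(u_T,\partial_t u_T)|_{t=T} = \chi\cdot(u,\partial_t u)|_{t=T}$, where $\chi=\chi(x)$ is a cutoff equal to $1$ on $B(0,3\rho)$ and supported in $B(0,3\rho+\delta)$ for small $\delta$ (ultimately one takes $\delta\to 0$ or absorbs it; since $R=3\rho$ the data of $u$ at time $T$ are already supported in $B(0,T+\rho)$, but we only need the part near the obstacle, so a cutoff at radius slightly bigger than $3\rho$ is harmless after using finite speed of propagation with $T\ge 2\rho$). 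Then $r_T := u - u_T$ on $[T,\infty)$. By finite speed of propagation (unit speed), $u_T(t,x)=0$ for $d(x,0)\ge (t-T) + (3\rho+\delta) = t-(T-3\rho-\delta)$; the claimed bound $d(x,0)\ge t-(T-\rho)$ wants a sharper radius, which is where the strong Huyghens principle enters.

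For the support statement on $r_T$: the difference $r_T$ solves $\Box_\kappa r_T = 0$ away from $\mathcal O$, vanishes on $\partial\mathcal O$, and has Cauchy data $(1-\chi)\cdot(u,\partial_t u)|_{t=T}$ at $t=T$, supported in $\{d(x,0)\ge 3\rho\}$, hence in particular outside $\mathcal O\subset B(0,\rho)$. On the region $\{d(x,0)\le t-(T+\rho)\}$ the backward light cone $\Gamma^-_{(t,x)}$ from a point with $d(x,0)\le t-(T+\rho)$ reaches time $T$ only inside $B(0,t-(T+\rho)-(t-T)) $... more carefully: at time $T$ it meets $\{d(y,0)\le d(x,0)+(t-T)\le (t-T-\rho-\ldots)\}$, so with $d(x,0)\le t-(T+\rho)$ it lies in $B(0,t-2\rho-\ldots)$ — the point is to arrange that $\Gamma^-_{(t,x)}$ stays away from $\mathrm{supp}\big((1-\chi)u|_{t=T}\big)\cup\big([T,\infty)\times\mathcal O\big)$ and then invoke \eqref{eq:sHp} (applied on $\HH^n_\kappa$, treating the obstacle's influence as part of the forcing $F$ supported near $\mathcal O$), giving $r_T(t,x)=0$ there. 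The strong Huyghens principle, not just finite speed, is what gives the two-sided gap of width $2\rho$ in the two support conditions and is also what upgrades the $u_T$ support statement to radius $t-(T-\rho)$: $u_T$ is genuinely supported on the forward light cone emanating from $\mathrm{supp}(\chi u|_{t=T})\cup([T,\infty)\times\mathcal O)\subset B(0,3\rho+\delta)$, and by sharp Huyghens (odd $n\ge 3$) on the set where this forward influence has already passed, $u_T$ vanishes; combined with a short propagation-distance bookkeeping using $T\ge 2\rho$ this yields $\{d(x,0)\ge t-(T-\rho)\}$ after letting $\delta\downarrow 0$.

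For the energy bound: since $u_T$ solves the wave equation with Dirichlet conditions, $E(u_T,t)$ is independent of $t$, so $E(u_T,T+2\rho)=E(u_T,T)$. The data of $u_T$ at $t=T$ are $\chi\cdot(u,\partial_t u)|_{t=T}$, supported in $B(0,3\rho+\delta)$; hence $E(u_T,T) = E(u_T,T,3\rho+\delta)$, and the latter is controlled by $E(u,T,3\rho+\delta)$ up to the commutator generated by $\chi$. The coercive quadratic form $F_{1,0}$ (see \eqref{eq:Fab}, \eqref{eq:Eab}) is a sum of squares, so for the pieces where $\chi\equiv 1$ there is no loss, and the Leibniz error terms from $\nabla\chi$ involve $u^2$ on the annulus $\{3\rho\le d(x,0)\le 3\rho+\delta\}$, which we bound by the full local energy $E(u,T,3\rho+\delta)$ using a Poincaré/Hardy-type inequality on that fixed annulus (the constant is absorbed, and $\delta\to 0$); together with $\|\chi\|_\infty\le 1$ this gives a clean factor. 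Keeping track of the constants — the factor from $(a-b)\le ax^2+2bxy+ay^2$ type bounds is not needed here since $b=0$, so the only losses are from $\chi$ — one arranges $E(u_T,T+2\rho)\le 4\,E(u,T,3\rho)$; the "$4$" is the crude bound $(1+\text{commutator constant})^2$ after sending $\delta\to 0$ so the annulus is measure-zero and $\chi$ can be taken to be the sharp cutoff at radius $3\rho$. I expect the main obstacle to be precisely this last accounting: making the cutoff argument rigorous while getting the clean constant $4$ and the sharp support radii $T\pm\rho$ simultaneously, which forces the careful $\delta\to 0$ limit and the correct use of sharp Huyghens rather than mere finite speed of propagation.
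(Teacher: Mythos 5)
Your decomposition is not the one the lemma needs, and its pieces do not satisfy the stated support conditions. The paper's $r_T$ is the \emph{free} solution on all of $\HH_\kappa^n$ (no obstacle), with Cauchy data at $t=T$ equal to the data of $u$ extended by zero into $\cO$, so that $u_T=u-r_T$ has \emph{zero} Cauchy data at $t=T$ and is driven only by the boundary data $-r_T$ on $[T,\infty)\times\pa\cO$. In your version both pieces solve the obstacle problem, and this defeats the argument. Your $r_T$ acquires reflected waves from $\pa\cO$ at all $t>T+2\rho$, so the obstacle's influence on it is a forcing that is \emph{not} compactly supported in time, and \eqref{eq:sHp} cannot be invoked. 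Even discarding the obstacle and using sharp Huygens for the free evolution of data supported in $\{3\rho\leq d(x,0)\leq T+3\rho\}$ only gives vanishing on $\{d(x,0)<(t-T)-(T+3\rho)\}$, far weaker than the claimed $\{d(x,0)\leq t-(T+\rho)\}$. Likewise, your $u_T$ has Cauchy data supported in $B(0,3\rho+\delta)$, so finite speed gives only $u_T=0$ for $d(x,0)>t-(T-3\rho-\delta)$, and sharp Huygens does not improve this because $u_T$'s reflections off $\pa\cO$ persist indefinitely. By contrast, the paper's $u_T$ satisfies $u_T=0$ for $d(x,0)>t-(T-\rho)$ by \emph{finite speed of propagation alone}, since it is driven solely by data on $[T,\infty)\times\pa\cO\subset[T,\infty)\times B(0,\rho)$; sharp Huygens plays no role in the $u_T$ support bound.

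The idea you are missing is to trace $r_T$ back to $t=0$: the free solution $r_T$ agrees on $t\geq T$ with the forward solution of $\Box_\kappa\tilde r_T = F$, where $F=1_{t<T}\Box_\kappa\tilde u$ (with $\tilde u$ the extension of $u$ by zero in $t<0$ and in $\cO$) has $\supp F\subset[0,T]\times\cO$; applying \eqref{eq:sHp} to that forward problem, together with the fact that the $t=0$ data is supported in $B(0,3\rho)\subset B(0,T+\rho)$, gives the stated vanishing of $r_T$. On the energy side, note also that the paper's $u_T$ does \emph{not} have conserved energy — its Dirichlet data on $\pa\cO$ are nonzero — so $E(u_T,T+2\rho)=E(u_T,T)$ fails, and the bound $E(u_T,T+2\rho)\leq 4E(u,T,3\rho)$ is obtained by a separate argument placing an artificial timelike boundary at $|x|=3\rho$ and using finite speed to keep the auxiliary problems energy-conserving. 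Your cutoff argument also has a gap: the $\nabla\chi$ commutator is bounded by a Poincar\'e inequality on an annulus of width $\delta$, yet $u(T,\cdot)$ satisfies no boundary condition there, and letting $\delta\to 0$ does not obviously yield the clean factor $4$ with the radius $3\rho$.
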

\begin{proof}
If  
$
 u|_{t = T } =: f_T \in H^1_0 ( \HH^n_\kappa \setminus 
\mathcal O) $, and $ \partial_t u|_{t = T } =: g_T \in L^2  ( \HH^n_\kappa \setminus 
\mathcal O)$, 
we define
$ \tilde f_T \in H^1 ( \HH^n_\kappa ) $ and $ g_T \in L^2 ( \HH^n_\kappa ) $ by 
extending $ f_T $ and $ g_T $ by $ 0 $ to $ \mathcal O $. 
We then solve the free equation
\[
  \Box_\kappa r_T = 0 \ \text{on} \ [ T, \infty ) \times \HH^n_\kappa, \ \
r_T |_{t=T} = \tilde f_T , \  \ \partial_t r_T |_{ t = T } = \tilde g_T .
\]
To prove the support condition on $r_T$, define $\tilde u\in L^2(\RR\times\HH^n_\kappa)$ to be equal to $u$ in $(0,\infty)\times\HH^n_\kappa\setminus\mathcal O$, and equal to $0$ otherwise; let then $F=1_{t<T}\Box_\kappa\tilde u$, which has $\supp F\subset[0,T]\times\mathcal O$. Then the forward solution of $\Box_\kappa\tilde r_T=F$ is equal to $\tilde u$ in $t<T$, hence has the same Cauchy data as $r_T$ at $t=T$, and we conclude that $r_T=\tilde r_T$ in $t\geq T$; it remains to apply \eqref{eq:sHp}. (See Fig.~\ref{f:mor_cone}.)

We then see that $ u_T := u - r_T $ solves the mixed problem
\[ \begin{gathered} \Box_\kappa u_T = 0 \ \text{on} \ [ T, \infty ) \times \HH^n_\kappa \setminus 
\mathcal O, \ \
u_T |_{t=T} = 0 , \  \partial_t u_T |_{ t = T } = 0,  \\ 
u_T |_{ [T, \infty ) \times \partial \mathcal O } = - r_T |_{ [ T , \infty ) \times \partial \mathcal O} . \end{gathered} \]
It remains to estimate the energy of $ u_T $. Note that the support of $u_T$ and $\pa_t u_T$ at $t=T+2\rho$ is contained in $B(0,3\rho)$. Thus, using the Killing vector field $\partial_t$ (for the metric $G$ and the function $w=s^{\frac{n-1}{2}}u$) to obtain energy estimates, we have
\begin{equation}
\label{eq:naive}
\begin{split}
  E(u_T,T+2\rho) &\leq 2 E(u, T+2\rho, 3\rho) + 2 E(r_T, T+2\rho, 3\rho) \\
    &\leq 2 E(u, T, 5\rho) + 2 E_0(r_T, T, 5\rho) = 4 E(u, T, 5\rho). 
\end{split}
\end{equation}

The improved estimate in \eqref{eq:uTrT} is obtained as follows. The boundary data $ -r_T |_{ [ T , \infty ) } $ of $u_T$ depend only on the values of $ r_T $ in the backwards solid cone slice
\[ \mathcal C_{ T, \rho} := \{ ( t , x ) : d ( x, 0 ) \leq T + 3 \rho - t ,  \ T \leq t \leq T + 2\rho \} \]
and hence on $u(T,\bullet)$ and $u_t(T,\bullet)$ in $ B ( 0 , 3 \rho ) \setminus
\mathcal O $ -- see Fig.~\ref{f:cone2}. In \eqref{eq:naive}, we estimated the energy of $u_T$ by writing it as the difference of two solutions, $u$ and $r_T$, each of which satisfied simple energy estimates that did not involve data on timelike boundaries. In order avoid contributions from outside $B(0,3\rho)$, we place a timelike boundary at $[T,T+2\rho]\times\partial B(0,3\rho)$, which does not affect waves inside the cone $\mathcal C_{T,\rho}$. Thus, consider the boundary value problem
\begin{gather*}
  \Box_\kappa r'_T = 0\ \text{on}\ [T,T+2\rho]\times B(0,3\rho),  \ \ r'_T |_{t=T} = r_T, \ \partial_t r'_T|_{t=T}=\pa_t r_T, \\
  r'_T|_{[T,T+2\rho]\times\partial B(0,3\rho)} = r_T|_{\{T\}\times\partial B(0,3\rho)}.
\end{gather*}
Note that the data on the artificial boundary are independent of $t$.\footnote{Since we are working on the level of $H^1$ only, there are no additional compatibility conditions on $\partial_t r'_T$ at $\{T\}\times\partial B(0,3\rho)$. One can also see this directly by taking $\partial_t r'_T|_{t=T}=1_{r<3\rho-\delta}\partial_t r_T$ for $\delta>0$ small, and letting $\delta\to 0$.} The above domain of dependence argument implies $r'_T|_{[T,T+2\rho]\times\partial\mathcal O}=r_T|_{[T,T+2\rho]\times\partial\mathcal O}$. Moreover, since $\pa_t r'_T\equiv 0$ on the artificial boundary $[T,T+2\rho]\times\partial B(0,3\rho)$, we have the energy identity
\[
  E(r'_T, T+2\rho) = E(r'_T, T) = E(r_T, T, 3\rho) = E(u, T, 3\rho),
\]
where we measure the energy of $r'_T$ in $B(0,3\rho)$. On the other hand, the function $u'_T$, defined by
\begin{gather*}
  \Box_\kappa u'_T = 0 \ \text{on} \ [ T, T+2\rho ] \times B(0,3\rho) \setminus \mathcal O, \ \ u'_T |_{t=T} = 0 , \  \partial_t u'_T |_{ t = T } = 0,  \\ 
 u'_T |_{ [T, T+2\rho] \times \partial \mathcal O } = - r'_T |_{ [ T , T+2\rho] \times \partial \mathcal O}, \ \ u'_T|_{[T,T+2\rho]\times\partial B(0,3\rho)} = 0,
\end{gather*}
is equal to $u_T$ in $[T,T+2\rho]\times B(0,3\rho)\setminus\mathcal O$. To estimate the energy of $u_T$, hence $u'_T$, at $t=T+2\rho$ in $B(0,3\rho)$, we note that $u':=u'_T+r'_T$ solves the wave equation
\begin{gather*}
  \Box_\kappa u' = 0\ \text{on}\ [T,T+2\rho]\times B(0,3\rho)\setminus\mathcal O, \ \ u'|_{t=T}=r_T,\ \partial_t u'|_{t=T}=\partial_t r_T, \\
  u'|_{[T,T+2\rho]\times\partial\mathcal O}=0, \ u'|_{[T,T+2\rho]\times\partial B(0,3\rho)}=r_T|_{\{T\}\times\partial B(0,3\rho)}
\end{gather*}
Thus, $u'$ satisfies the energy identity
\[
  E(u',T+2\rho) = E(u',T) = E(u,T,3\rho);
\]
and therefore we have
\[
  E(u_T,T+2\rho,3\rho) \leq 2 E(u',T+2\rho) + 2 E(r'_T,T+2\rho) = 4 E(u,T,3\rho),
\]
as claimed in \eqref{eq:uTrT}.
\end{proof}

\begin{figure}
\includegraphics[width=5in]{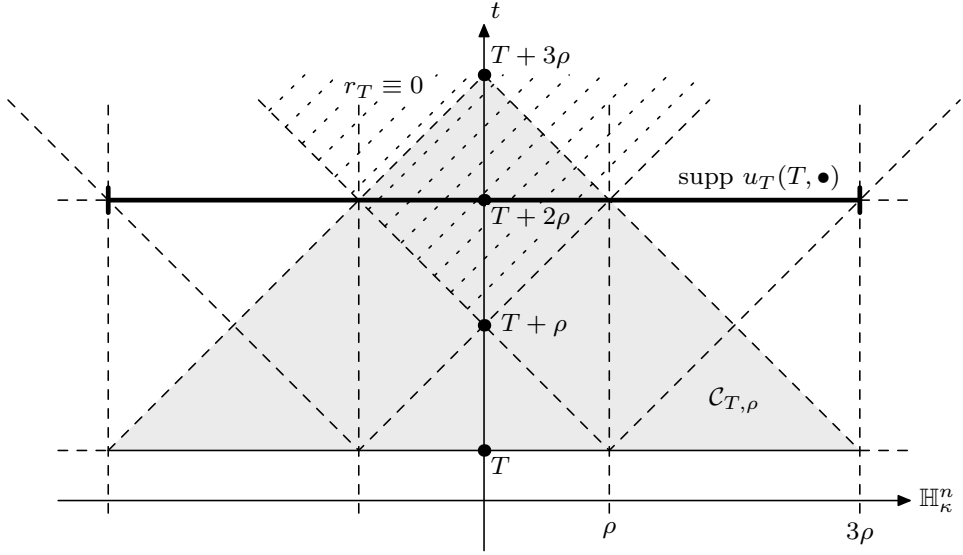}
\caption{Domain of dependence relations for $ u_T $.}
\label{f:cone2}
\end{figure}

\begin{lemm}
\label{l:mor3}
Suppose that $ n \geq 3 $ is {\em odd} and that solutions $ u $ of \eqref{eq:initu} with $ R = 3 \rho $
satisfy
\begin{equation}
\label{eq:weakdec}   E ( u , t  , 3 \rho ) \leq p ( t ) E ( u , 0 ) ,  \ \ t > 0 , \end{equation}
for some decreasing function $ t \mapsto p ( t ) $.
Then solutions to \eqref{eq:initu}  satisfy
\begin{equation}
\label{eq:strongdec} 
E ( u , t , 3 \rho ) \leq  C e^{ - 2\alpha t } E ( u , 0 ) , \ \ t > 0 , 
\end{equation}
where
\begin{equation}
\label{eq:defalpha}   \alpha = - \min_{ \tau \geq 4 \rho }  \frac{ \log (4 p ( \tau - 2\rho )) }{ 2 \tau }. 
\end{equation}
\end{lemm}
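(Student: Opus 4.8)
The plan is to iterate the weak decay estimate \eqref{eq:weakdec} using the structural decomposition of Lemma \ref{l:mor2}, exploiting the strong Huyghens principle to turn a single fixed-time improvement into exponential decay. The starting observation is that a solution of \eqref{eq:initu} with data supported in $B(0,3\rho)$, after running to time $T$, can be split as $u=u_T+r_T$ where $r_T$ is a free (obstacle-free) wave and $u_T$ is a solution of the mixed problem with zero Cauchy data at $t=T$ and source only through the boundary values $-r_T|_{\partial\mathcal O}$. By \eqref{eq:uTrT}, at the later time $T+2\rho$ the function $u_T$ is supported in $B(0,3\rho)$ and its energy is controlled: $E(u_T,T+2\rho)\le 4E(u,T,3\rho)$.

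First I would set up the iteration. Fix $\tau\ge 4\rho$ and apply Lemma \ref{l:mor2} with $T=\tau-2\rho\ge 2\rho$. The tail $r_T$ is a free wave; by the strong Huyghens principle \eqref{eq:sHp} it vanishes for $d(x,0)\le t-(T+\rho)$, so at any time $t\ge T+2\rho+r$ it contributes nothing to $E(\cdot,t,3\rho)$ once $t$ is large enough — more precisely, $r_T$ does not contribute to the local energy in $B(0,3\rho)$ for $t\ge \tau+4\rho$ or so. Meanwhile $u_T$ is, at time $T+2\rho=\tau$, a solution of the \emph{same} type of mixed problem \eqref{eq:initu} (zero data at one time, then reflected waves), now with initial time $\tau$ and data supported in $B(0,3\rho)$ of energy at most $4E(u,\tau-2\rho,3\rho)\le 4p(\tau-2\rho)E(u,0)$, using that energy for the full solution on $\HH^n_\kappa\setminus\mathcal O$ is conserved so $E(u,\tau-2\rho)=E(u,0)$ and $E(u,\tau-2\rho,3\rho)\le p(\tau-2\rho)E(u,0)$ by \eqref{eq:weakdec}.

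Next I would run the weak estimate \eqref{eq:weakdec} on $u_T$ itself (time-translated to start at $\tau$) and iterate. After $N$ applications, the local energy at time $N\tau$ is bounded by $\bigl(4p(\tau-2\rho)\bigr)^N E(u,0)$ plus contributions from the accumulated tails $r_T$ at each stage; the Huyghens support condition guarantees those tails have left $B(0,3\rho)$ by the time the next stage is evaluated, so they do not pollute the local energy (this is exactly the role of the cone geometry in Fig.~\ref{f:mor_cone} and Fig.~\ref{f:cone2}). Thus $E(u,N\tau,3\rho)\le \bigl(4p(\tau-2\rho)\bigr)^N E(u,0)$. Optimizing: for general $t$, write $t=N\tau+t'$ with $0\le t'<\tau$, use energy conservation and \eqref{eq:weakdec} once more on the remainder to absorb $t'$ into a constant $C$, and obtain $E(u,t,3\rho)\le C\exp\bigl(\tfrac{t}{\tau}\log(4p(\tau-2\rho))\bigr)E(u,0)$. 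Choosing $\tau$ to minimize $\tfrac{1}{2\tau}\log(4p(\tau-2\rho))$ — which is exactly the definition \eqref{eq:defalpha} of $\alpha$ — yields \eqref{eq:strongdec}. (One must check $4p(\tau-2\rho)<1$ for some admissible $\tau$, i.e. that $\alpha>0$; this is implicit in the statement since otherwise \eqref{eq:defalpha} gives $\alpha\le 0$ and the conclusion is vacuous. In the applications $p(t)\to 0$, so large $\tau$ works.)

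The main obstacle is bookkeeping the tail terms $r_T$ across the iteration and verifying rigorously that the strong Huyghens principle removes their contribution to the \emph{local} energy $E(\cdot,t,3\rho)$ at every subsequent stage — i.e. that the geometry of the backward cones $\mathcal C_{T,\rho}$ lines up so that no reflected-then-re-entering energy is missed. This is precisely where oddness of $n$ and the sharp Huyghens principle \eqref{eq:sHp} are essential (in even dimensions the tails have infinite support and only polynomial decay survives, as in \eqref{eq:EutRM}). The energy estimates for the mixed problems are standard applications of the Killing field $\partial_t$ for $G$ acting on $w=s^{(n-1)/2}u$, as in \S\ref{s:moe}, so no new analytic input is required there.
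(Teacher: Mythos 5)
Your proposal is correct and takes essentially the same route as the paper: both arguments use Lemma~\ref{l:mor2} to obtain the one-step local-energy contraction $E(u,T+\tau,3\rho)\le 4\,p(\tau-2\rho)\,E(u,T,3\rho)$ for $\tau\ge 4\rho$, $T\ge 2\rho$, then iterate and optimize over $\tau$. The only cosmetic remarks: the tail $r_T$ actually leaves $B(0,3\rho)$ already at $t\ge T+4\rho=\tau+2\rho$ (not $\tau+4\rho$), and once the one-step inequality is established for $u$ itself there is no ``accumulated tails'' bookkeeping to track across iterations — each stage applies Lemma~\ref{l:mor2} afresh to $u$, which is how the paper streamlines the iteration.
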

\begin{proof}
Suppose $ u_T $ is given by Lemma \ref{l:mor2}. We can then apply
\eqref{eq:weakdec} to $ u_T $ (with the time origin shifted by $ T+2\rho $) to obtain
\begin{equation} 
\label{eq:EuTt}  E ( u_T , T + 2\rho + t , 3 \rho ) 
\leq p ( t ) E ( u_T , T + 2\rho ) \leq 4 p ( t ) E ( u , T , 3 \rho),
\end{equation}
provided $T\geq 2\rho$; for the first inequality we use that the support of the Cauchy data of $u_T$ at $t=T+2\rho$ is contained in $B(0,3\rho)$. From the support properties of $ r_T $ we see that $ u ( t, x ) = u_T ( t, x ) $ if $ d ( 0 , x ) \leq t - (T + \rho) $ and hence $ u_T ( T+2\rho+t , x) = u ( T+2\rho+t , x)  $ for $ d ( x, 0 ) \leq 3 \rho $ and $ t \geq 2\rho $. This and \eqref{eq:EuTt} (with $ t = \tau - 2\rho $, and $\tau\geq 4\rho$) imply that
\[
  E ( u, T + \tau  , 3 \rho ) \leq 4 p ( \tau - 2\rho ) E ( u, T , 3 \rho) , \ \ \tau \geq 4\rho .
\]
Starting with $T=\tau-2\rho$, $\tau\geq 4\rho$, with $E(u,T,3\rho)\leq p(\tau-2\rho)E(u,0)$, and iterating this estimate we see that
\[  E ( u , n\tau , 3 \rho ) \leq 4^{n-1}  p ( \tau - 2\rho )^n E ( u , 0 ) , \ \ 
\tau \geq 4 \rho ,\]
from which the conclusion \eqref{eq:strongdec} is immediate.
\end{proof}

The function $ t \mapsto p ( t ) $ appearing in \eqref{eq:weakdec} is given 
by \eqref{eq:EutR} and \eqref{eq:EutRM}:
\[
 p ( t ) = \left\{ \begin{array}{ll}  (\cosh (3 \kappa \rho) -1)/( \cosh(\kappa(t - 3 \rho)) - 1 ), & \kappa > 0 , \\ 
\ \ \ \ \ \ \ \ \ 9 \rho^2 / ( t - 3 \rho )^2 , & \kappa = 0 , \end{array} \right.
\]
for $t>3\rho$.

For $ \kappa = 0 $, $ \alpha ( \rho ) = \rho^{-1} \alpha ( 1 ) $ and we obtain (taking into account that for $\rho=1$ our expression for $p(\tau-2)$ is only valid for $\tau-2>3$)
\begin{equation}
\label{eq:eucl}
\alpha = - \frac 1 \rho \min_{ t > 5 } 
\frac{ \log 36  - 2 \log ( t - 5 ) }{2 t}  = \frac{\mu}{ \rho} ,
\end{equation}
which gives
\[
  \mu \simeq 0.0482,
\]
more than twenty times worse than the bound \eqref{eq:Ralston} obtained using
complex analysis methods applied to the scattering matrix \cite{RalstonDecay}.

For $\kappa>0$, we put $ \tau = (t + 5)\rho $ in \eqref{eq:defalpha}. This gives 
\begin{equation}
\label{eq:unialph}
  \alpha = \frac{A(\kappa\rho)}{\rho},
\end{equation}
where
\[
  A(\tilde\rho) := \max_{t>0} a(\tilde\rho,t), \ \ a(\tilde\rho,t) := \frac{1}{2(t+5)}\log\Bigl(\frac{\cosh(\tilde\rho t)-1}{4(\cosh(3\tilde\rho)-1)}\Bigr).
\]
Letting $t\to\infty$, one finds $A(\tilde\rho)\geq\tfrac12 \tilde\rho$, hence \eqref{eq:unialph} recovers $\alpha\geq\tfrac12 \kappa$. We get an improvement over this unconditional rate $\kappa$ when $A(\tilde\rho)>\tfrac12 \tilde\rho$, which happens when there exists $t>0$ such that
\[
  \frac{1}{2}(1-e^{-\tilde\rho t})^2 > 4 e^{5\tilde\rho}\bigl(\cosh(3\tilde\rho)-1\bigr);
\]
this has a solution if the right hand side is $\ <1/2$, which happens for $\tilde\rho<0.1221$. One can show that $A(\tilde\rho)$ is monotonically increasing, and $A(\tilde\rho)\to\mu$ as $\tilde\rho\to 0$. See Fig.~\ref{FigA}. Thus, we obtain the unconditional gap $\alpha>\mu/\rho$.

\begin{figure}[!ht]
  \centering
  \includegraphics[width=0.4\textwidth]{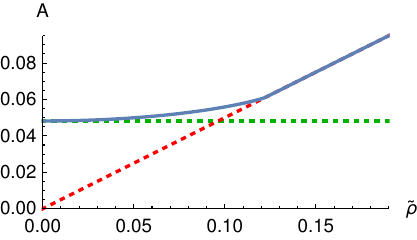}
  \caption{Estimates for the resonance width $\alpha=A/\rho$. \textit{Blue:} graph of $A(\tilde\rho)$. \textit{Green:} $A\equiv\mu$, corresponding to the Euclidean estimate~\eqref{eq:eucl}. \textit{Red:} $A=\tfrac12 \tilde\rho$, corresponding to the unconditional bound~\eqref{eq:t1}.}
  \label{FigA}
\end{figure}

\section{Hyperbolic space and general relativity} 
\label{s:geome}

The connection between hyperbolic space and de Sitter space of general relativity was emphasized by Vasy \cite{vasy2,vasy1} in his approach to the meromorphic continuation of the resolvent on asymptotically hyperbolic spaces (see \S \ref{s:mercont}). The key aspect which will be used in \S \ref{s:rela} is 
the characterization of resonant states as solutions to a conjugated equation
which extend smoothly across the boundary at infinity. We begin by reviewing
explicit connections between various models.

\subsection{Models of de~Sitter space}

Let $\kappa>0$. De~Sitter space in $(1+n)$ dimensions is the manifold $\dS_\kappa^{n+1}=\R_{t_0}\times\Sph^n$ with the metric
\[
  g_{\dS_\kappa^{n+1}} = -dt_0^2 + \bigl(\kappa^{-1}\cosh(\kappa t_0)\bigr)^2 H,
\]
where $ H $ is the usual metric on $ \SP^n $. This is an Einstein metric, $\Ric(g_{\dS_\kappa^{n+1}})=n\kappa^2 g_{\dS_\kappa^{n+1}}$, hence the scalar curvature is $R_{g_{\dS_\kappa^{n+1}}}=n(n+1)\kappa^2$.

First, we introduce the conceptually useful \emph{Einstein universe} $E^{n+1}=\R_s\times\Sph^n$, equipped with the metric $g_{E^{n+1}}=-ds^2+H$. If we take $s=2\arctan(e^{\kappa t_0})\in(0,\pi)$, so $\kappa^{-1}\cosh(\kappa t_0)ds=dt_0$, then
\[
  g_{\dS_\kappa^{n+1}} = \bigl(\kappa^{-1}\cosh(\kappa t_0)\bigr)^2 g_{E^{n+1}} = (\kappa\sin s)^{-2} g_{E^{n+1}}.
\]

\begin{figure}[!ht]
  \centering
  \includegraphics{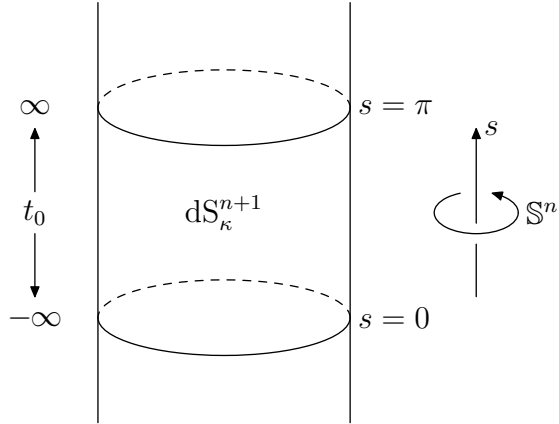}
  \caption{The Einstein universe $E^{n+1}=\R_s\times\Sph^n$, with $\dS_\kappa^{n+1}$ conformally diffeomorphic to the finite cylinder $(0,\pi)\times\Sph^n$.}
  \label{FigdSE}
\end{figure}

The coordinate change $t_M=\kappa^{-1}\sinh(\kappa t_0)$, so $dt_0=(1+\kappa^2 t_M^2)^{-1/2}\,dt_M$, expresses the de~Sitter metric as
\[
  g_{\dS_\kappa^{n+1}} = -\frac{dt_M^2}{1+\kappa^2 t_M^2} + (\kappa^{-2}+t_M^2)H,
\]
which is equal to the metric on the two-sheeted hyperboloid $\{ t_M^2 - |x_M|^2 = -\kappa^{-2} \} \subset (\R^{1+(n+1)}_{t_M,x_M}, -dt_M^2+dx_M^2)$ within Minkowski space, as can be seen by parametrizing $\dS_\kappa^{n+1}$ using the map $\R\times\Sph^n\ni(t_M,\omega)\mapsto(t_M,(\kappa^{-2}+t_M^2)^{1/2}\omega) \in \R^{1+(n+1)}$.

Next, we introduce the \emph{upper half space model}: define the map
\begin{equation}
\label{EqdSUHCoordChange}
  U^{n+1} := (0,\infty)\times\R^n \ni (\tau,x) \mapsto \Bigl(\frac{1-\kappa^2(\tau^2-|x|^2)}{2\kappa^2\tau}, \frac{1+\kappa^2(\tau^2-|x|^2)}{2\kappa^2\tau}, \frac{x}{\kappa\tau} \Bigr) \in \R^{2+n},
\end{equation}
where we write points in $\R^{2+n}$ as $(t_M,x_{M1},x'_M)$, i.e.\ splitting $x_M=(x_M^1,x'_M)$. This map is a diffeomorphism from the upper half space $U^{n+1}$ to the subset $\dS_{\kappa,*}^{n+1}=\{t_M+x_M^1>0\}\cap\dS_\kappa^{n+1}$ of de~Sitter space, and the de~Sitter metric takes the simple form
\begin{equation}
\label{EqdSUHMetric}
  g_{\dS_\kappa^{n+1}} = \frac{-d\tau^2+dx^2}{\kappa^2\tau^2}.
\end{equation}
The map~\eqref{EqdSUHCoordChange} is the inverse of the map
\begin{equation}
\label{EqdSUHMotivationMap}
  \dS_\kappa^{n+1} \ni (t_M,x_M^1,x'_M) \mapsto \Bigl(\frac{\kappa^{-2}}{x_M^1+t_M}, \frac{x'_M\kappa^{-1}}{x_M^1+t_M} \Bigr),
\end{equation}
defined for $x_M^1+t_M>0$, from which one deduces that the set $\dS_{\kappa,*}^{n+1}\subset\dS_\kappa^{n+1}$ in which the coordinates $(\tau,x)$ are valid is the causal future of the set $x_M^1+t_M=0$ within $\dS_\kappa^{n+1}$. As we will see below, this is equal to the causal future, within the Einstein universe, of the point $i^-$ at the past conformal boundary of $\dS_\kappa^{n+1}$, given by $(1,0)\in\Sph^n\hra\R^{n+1}$ at $s=0$. (We remark that the map \eqref{EqdSUHMotivationMap}, when instead restricted to $\HH^{n+1}_\kappa=\{t_M^2-|x_M|^2=\kappa^{-2},\ t_M>0\}$, takes one component of the two-sheeted hyperboloid in Minkowski space to the upper half space model of hyperbolic space.)

\begin{figure}[!ht]
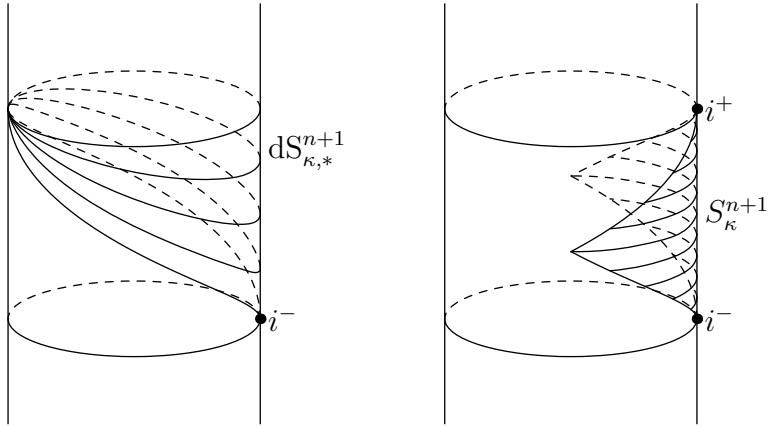

  \centering
  \includegraphics{dSUH} \hspace{0.4in} \includegraphics{dSStaticE}
\caption{\textit{Left:} the region $\dS_{\kappa,*}^{n+1}$ where the coordinates of the upper half space model are valid, within the Einstein universe, bounded in the past by the future light cone emanating from $i^-$. Also shown are level sets of the function $\tau$. \textit{Right:} the static patch $S_\kappa^{n+1}$ of de Sitter space (see \eqref{eq:static}) as a subset of the embedding of $\dS_\kappa^{n+1}$ into the Einstein universe $E^{n+1}$, together with level sets of $s$ (or $t_M$) within $S_\kappa^{n+1}$.}
\label{FigdSUH}
\end{figure}

For our purposes, the connection of hyperbolic space and de~Sitter space, exhibited in equation~\eqref{eq:HindS} below, takes place in the \emph{static model of de~Sitter space}, which we proceed to define. Fix the point $i^+_0=(1,0)\in\Sph^n\hra\R^{n+1}$, thought of as lying in the conformal boundary of $\dS_\kappa^{n+1}$ at future infinity. The static patch of de~Sitter space is the open submanifold
\begin{equation}
\label{eq:static}
  S_\kappa^{n+1} = \{ (t_M,x_M^1,x_M') \in \dS_\kappa^{n+1} \colon x_M^1\geq 0,\ |x_M'|<\kappa^{-1} \},
\end{equation}
see Fig.~\ref{FigdSUH} and Fig.~\ref{FigdSStatic}. $S_\kappa^{n+1}$ is the static patch of an observer who limits to the point $i^+=(\pi,i^+_0)\in E^{n+1}$ at future infinity and to the point $i^-=(0,i^-_0)$ at past infinity.

\begin{figure}[!ht]
  \centering
  \includegraphics{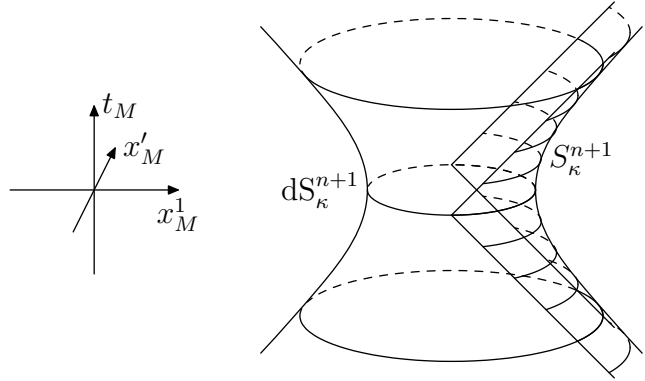}
  \caption{The static patch $S_\kappa^{n+1}$ of de Sitter space as a subset of the embedding of $\dS_\kappa^{n+1}$ into $(n+2)$-dimensional Minkowski space. Also shown are select level sets of $t_M$ within $S_\kappa^{n+1}$.}
  \label{FigdSStatic}
\end{figure}

We introduce static coordinates $(t,\rho,\theta)\in\R\times[0,\kappa^{-1})\times\Sph^{n-1}$ on $S_\kappa^{n+1}$ via
\begin{equation}
\label{EqdSStaticCoordChange1}
\begin{split}
  t_M &= (\kappa^{-2}-\rho^2)^{1/2}\sinh(\kappa t), \\
  x_M^1 &= (\kappa^{-2}-\rho^2)^{1/2}\cosh(\kappa t), \\
  x_M' &= \rho\theta,
\end{split}
\end{equation}
and the de~Sitter metric on $S_\kappa^{n+1}$ takes the well-known form
\begin{equation}
\label{EqdSStaticMetric}
  g_{S_\kappa^{n+1}} \equiv g_{\dS_\kappa^{n+1}}|_{S_\kappa^{n+1}} = -(1-\kappa^2\rho^2)\,dt^2 + (1-\kappa^2\rho^2)^{-1}\,d\rho^2 + \rho^2 g_{\Sph^{n-1}}.
\end{equation}
The singularity of this expression at $\rho=\kappa^{-1}$ is clearly a coordinate singularity since the global de~Sitter metric $g_{\dS_\kappa^{n+1}}$ extends smoothly to $|x_M'|=\kappa^{-1}$ and beyond. Concretely, introduce the Kerr-star type coordinate
\begin{equation}
\label{EqdSStaticTStar}
  t_* = t + \frac{1}{2\kappa}\log(1-\kappa^2\rho^2),
\end{equation}
then $dt=dt_*+\frac{\kappa \rho}{1-\kappa^2\rho^2}d\rho$, so
\begin{align*}
  g_{S_\kappa^{n+1}} &= -(1-\kappa^2\rho^2)\,dt_*^2 - 2\kappa \rho\,dt_*\,d\rho + d\rho^2 + \rho^2\theta^2 \\
   &= -(1-\kappa^2|X|^2)\,dt_*^2 - 2\kappa X\,dt_*\,dX + dX^2,
\end{align*}
where $X=\rho\theta\in\R^n$; this does extend beyond $|X|=\kappa^{-1}$ as a Lorentzian metric. Furthermore, this is closely related to the upper half space model: indeed, with
\begin{equation}
\label{EqdSStaticCoordChange2}
  \tau=\kappa^{-1}e^{-\kappa t_*},\quad x=e^{-\kappa t_*}X,
\end{equation}
we have
\[
  g_{S_\kappa^{n+1}} = \frac{-d\tau^2+dx^2}{\kappa^2\tau^2},
\]
which is the same expression as \eqref{EqdSUHMetric}. (In fact, the coordinate change \eqref{EqdSUHCoordChange} equals the composition of the two coordinate changes \eqref{EqdSStaticCoordChange1} and \eqref{EqdSStaticCoordChange2}.) See also Fig.~\ref{FigdStstar}, whose right panel combines the $\tau$ level sets of Fig.~\ref{FigdSE} with the depiction of the static patch within $E^{n+1}$ in Fig.~\ref{FigdSUH}.

\begin{figure}[!ht]
  \centering
  \includegraphics{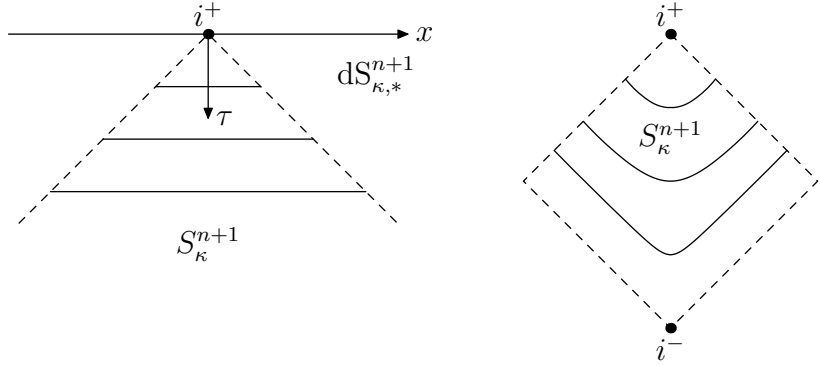}
  \caption{\textit{Left:} the domain $\dS_{\kappa,*}^{n+1}$ in which the coordinates $(\tau,x)$ are valid, together with the static patch $S_\kappa^{n+1}$, bounded by $|x|=\tau$, and level sets of the function $\tau$ (or $t_*$) within $S_\kappa^{n+1}$. \textit{Right:} Penrose diagram of the static patch, together with the same level sets.}
  \label{FigdStstar}
\end{figure}

The coordinates $(t_*,\rho,\theta)$ are valid in the same set $\dS_*^{n+1}$ in which $(\tau,x)$ are valid. Observe that on the subset $\{x_M^1+t_M=0\}\subset\dS_\kappa^{n+1}$, we have $|x_M'|=\kappa^{-1}$; it follows that in $t_M\leq 0$, $\dS_*^{n+1}$ coincides with the static patch corresponding to the point $i^-$, while in $t_M\geq 0$, $\dS_*^{n+1}$ is the complement of the static patch corresponding to the antipodal point of $i^+_0$ as a point on future infinity (that is, $(\pi,-i^+_0)$ in the Einstein universe).

\begin{rmk}
  Writing \eqref{EqdSStaticCoordChange2} as $X=\frac{x}{\kappa\tau}$ exhibits $(\tau,X)$ as coordinates near the interior of the front face of the (homogeneous) blowup of $[0,\infty)_\tau\times\R_x^n$ at $(0,0)$.
\end{rmk}

\subsection{Estimates on resonance widths via general relativity}
\label{s:rela}

We recall that hyperbolic space \eqref{EqHyp} is an Einstein metric, $\Ric(g_\kappa)=-(n-1)\kappa^2 g_\kappa$, with scalar curvature $R_{g_\kappa}=-n(n-1)\kappa^2$. Upon setting $\rho=\kappa^{-1}\tanh(\kappa r)\in(0,\kappa^{-1})$, this becomes
\[
  g_\kappa = \frac{d\rho^2}{(1-\kappa^2\rho^2)^2} + \frac{\rho^2}{1-\kappa^2\rho^2} g_{\Sph^{n-1}},
\]
which is the Klein model of hyperbolic space.

\begin{rmk}
\label{rmk:even}
  The coordinate change $\rho=\frac{2 r_0}{1+\kappa^2 r_0^2}$, $z=r_0\omega$, expresses the hyperbolic metric as
  \[
    g_\kappa = \frac{4}{(1-\kappa^2|z|^2)^2}\,dz^2.
  \]
  For $\kappa=1$, this is an asymptotically hyperbolic metric in the sense explained in \S\ref{s:mercont} if we take e.g.\ $x=1-\kappa^2|z|^2$ as the defining function of the conformal boundary. Note that
  \[
    x^2 = \frac{4 r_0^2}{\rho^2} (1-\kappa^2\rho^2),
  \]
  hence smooth functions on the even compactification are precisely those functions which are smooth in $(1-\kappa^2\rho^2)$.
\end{rmk}

Recall from \eqref{eq:tildegkappa} the static Lorentzian metric $\tilde g_\kappa$ on $\R_t\times\HH_\kappa^n$: this metric is conformal to the static de~Sitter metric \eqref{EqdSStaticMetric}, namely
\begin{equation}
\label{eq:HindS}
  (1-\kappa^2\rho^2)\tilde g_\kappa = g_{S_\kappa^{n+1}}
\end{equation}
upon identifying the coordinate systems $(t,\rho,\theta)$ on $\R_t\times\HH_\kappa^n$ and $S_\kappa^{n+1}$.

Returning to the analysis of scattering resonances on hyperbolic space, we first discuss the case with no obstacle present. Thus, suppose $\tilde v$ is a resonant state of $P_\kappa$,
\[
  \tilde v=(1-\kappa^2\rho^2)^{\frac{n-1}{4}-\frac{i\lambda}{2\kappa}}v,
\]
where is $v$ smooth on the even compactification $(\overline{\HH_\kappa^n})_{\mathrm{even}}$ of $\HH_\kappa^n$ by Theorem~\ref{t:mercont}, that is, $v$ extends to a smooth function of $(1-\kappa^2\rho^2)$ for $0<\rho\leq\kappa^{-1}$ -- see Remark~\ref{rmk:even}. Thus, $\tilde v$ solves
\[
  e^{i\lambda t}\Bigl(\Box_{\tilde g_\kappa} + \Bigl(\frac{n-1}{2}\Bigr)^2\kappa^2\Bigr) e^{-i\lambda t}\tilde v = 0.
\]
Put
\begin{equation}
\label{EqHwtu}
  \tilde u:=(1-\kappa^2\rho^2)^{-\frac{i\lambda}{2\kappa}}e^{-i\lambda t}v = e^{-i\lambda t_*}v,
\end{equation}
where we use the function $t_*$ defined in \eqref{EqdSStaticTStar}; then $\tilde u$ is a smooth function on $S_\kappa^{n+1}$ which extends smoothly across the boundary of $S_\kappa^{n+1}$ in $t\geq 0$, and in fact $\tilde u$ extends smoothly to the region of validity $\dS_{\kappa,*}^{n+1}$ of the coordinates $(t_*,\rho,\theta)$. Moreover, it solves
\begin{equation}
\label{EqHWave}
  (1-\kappa^2\rho^2)^{-1}(1-\kappa^2\rho^2)^{-\frac{n-1}{4}}\Bigl(\Box_{\tilde g_\kappa} + \Bigl(\frac{n-1}{2}\Bigr)^2\kappa^2\Bigr)(1-\kappa^2\rho^2)^{\frac{n-1}{4}}\tilde u = 0,
\end{equation}

\begin{rmk}
  Note that $1-\kappa^2\rho^2=\cosh(\kappa r)^{-2}$, hence we can also write
  \[
    g_{S_\kappa^{n+1}} = \cosh(\kappa r)^{-2}(-dt^2+dr^2) + \kappa^{-2}\tanh(\kappa r)^2 g_{\Sph^{n-1}}.
  \]
  Compare this with Lemma~\ref{l:conj}.
\end{rmk}

Recall now the transformation of a wave operator under conformal transformations: if $(M,g)$ is an $(n+1)$-dimensional Lorentzian manifold, then
\begin{equation}
\label{EqHwConf}
  e^{-2\phi}e^{-\frac{n-1}{2}\phi}\Bigl(\Box_g - \frac{n-1}{4n}R_g\Bigr)e^{\frac{n-1}{2}\phi} = \Box_{e^{2\phi}g} - \frac{n-1}{4n}R_{e^{2\phi}g}.
\end{equation}
Applying this to equation~\eqref{EqHWave}, with $e^{2\phi}=1-\kappa^2\rho^2$, $g=\tilde g_\kappa$, for which we indeed have $\frac{(n-1)R_g}{4 n}=-(\frac{n-1}{2})^2\kappa^2$, we find
\begin{equation}
\label{EqHWave2}
  \Bigl(\Box_{g_{S_\kappa^{n+1}}}-\frac{n^2-1}{4}\kappa^2\Bigr)\tilde u = 0.
\end{equation}

Let now $\cO$ denote a star-shaped obstacle in $\HH_\kappa^n$ with smooth boundary. If $\lambda\in\C$ is a resonance of $P_\kappa$, then an associated resonant state $\tilde v$ on $\HH_\kappa^n$ with Dirichlet boundary conditions on $\pa\cO$ is a function $\tilde v$ as above which in addition satisfies $\tilde v|_{\pa\cO}=0$. Thus, the function $\tilde u$ defined in \eqref{EqHwtu} solves equation \eqref{EqHWave2} and satisfies
\begin{equation}
\label{EqHDirichlet}
  \tilde u|_{\pa\tilde\cO} \equiv 0,\quad \tilde\cO:=\R_{t_*}\times\cO.
\end{equation}
For any non-trivial resonant state $\tilde v$, the function $\tilde u$ must be non-constant on the level sets of $t_*$ in the static patch $S_\kappa^{n+1}=\{\rho<\kappa^{-1}\}$. Thus, in order to obtain a lower bound on $|\Im\lambda|$, it suffices to prove exponential decay (in $t_*$) of spatial derivatives of $\tilde u$ in $S_\kappa^{n+1}$. To state this precisely, we use the coordinates $t_*$ and $X=r\theta\in\R^n$:

\begin{lemm}
\label{LemmaHDecayToRes}
  Suppose $\alpha>0$ is such that for all solutions $\tilde u$ of equation~\eqref{EqHWave2} defined in $S_\kappa^{n+1}\cap\{t_*\geq 0\}$, smooth up to the cosmological horizon $\pa S_\kappa^{n+1}=\{|X|=\kappa^{-1}\}$, and satisfying the Dirichlet boundary condition \eqref{EqHDirichlet}, there exists a constant $C$ such that
  \begin{equation}
  \label{EqHDecayToResEst}
    \int_{|X|<\kappa^{-1}} |\partial_X\tilde u|^2\,dX \leq C e^{-2\alpha t_*},\quad t_*>0.
  \end{equation}
  Then all resonances $\lambda$ of $P_\kappa$ satisfy
  \[
    \Im\lambda\leq-\alpha.
  \]
\end{lemm}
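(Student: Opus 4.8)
The plan is to argue by contraposition: given a resonance $\lambda$ of $P_\kappa$ with associated resonant state $\tilde v$, we produce from it a solution $\tilde u$ of \eqref{EqHWave2} in $S_\kappa^{n+1}$ satisfying the Dirichlet condition \eqref{EqHDirichlet}, and we show that the hypothesized decay \eqref{EqHDecayToResEst} forces $\Im\lambda\leq-\alpha$. The key object is already constructed in the discussion preceding the lemma: recall that by Theorem~\ref{t:mercont} a resonant state has the form $\tilde v=(1-\kappa^2\rho^2)^{\frac{n-1}{4}-\frac{i\lambda}{2\kappa}}v$ with $v$ smooth on the even compactification, and that the function
\[
  \tilde u = e^{-i\lambda t_*}v
\]
of \eqref{EqHwtu} solves \eqref{EqHWave2}, is smooth up to (and across) the cosmological horizon $\{|X|=\kappa^{-1}\}$, and satisfies $\tilde u|_{\partial\tilde\cO}=0$. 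Note $v$ is independent of $t_*$, so $\tilde u$ is of the form (oscillatory exponential in $t_*$)$\times$($t_*$-independent function). This is the solution to which we apply the hypothesis.

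First I would check that $\tilde u$ is admissible in the lemma: it is defined on all of $S_\kappa^{n+1}$, in particular on $S_\kappa^{n+1}\cap\{t_*\geq 0\}$, it is smooth up to $\partial S_\kappa^{n+1}$ (this is exactly the content of the extension statement following \eqref{EqHWave}, inherited from smoothness of $v$ on the even compactification and Remark~\ref{rmk:even}), and it satisfies the Dirichlet condition. Hence \eqref{EqHDecayToResEst} applies to $\tilde u$. Second, I would compute the left-hand side of \eqref{EqHDecayToResEst} explicitly for this particular $\tilde u$: since $\partial_X\tilde u = e^{-i\lambda t_*}\partial_X v$ and $v$ does not depend on $t_*$,
\[
  \int_{|X|<\kappa^{-1}} |\partial_X\tilde u|^2\,dX = e^{2(\Im\lambda)t_*}\int_{|X|<\kappa^{-1}} |\partial_X v|^2\,dX = e^{2(\Im\lambda)t_*}\cdot c,
\]
where $c:=\int_{|X|<\kappa^{-1}}|\partial_X v|^2\,dX$. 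Third, I must show $c>0$, i.e. that $v$ is non-constant in $|X|<\kappa^{-1}$; this is precisely the remark in the text that for a non-trivial resonant state $\tilde u$ (equivalently $v$) cannot be constant on the level sets of $t_*$ inside the static patch — if $\partial_X v\equiv 0$ then $v$ is constant there, and combined with the Dirichlet condition $v|_{\partial\cO}=0$ one gets $v\equiv 0$ in the static patch, and then by unique continuation for the elliptic resonance equation (or directly from the ODE analysis, since $\tilde v$ solves a second-order equation and vanishes to infinite order at the horizon) $\tilde v\equiv 0$, contradicting non-triviality. Finally, comparing $e^{2(\Im\lambda)t_*}c\leq C e^{-2\alpha t_*}$ for all $t_*>0$ with $c>0$ forces $2\Im\lambda\leq -2\alpha$, i.e. $\Im\lambda\leq-\alpha$.

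The only genuinely delicate point is the strict positivity $c>0$, i.e. ruling out that a non-trivial resonant state becomes constant (and hence, via the Dirichlet data, zero) on the static patch. The clean way to see this is: $v$ solves the conjugated equation \eqref{EqHWave2} (an equation regular up to the horizon), $v|_{\partial\cO}=0$, and $v$ is real-analytic in the interior of $S_\kappa^{n+1}\setminus\tilde\cO$ since the coefficients are; if $v$ were constant on an open subset it would vanish identically on the connected region $S_\kappa^{n+1}\setminus\tilde\cO$, hence $\tilde v\equiv 0$ there, and since $\tilde v$ is determined on $\HH_\kappa^n\setminus\cO$ by its restriction to (any open subset of) the region $\rho<\kappa^{-1}$ via the original elliptic equation $(P_\kappa-\lambda^2)\tilde v=0$ and unique continuation, $\tilde v\equiv 0$, a contradiction. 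Everything else is a direct substitution; the conceptual work — identifying $\tilde u$, its smoothness up to the horizon, and the reduction to the decay estimate — has already been done in the paragraphs preceding the lemma statement, so the proof itself is short.
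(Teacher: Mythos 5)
Your proof is correct and follows the approach the paper itself sketches in the paragraph immediately before the lemma (the paper gives no separate formal proof of this statement): apply the hypothesized decay to $\tilde u=e^{-i\lambda t_*}v$, factor out $e^{2(\Im\lambda)t_*}$ from the integral, and rule out $\int_{|X|<\kappa^{-1}}|\partial_X v|^2\,dX=0$. The unique-continuation/real-analyticity discussion in your last paragraph is unnecessary overhead: since the spatial slice $\{|X|<\kappa^{-1}\}$ of the static patch already covers all of $\HH^n_\kappa$, $\partial_X v\equiv 0$ there together with $v|_{\partial\cO}=0$ gives $v\equiv 0$ on $\HH^n_\kappa\setminus\cO$ outright, hence $\tilde v=(1-\kappa^2\rho^2)^{\frac{n-1}{4}-\frac{i\lambda}{2\kappa}}v\equiv 0$.
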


\begin{proof}[Proof of Theorem~\ref{t:1} for all $n\geq 2$]
We will obtain the estimate~\eqref{EqHDecayToResEst} by relating equation~\eqref{EqHWave2} to yet another wave equation via a conformal transformation. Namely, in the coordinates $(\tau,x)\in(0,\infty)\times\R^n$ defined in \eqref{EqdSStaticCoordChange2}, we have $(\kappa\tau)^2 g_{S_\kappa^{n+1}} = g_M := -d\tau^2 + dx^2$, hence the rescaled function $u = (\kappa\tau)^{-\frac{n-1}{2}}\tilde u$ satisfies the equation $\Box_{g_M}u = 0$ with Dirichlet boundary conditions on
\begin{equation}
\label{eq:minkobs}
  \tilde\cO = \Bigl\{ (\tau,x) \colon \frac{x}{\kappa\tau} \in \cO \Bigr\}.
\end{equation}
Note that for $\tilde u$ defined in $S_\kappa^{n+1}\cap\{t_*\geq 0\}$, the function $u$ is defined in $|x|<\tau<\kappa^{-1}$. Notice however that the Cauchy data $(u_0,u_1)$ of $u$ at $\tau=\kappa^{-1}$ can be extended to compactly supported data $(w_0,w_1)$ on $\{\tau=\kappa^{-1},\ |x|<2\tau\}$ whose $H^1$ norm is controlled by a uniform constant times the $H^1$ norm of $(u_0,u_1)$, and the solution $w$ of the Cauchy problem $\Box_{g_M}w=0$ with Cauchy surface $\tau=\kappa^{-1}$ exists (and is smooth) on $\tau^{-1}((0,\kappa^{-1}])$ and equals $u$ in $S_\kappa^{n+1}\cap\{0<\tau\leq\kappa^{-1}\}$, the domain of dependence of $\{|x|<\tau,\ \tau=\kappa^{-1}\}$. See Fig.~\ref{FigHWave}.

\begin{figure}[!ht]
  \centering
  \includegraphics{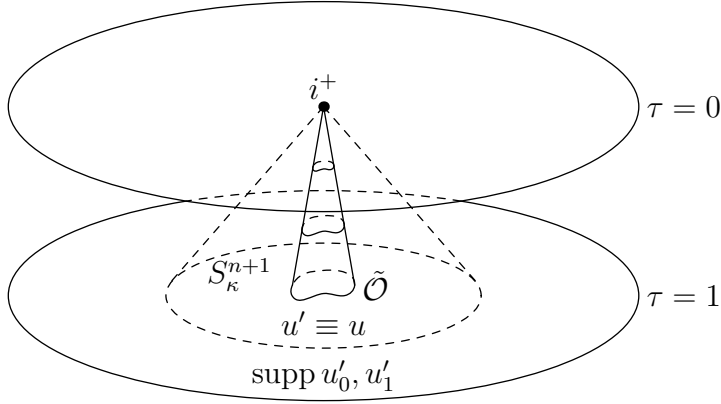}
  \caption{The obstacle $\tilde\cO=\R_{t_*}\times\cO$ in the static de~Sitter patch $S_\kappa^{n+1}$, which itself is embedded in the upper half plane model $\dS_{\kappa,*}^{n+1}$, which in turn is conformally diffeomorphic to a half space $\tau>0$ of Minkowski space with the metric $-d\tau^2+dx^2$.}
  \label{FigHWave}
\end{figure}

Without the obstacle, $w$ would satisfy arbitrary order energy estimates uniformly up to $\tau=0$ and beyond. With the obstacle present, we can only control first order energies when using the future timelike vector field $-\pa_\tau$; note that this vector field points out of $\tilde\cO$ at the boundary $\pa\tilde\cO$ of the obstacle. Since the latter is smooth in $\tau>0$, we have
\[
  \int_{\tau=\tau_0} |\partial_\tau w|^2 + |\partial_x w|^2 \,dx \leq \int_{\tau=1} |\partial_\tau w|^2 + |\partial_x w|^2 \,dx \leq C,\ \ \ \tau_0>0;
\]
the key is that this holds \emph{uniformly} for all $\tau_0>0$. Dropping the $\tau$-derivative on the left, restricting the domain of integration to $|x|<\tau_0$, and using $\pa_x=(\kappa\tau)^{-1}\pa_X$ as well as $dx=(\kappa\tau)^n\,dX$, this gives
\begin{equation}
\label{EqHEstimate}
  C \geq (\kappa\tau)^{n-2}\int_{|X|<\kappa^{-1}} |\partial_X w|^2\,dX = (\kappa\tau)^{-1} \int_{|X|<\kappa^{-1}} |\partial_X\tilde u|^2\,dX.
\end{equation}
Since $\tau=e^{-t_*}$, the estimate~\eqref{EqHDecayToResEst} holds with $\alpha=\kappa/2$, giving the universal lower bound $\kappa/2$ for the resonance width and thus proving Theorem~\ref{t:1}.
\end{proof}

We remark that all resonances with $\Im\lambda=-\kappa/2$ must be semisimple, as otherwise there would be solutions $\tilde u$ with $L^2$ norm of $\tilde u_X$ bounded from below by $e^{-\kappa t_*/2}t_*$, contradicting \eqref{EqHEstimate}.

\begin{rmk}
  The estimate~\eqref{EqHDecayToResEst} is in fact \emph{false} for $\alpha>\kappa/2$; this is related to the fact that $H^1$ is the threshold regularity for radial point estimates at the decay rate $\kappa/2$, see \cite[Proposition~2.1]{HV15}, and says that control of $H^1$ alone is not sufficient for proving a lower bound for resonance widths which is better than $\kappa/2$. Indeed, take $\lambda\in\C$ with $\Im\lambda=-\kappa/2-\eps$, $\eps>0$ small, which is \emph{not} a resonance of $P_\kappa$. Define
  \[
    \tilde v' := (1-\kappa^2\rho^2)^{\frac{n-1}{4}+\frac{i\lambda}{2\kappa}}\chi(1-\kappa^2\rho^2),
  \]
  where $\chi\in\CI([0,1))$, $\chi(0)=1$, is chosen such that $(P_\kappa-\lambda^2)\tilde v'$ vanishes to infinite order at $1-\kappa^2\rho^2=0$. Let then
  \[
    \tilde v = \tilde v' - R_\kappa(\lambda)\bigl((P_\kappa-\lambda^2)\tilde v'\bigr),
  \]
  which solves $(P_\kappa-\lambda^2)\tilde v=0$. Since $R_\kappa(\lambda)$ produces an outgoing function, while $\tilde v'$ is ingoing, we have $\tilde v\neq 0$. Let
  \[
    v=(1-\kappa^2\rho^2)^{-\frac{n-1}{4}+\frac{i\lambda}{2\kappa}}\tilde v,
  \]
  then $v\in H^1((\overline{\HH_\kappa^n})_{\mathrm{even}})$ by our assumption on $\lambda$. The function $\tilde u:=e^{-i\lambda t_*}v$ solves equation~\eqref{EqHWave2}, and satisfies the estimate~\eqref{EqHDecayToResEst} only when $\alpha\leq\kappa/2+\eps$.
\end{rmk}

\subsection{Non-trapping property of star-shaped obstacles}
\label{s:nontrap}

In order to justify the relationship between resonance widths and exponential energy decay for waves in the exterior of star-shaped obstacles, we prove:

\begin{prop}
  Let $\mathcal O\subset\HH^n$ be star-shaped. Then $\HH^n\setminus\mathcal O$ is non-trapping.
\end{prop}
\begin{proof}
  As in the proof of the analogous result in Euclidean space given in~\cite{PetkovStoyanov}, we will identify a quantity which is monotone along suitably rescaled broken geodesics. Suppose $\mathcal O\subset B(0,R)$, $R>0$. As in the reference, it suffices to prove that if $\gamma\colon\R\to\HH^n\setminus\mathcal O$ is a unit speed broken geodesic with $\gamma(0)\in B(0,R)$ all of whose intersections with $\partial\mathcal O$ are \emph{transversal}, then $\gamma(t)\notin B(0,R)$ for all sufficiently large $t>0$. Recall the definition~\eqref{eq:tildegkappa} of the Lorentzian metric $\tilde g_1$, and note that the curve $\tilde\gamma(t):=(t,\gamma(t))$ is a broken null-geodesic on $(\R_t\times(\HH^n\setminus\mathcal O),\tilde g_1)$. Now, images of null-geodesics are invariant under conformal changes of the metric, so let us consider the image of $\tilde\gamma$ as the image of a broken null-geodesic $\gamma_M(t)$ in \emph{Minkowski space} $\R_\tau\times\R^n_x$ with obstacle $\tilde{\mathcal O}$, as discussed around~\eqref{eq:minkobs} and in Figure~\ref{FigHWave}. Writing $\gamma_M(t)=(\tau_M(t),x_M(t))$, we may assume without loss that $\tau_M(0)=-1$, $\tau_M'(0)=1$, and thus $|x_M'(0)|=1$; we then have $|x_M(0)|\leq c$ for some $c<1$. By reparameterizing $\gamma_M$ between any two reflection points, we can arrange that $\tau_M(t)=-1+t$, hence also $|x_M'(t)|=1$ for all $t<1$. If we define $f(t):=x_M(t)\cdot x_M'(t)$ using the Euclidean inner product, then $f'(t)=1$ whenever $\gamma_M(t)\notin\partial\tilde{\mathcal O}$, while at a reflection point $\gamma_M(t_0)$, we have
  \begin{equation}
  \label{eq:refl}
    f(t_0+0)\geq f(t_0-0),
  \end{equation}
  as we will prove momentarily. Therefore,
  \[
    f(t) \geq f(0) + t \geq -c+t \geq \eps > 0
  \]
  for $\eps\in(0,1-c)$ and $t>c+\eps$. Since $|x'_M(t)|=1$, this forces $x_M(t)>\eps$ for such $t$. Therefore, $\gamma_M(t)$ lies beyond the cosmological horizon of the static de~Sitter patch $S_1^{n+1}$ for $t$ close to $1$, which means that $\gamma_M$ is not trapped.

  To prove~\eqref{eq:refl}, we consider (after rescaling) a point $p=(-1,x)\in\partial\tilde{\mathcal O}\subset\R_\tau\times\R_x^n$, $|x|<1$. Within $\{\tau=-1\}\cong\R^n$, denote the outward pointing unit normal of $\mathcal O\cong\tilde{\mathcal O}\cap\{\tau=-1\}$ by $n\in\R^n$, so $T_p\tilde{\mathcal O}$ is spanned by $n^\perp\subset\R^n\subset\R^{1+n}$ and $(-1,x)$. A (Lorentzian) normal vector to $\tilde(\mathcal O)$ at $p$ is thus $\nu:=(-n\cdot x,n)$. Given an inward pointing vector $v_-:=\gamma_M'(t_0-0)=(1,w)$, $|w|=1$, we thus have $0\leq g_M(\nu,(1,w))=-n\cdot(x+w)$. The reflection of $v_-$ is
  \begin{align*}
    v'_+ = v_- - \frac{2 g_M(\nu,v_-)}{g_M(\nu,\nu)}\nu = (1-(n\cdot x)^2)^{-1}\bigl(&1+(n\cdot x)^2+2(n\cdot w)(n\cdot x), \\
    &\quad w-(n\cdot x)^2 w-2(n\cdot(x+w))n \bigr).
  \end{align*}
  Let $v_+=v'_+/\lambda=:(1,w')$, where $\lambda$ is the first component of $v'_+$; then $\gamma_M'(t_0+0)=v_+$, and we need to verify $x\cdot w'\geq x\cdot w$, i.e.\ after clearing denominators and simplifying,
  \[
    (n\cdot x)(1+x\cdot w)\bigl(n\cdot(x+w)\bigr) \leq 0,
  \]
  which indeed holds, since $n\cdot x\geq 0$ since $\mathcal O$ is star-shaped.
\end{proof}

\section{Small obstacles and Euclidean resonances}
\label{s:smalle}

Let $n\geq 3$ be odd. Suppose $\cO\subset\R^n$ is a compact domain with smooth boundary. We can then identify $\cO$ with a smooth domain $\HH^n_\kappa$ via the identification $\HH^n_\kappa\cong\R^n$ of smooth manifolds in \eqref{EqHyp}. Formally taking the limit $\kappa\to 0$, we denote by
$
  g_0 = dr^2 + r^2 h
$ the usual Euclidean metric on $\R^n=:\HH_0^n$. 
We recall that for $\kappa\geq 0$,  the operator $ P_\kappa $ given in
\eqref{EqHypOp} 
is self-adjoint with Dirichlet boundary conditions, that is, with domain $(H^2\cap H^1_0)(\HH_\kappa^n\setminus\cO)$, where we use the metric $g_\kappa$ to define Sobolev spaces. As reviewed in \S\ref{s:mercont}, the resolvent $(P_\kappa-\lambda^2)^{-1}$ admits a meromorphic continuation from $\Im\lambda\gg 0$ to $\C_\lambda$; we denote the set of its poles, counted with multiplicity, by $\Res(\cO,\kappa)$.

In this section we will prove a precise version of Theorem \ref{ThmMain1}:
\begin{thm}
\label{ThmMain}
  We have $\Res(\cO,\kappa)\to\Res(\cO,0)$ locally uniformly, with multiplicities, as $\kappa\to 0$.
More precisely, the set of accumulation 
points of $ \Res ( \cO, \kappa ) $ is contained in $ \Res ( \cO, 0 ) $, and
for any $ K \Subset \CC $  there exist $ r_0 $ and $ \kappa_0 $ such that if $ \lambda_0 \in \Res ( \cO , 0 ) \cap K 
$ has multiplicity $ m $ then for any $ \kappa < \kappa_0 $, 
\[
 \Res ( \cO, \kappa ) \cap D ( \lambda_0 , r_0 ) = \{ \lambda_j ( \kappa ) \}_{j=1}^m , \  \ \lim_{\kappa \to 0 } \lambda_j ( \kappa ) = \lambda_0.
\]
\end{thm}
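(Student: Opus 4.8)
The plan is to realize $\Res(\cO,\kappa)\cap K$, for a fixed $K\Subset\CC$, as the zero set with multiplicities of a holomorphic Fredholm determinant $D_\kappa(\lambda)$ on a \emph{fixed} Hilbert space, to prove that $D_\kappa\to D_0$ locally uniformly on $K$ as $\kappa\to 0$, and then to conclude with Rouch\'e's theorem. For the first point I would use a gluing parametrix. Fix $R_1>R_0>0$ with $\cO\Subset B(0,R_0)$; let $R^{\mathrm{int}}_\kappa(\lambda)$ be the Dirichlet resolvent of $P_\kappa$ on the \emph{fixed} compact manifold with boundary $\overline{B(0,R_1)}\setminus\cO$, a meromorphic family whose poles form a discrete set $\Sigma_\kappa\subset\CC$ (square roots of the interior Dirichlet eigenvalues) with finite-rank residues, and let $R^{\mathrm{free}}_\kappa(\lambda)$ be the free resolvent of $-\Delta_{g_\kappa}-(\tfrac{n-1}{2})^2\kappa^2$ on $\HH^n_\kappa$, which for odd $n$ and every $\kappa\geq 0$ (with $\kappa=0$ the Euclidean case on $\R^n$) is \emph{entire} in $\lambda$. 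Gluing these two pieces with cutoffs supported in $B(0,R_1)$ in the standard way (see \cite[Chapter 4]{dizzy}), one obtains a holomorphic family of trace-class operators $K_\kappa(\lambda)$ on a fixed Hilbert space, with kernel supported in a fixed compact set, such that $I+K_\kappa(\lambda)$ is invertible exactly off $\Res(\cO,\kappa)\cup\Sigma_\kappa$. A Grushin/Schur-complement step --- the effective-Hamiltonian formalism of \cite[\S4]{dizzy} --- then removes the spurious set $\Sigma_\kappa$ and produces a holomorphic family $\mathcal M_\kappa(\lambda)=I+L_\kappa(\lambda)$, $L_\kappa(\lambda)$ trace class, on a fixed space, invertible on $K$ off $\Res(\cO,\kappa)$, whose Fredholm determinant $D_\kappa(\lambda):=\det\mathcal M_\kappa(\lambda)$ is holomorphic on $K$ and vanishes at each $\lambda_0$ to exactly the order $m_\kappa(\lambda_0)$.

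The heart of the matter is the convergence $D_\kappa\to D_0$, which reduces to the convergence of the two ingredients. On the \emph{fixed} compact manifold $\overline{B(0,R_1)}\setminus\cO$ the metrics $g_\kappa$ converge to $g_0$ in $\CI$, so $P^{\mathrm{int}}_\kappa\to P^{\mathrm{int}}_0$ with convergence of the discrete spectrum and of the spectral projections; hence $R^{\mathrm{int}}_\kappa(\lambda)\to R^{\mathrm{int}}_0(\lambda)$ in operator norm (as maps $L^2\to H^2$), locally uniformly on $\CC\setminus\Sigma_0$. For the free piece one needs $\chi R^{\mathrm{free}}_\kappa(\lambda)\chi\to\chi R^{\mathrm{free}}_0(\lambda)\chi$ in $\mathcal L(L^2,H^2)$, locally uniformly on $K$, for fixed $\chi\in\CIc(\R^n)$; here the restriction to \emph{odd} $n$ is essential, since then the free resolvent kernel on $\HH^n_\kappa$ is an elementary function of $\lambda$, $\kappa$ and the geodesic distance $d_\kappa(z,z')$ --- obtained from the three-dimensional case by iterating $\tfrac{1}{\sinh(\kappa d)}\partial_d$ --- whose dependence on $\lambda\in\CC$ and whose limit as $\kappa\to 0$, namely the Euclidean outgoing kernel, are explicit. (Alternatively one could run Vasy's meromorphic-continuation construction on a fixed extended manifold with the curvature parameter $\kappa\geq 0$ entering as a smooth parameter.) Combining the two, $K_\kappa(\lambda)\to K_0(\lambda)$ in trace norm, and therefore $D_\kappa\to D_0$ locally uniformly on $K$.

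Rouch\'e's theorem then gives both assertions of the theorem. If $\lambda_\kappa\in\Res(\cO,\kappa)\cap K$ with $\lambda_\kappa\to\mu$, then $\mathcal M_0(\mu)$ cannot be invertible (else $\mathcal M_\kappa(\lambda)$ would be invertible for $\lambda$ near $\mu$ and $\kappa$ small, contradicting that $\lambda_\kappa$ is a non-invertibility point), so $\mu\in\Res(\cO,0)$; this shows the accumulation points of $\Res(\cO,\kappa)$ lie in $\Res(\cO,0)$. Conversely, fix $\lambda_0\in\Res(\cO,0)\cap K$ with $m=m_0(\lambda_0)$; enlarging $K$ slightly we may choose $r_0>0$, uniform over the finitely many such $\lambda_0$, with $\overline{D(\lambda_0,r_0)}\subset K$ containing no other element of $\Res(\cO,0)$ and $D_0\neq 0$ on $\partial D(\lambda_0,r_0)$. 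Since $D_\kappa\to D_0$ uniformly on $\overline{D(\lambda_0,r_0)}$, there is $\kappa_0$ so that for $\kappa<\kappa_0$ the function $D_\kappa$ has exactly $m$ zeros, counted with multiplicity, in $D(\lambda_0,r_0)$; these tend to $\lambda_0$ as $\kappa\to 0$ (any accumulation point being a zero of $D_0$ in $\overline{D(\lambda_0,r_0)}$, hence $\lambda_0$), and by the identity $\operatorname{ord}_\lambda D_\kappa=m_\kappa(\lambda)$ they are precisely $\Res(\cO,\kappa)\cap D(\lambda_0,r_0)$, counted with multiplicity. The main obstacle is the free-resolvent convergence in the second paragraph: reconciling the analytic continuations of the hyperbolic and Euclidean free resolvents and controlling them uniformly in the curvature as $\kappa\to 0$ (precisely where oddness of $n$ enters). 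The gluing parametrix, the identification of resonances with the zeros of $D_\kappa$ and their multiplicities, and the concluding Rouch\'e argument are all routine.
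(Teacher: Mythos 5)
Your approach is genuinely different from the paper's.  The paper replaces the resolvent by a boundary layer operator $G_\kappa(\lambda)\in\Psi^{-1}(\pa\cO)$ (entire in $\lambda$ and acting on the \emph{fixed} compact manifold $\pa\cO$), proves an identification between the resonance multiplicity $m_\kappa(\lambda)$ of \eqref{eq:multk} and the Gohberg--Sigal multiplicity of $G_\kappa$ (Lemma~\ref{LemmaHypFormula} and Proposition~\ref{PropHypBVPPoles}), and then runs operator-valued Rouch\'e for $G_\kappa(\zeta)$.  You instead build a gluing parametrix on the ambient space (interior Dirichlet resolvent plus free resolvent with cutoffs) and reduce to the scalar Rouch\'e theorem for a Fredholm determinant $D_\kappa$.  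Both routes crucially use the explicit, entire free resolvent in odd dimension (Lemma~\ref{LemmaHypFree}), and both end in an Argument Principle count.  Your route hews closer to the standard treatment of Euclidean obstacle scattering in \cite[\S 4]{dizzy} and reduces the convergence question to scalar complex analysis; the paper's layer-potential construction is arguably more economical for tracking multiplicities because the boundary operator $G_\kappa$ is entire to begin with, and the $\kappa$-dependence enters only through the (explicit) free Green's function restricted to $\pa\cO$.

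Two steps you classify as ``routine'' are precisely where the paper does most of its work, and as written they leave a gap.  First, the identity $\operatorname{ord}_\lambda D_\kappa=m_\kappa(\lambda)$ is not a direct citation: it requires showing that the order of vanishing of the determinant equals the rank of $\oint_\lambda R_\kappa(\zeta)\,d\zeta$, which for non-semisimple poles involves the nilpotent structure of the Laurent coefficients of $R_\kappa$.  This is exactly what Lemma~\ref{LemmaHypFormula} and Proposition~\ref{PropHypBVPPoles} establish (via the Hankel-matrix computation and the injectivity of the layer potential), and the analogous statement for your determinant needs a parallel argument.  Second, and more seriously, your Grushin/Schur-complement step removing the spurious poles $\Sigma_\kappa\subset\R$ must be done \emph{uniformly in $\kappa$}: $\Sigma_\kappa$ moves with $\kappa$, so the resulting family $\mathcal M_\kappa$ has to depend continuously on $\kappa$ near $\R\cap K$ for the claimed convergence $D_\kappa\to D_0$ to make sense there, and \cite[\S 4]{dizzy} builds the parametrix for a single fixed operator.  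The paper faces the analogous difficulty (for real $\lambda$ the single layer operator can fail to be injective at interior Dirichlet eigenvalues, and for $\kappa>0$ Lemma~\ref{LemmaAbsenceReal} does not rule out $\lambda=0$) and resolves it by a dedicated argument using the combined single/double-layer ansatz \eqref{EqMainDlAnsatz} to prove the real-axis statement \eqref{EqMainConvReal}.  If you replace the interior Dirichlet resolvent with a $\kappa$-independent complex-absorbing-potential modification pushing $\Sigma_\kappa$ uniformly into $\{\Im\lambda\ll 0\}$, or supply a separate argument excluding resonances of $P_\kappa$ near $\R\cap K$ for small $\kappa$, your proof closes; as written the convergence $D_\kappa\to D_0$ near the real axis is not justified.
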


We begin by computing the kernel of the free resolvent
\[
  R_\kappa^0(\lambda) = \left(-\Delta_{g_\kappa}-\left(\tfrac{n-1}{2}\right)\kappa^2-\lambda^2\right)^{-1}.
\]

\begin{lemm}
\label{LemmaHypFree}
  For fixed $y$, the resolvent kernel $R_\kappa^0(\lambda;x,y)$ of $\HH^n_\kappa$ is $L^1_\loc$ in $x$. It only depends on the geodesic distance $d_\kappa(x,y)$ between $x$ and $y$, and is given explicitly by
  \[
    R_\kappa^0(\lambda;x,y) = -\frac{1}{2 i\lambda}\Bigl(-\frac{1}{2\pi s_\kappa}\pa_r\Bigr)^{\frac{n-1}{2}} e^{i\lambda r} \Bigr|_{r=d_\kappa(x,y)}.
  \]
  In particular, $R_\kappa^0(\lambda)$ is entire in $\lambda$.
\end{lemm}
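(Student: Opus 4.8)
The plan is to exploit the homogeneity of $\HH^n_\kappa$ to reduce to a one-dimensional problem, and then to build the kernel up in steps of two dimensions, each step being an application of the operator $\mathcal A_\kappa:=-\tfrac{1}{2\pi s_\kappa}\pa_r$ appearing in the statement. Since $R_\kappa^0(\lambda)$ is a function of $\Delta_{g_\kappa}$ it commutes with all isometries of $\HH^n_\kappa$; as the isometry group acts transitively, and the isotropy group of a point acts transitively on the unit sphere of the tangent space, the Schwartz kernel $R_\kappa^0(\lambda;x,y)$ depends only on $r:=d_\kappa(x,y)$, say $R_\kappa^0(\lambda;x,y)=G_n(r)$. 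For $\Im\lambda>0$ we have $\lambda^2\notin[0,\infty)=\Spec(P_\kappa)$, so $R_\kappa^0(\lambda)$ is bounded on $L^2$ and $G_n$ is characterized by: (a) $(L_n-\lambda^2)G_n=0$ on $r>0$, where $L_n:=-s_\kappa^{-(n-1)}\pa_r s_\kappa^{n-1}\pa_r-\bigl(\tfrac{n-1}{2}\bigr)^2\kappa^2$ is the radial part of $P_\kappa$ in dimension $n$; (b) $G_n\in L^2(s_\kappa^{n-1}\,dr)$ near $r=\infty$, which — after conjugating by $s_\kappa^{(n-1)/2}$ as in \eqref{eq:k2s}, where the potential decays exponentially and solutions are asymptotic to $e^{\pm i\lambda r}$ — selects the branch behaving like $e^{i\lambda r}$; and (c) $(P_\kappa-\lambda^2)G_n(d_\kappa(\cdot,y))=\delta_y$, equivalently (given (a)) $G_n(r)=c_n r^{2-n}+o(r^{2-n})$ as $r\to0$ with $c_n=\bigl((n-2)\vol(\Sph^{n-1})\bigr)^{-1}$, since $g_\kappa$ is asymptotically Euclidean near a point. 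These properties determine $G_n$ uniquely: the solutions of (a) form a two-dimensional space, (b) cuts it to a line, and (c) fixes the remaining constant. Hence it suffices to verify that the right-hand side of the claimed formula has properties (a)--(c) for $\Im\lambda>0$. Since for odd $n\geq3$ each $\pa_r$ landing on $e^{i\lambda r}$ contributes a factor $i\lambda$, the function $\bigl(-\tfrac{1}{2\pi s_\kappa}\pa_r\bigr)^{(n-1)/2}e^{i\lambda r}$ is $e^{i\lambda r}$ times a polynomial in $\lambda$ with vanishing constant term, so the claimed formula is entire in $\lambda$; once the identity is known on $\Im\lambda>0$ it propagates to all of $\C$ by uniqueness of meromorphic continuation (Theorem~\ref{t:mercont} with $\cO=\emptyset$), and the $L^1_\loc$ assertion follows from (c) because $r^{2-n}\in L^1_\loc(\R^n)$.

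I would verify the formula by induction on the odd integer $n$, raising the dimension by $2$. The base case $n=1$ is the elementary computation that $(-\pa_r^2-\lambda^2)^{-1}$ on $(\R,dr^2)=\HH^1_\kappa$ has kernel $\tfrac{i}{2\lambda}e^{i\lambda|x-y|}=-\tfrac{1}{2i\lambda}e^{i\lambda r}$, which is the claimed formula for $n=1$. The inductive step rests on the intertwining identity
\[
  (L_{n+2}-\lambda^2)\circ\tfrac{1}{s_\kappa}\pa_r = \tfrac{1}{s_\kappa}\pa_r\circ(L_n-\lambda^2)
\]
on radial functions, proved by direct computation using $s_\kappa''=\kappa^2 s_\kappa$: commuting $\tfrac{1}{s_\kappa}\pa_r$ past $\pa_r^2+(n-1)\tfrac{s_\kappa'}{s_\kappa}\pa_r$ creates an extra term $-n\kappa^2 s_\kappa^{-1}(\cdot)'$, which is cancelled exactly by the change of the zeroth-order constant $\bigl(\tfrac{n-1}{2}\bigr)^2\kappa^2$ to $\bigl(\tfrac{n+1}{2}\bigr)^2\kappa^2$. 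Granting this: if $G_n$ has properties (a)--(c) in dimension $n$, then $\mathcal A_\kappa G_n$ solves $(L_{n+2}-\lambda^2)(\mathcal A_\kappa G_n)=0$ on $r>0$, retains the decaying asymptotics at infinity (since $\mathcal A_\kappa$ lowers the leading $s_\kappa$-power by one, matching the shift of dimension), and near $r=0$ has leading term $\mathcal A_\kappa(c_n r^{2-n})=\tfrac{(n-2)c_n}{2\pi}\,r^{-n}+\cdots$. Because $\tfrac{(n-2)c_n}{2\pi}=\bigl(2\pi\vol(\Sph^{n-1})\bigr)^{-1}=c_{n+2}$ — using $\vol(\Sph^{n+1})=\tfrac{2\pi}{n}\vol(\Sph^{n-1})$ — and $\mathcal A_\kappa G_n$ is no more singular than $r^{2-(n+2)}$ at the origin, we conclude $\mathcal A_\kappa G_n=G_{n+2}$. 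Iterating from $n=1$ gives $G_{2k+1}=\mathcal A_\kappa^k\bigl(-\tfrac{1}{2i\lambda}e^{i\lambda r}\bigr)=-\tfrac{1}{2i\lambda}\bigl(-\tfrac{1}{2\pi s_\kappa}\pa_r\bigr)^{k}e^{i\lambda r}$, as claimed.

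The difficulty here is bookkeeping rather than conceptual. One has to carry out the intertwining computation carefully enough to see the curvature contributions cancel — this is the only place where the precise value $\bigl(\tfrac{n-1}{2}\bigr)^2\kappa^2$ of the zeroth-order part of $P_\kappa$ is used — and one has to track the numerical normalization through the induction, i.e.\ confirm that the constant $\tfrac{1}{2\pi}$ in $\mathcal A_\kappa$ is precisely what converts the Euclidean fundamental-solution constant $c_n$ in dimension $n$ into $c_{n+2}$ in dimension $n+2$. A secondary point to dispatch is that $\mathcal A_\kappa G_n$, whose singularity at $r=0$ is of the borderline order $r^{2-(n+2)}$, is a locally integrable function to which $P_\kappa-\lambda^2$ applies producing an honest multiple of $\delta_y$, with no derivatives of $\delta_y$; this follows from the asymptotic expansion of $G_n$ at $r=0$, whose most singular term is $c_n r^{2-n}$.
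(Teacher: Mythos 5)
Your argument is correct and shares the backbone of the paper's proof: the symmetric-space reduction to a radial problem, the same base case $f_0=-\tfrac{1}{2i\lambda}e^{i\lambda r}$, and the same intertwining relation (your $(L_{n+2}-\lambda^2)\circ s_\kappa^{-1}\pa_r = s_\kappa^{-1}\pa_r\circ(L_n-\lambda^2)$ is, after substituting $n=2j+1$, precisely the paper's $Q_{j+1}\circ s_\kappa^{-1}\pa_r=s_\kappa^{-1}\pa_r\circ Q_j$). Where you diverge is in how the $\delta_y$-condition is verified at the inductive step. The paper applies $P_{j+1}$ to $f_{j+1}$ \emph{directly} and computes the distributional pairing $\la P_{j+1}s_\kappa^{-1}\pa_r f_j,\varphi\ra$ against radial test functions, transporting the intertwining through a chain of integrations by parts that tracks the volume forms $\vol(\Sph^{2j})$, $\vol(\Sph^{2j+2})$ and ends with $-2\pi\varphi(0)$; this is self-contained and never needs the asymptotic expansion of $G_n$ at $r=0$ or the fact that the ODE has no nontrivial solution that is both regular at $0$ and decaying at $\infty$. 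You instead characterize $G_n$ by (ODE) $+$ (outgoing/decaying at $\infty$) $+$ (leading singularity $c_n r^{2-n}$) and show that $\mathcal A_\kappa$ preserves all three, matching constants via $\vol(\Sph^{n+1})=\tfrac{2\pi}{n}\vol(\Sph^{n-1})$ — a tidy identity which is indeed correct and produces the same normalization $\tfrac{1}{2\pi}$. That approach is perfectly sound and arguably more conceptual, but it front-loads two facts you acknowledge without proving in full: (i) that for $\Im\lambda>0$ the regular branch of the radial ODE is not in $L^2$ at infinity (so the difference of two candidates vanishes), which amounts to absence of $L^2$-eigenvalues for $P_\kappa$; and (ii) that a locally integrable radial function solving the homogeneous ODE away from $0$ with leading singularity exactly $c_{n+2}r^{-n}$ produces $\delta_y$ and not also derivatives of $\delta_y$ — this needs the next-order terms in the expansion of $G_n$ near $r=0$ to be genuinely lower order. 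Both are standard, but the paper's direct pairing computation sidesteps them entirely, which is why it opts for that route. One small caveat on your write-up: the uniqueness characterization (a)–(c) is stated for $\Im\lambda>0$, which is fine for identifying the kernel there, but for the entirety claim you should note (as you do in passing) that the right-hand side of the formula is manifestly entire and the identification extends by analytic continuation of the meromorphically continued resolvent.
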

\begin{proof}
  See \cite[\S 8.6]{Tay}; we present a direct proof, based on induction on $j=(n-1)/2\in\Z_{\geq 0}$. The asserted dependence only on $d_\kappa(x,y)$ follows from the fact that $\HH^n_\kappa$ is a symmetric space. Dropping the subscript $\kappa$, denote
  \[
    f_0(r) := -\frac{e^{i\lambda r}}{2 i\lambda},\quad f_{j+1}(r) := -\frac{1}{2\pi s_\kappa}\pa_r f_j(r).
  \]
  We will identify $f_j$, which is a function on $(0,\infty)_r$, with the function $f_j(d_\kappa(0,x))$, $x\in\HH^n_\kappa$. Since $|f_j(r)| \leq s_\kappa^{1-2 j}$ for $j\geq 1$ and $|d g_\kappa|=s_\kappa^{2 j}dr|dh|$, we have $f_j\in L^1_\loc(\HH^n_\kappa)$.

  Fix $\lambda\in\C$ and $\kappa\geq 0$. Denote $P_j := -\Delta_{g_{\HH^{2 j+1}_\kappa}} - j^2\kappa^2 - \lambda^2$, and write
  \[
    Q_j := s_\kappa^{-2 j}D_r s_\kappa^{2 j}D_r - j^2\kappa^2 - \lambda^2
  \]
  for its radial part, which is an operator on $(0,\infty)$. Now for $j=0$, $f_0$ indeed solves $P_0 f_0=\delta_0$. For the inductive step, we note the intertwining relation
  \[
    Q_{j+1}\circ s_\kappa^{-1}\pa_r = s_\kappa^{-1}\pa_r\circ Q_j,
  \]
  which is verified by direct calculation. In verifying that $P_{j+1}f_{j+1}=\delta_0$, we note that, due to the spherical symmetry of $f_{j+1}$, it suffices to check this for radial test functions $\varphi\in\CIc(\HH_\kappa^{2(j+1)+1})$; but for such $\varphi$, we compute the distributional pairing
  \begin{align*}
    \la P_{j+1}s_\kappa^{-1}\pa_r f_j, \varphi \ra_{L^2(\HH^{2(j+1)+1}_\kappa)} & = \vol(\Sph^{2 j+2})\la Q_{j+1}s_\kappa^{-1}\pa_r f_j, \varphi \ra_{L^2(\R_+;s_\kappa^{2(j+1)}dr)} \\
      &= \vol(\Sph^{2 j+2}) \la s_\kappa^{-1}\pa_r Q_j f_j,\varphi\ra_{L^2(\R_+;s_\kappa^{2(j+1)}dr)} \\
      &= -\vol(\Sph^{2 j+2})\la Q_j f_j, s_\kappa^{-2 j}\pa_r s_\kappa^{2 j+1}\varphi \ra_{L^2(\R_+;s_\kappa^{2 j}dr)} \\
      &= -\frac{\vol(\Sph^{2 j+2})}{\vol(\Sph^{2 j})} \la P_j f_j, (2 j+1)\cosh(\kappa r)\varphi + s_\kappa\pa_r\varphi\ra_{L^2(\HH^{2 j+1}_\kappa)} \\
      &= -\frac{(2 j+1)\vol(\Sph^{2 j+2})}{\vol(\Sph^{2 j})} \varphi(0) = -2\pi\varphi(0).
  \end{align*}
  The proof is complete.
\end{proof}

We will use a direct construction of the meromorphic continuation \eqref{EqHypResolvent} using layer potentials. This is convenient for the control of multiplicities. As preparation for this, we study the operator $P^i_\kappa$, defined by the same expression \eqref{EqHypOp}, but now in the \emph{interior} of $\cO$:  $P^i_\kappa$ is self-adjoint with domain $(H^2\cap H^1_0)(\cO)$.

\begin{lemm}
\label{LemmaHypPos}
  We have $P_\kappa\geq 0$ and $P_\kappa^i\geq 0$.
\end{lemm}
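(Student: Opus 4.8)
The plan is to show that the quadratic forms of $P_\kappa$ and $P_\kappa^i$ are non-negative on a form core, namely $\CIc(\HH^n_\kappa\setminus\cO)$ and $\CIc(\cO^\circ)$ (with $\cO^\circ$ the interior), which are dense in the form domains $H_0^1(\HH^n_\kappa\setminus\cO)$ and $H_0^1(\cO^\circ)$ of the respective Dirichlet realizations. For $u$ in such a space, integration by parts (the Dirichlet condition kills the boundary term on $\pa\cO$) gives
\[
  \langle P_\kappa u, u\rangle = \int_\Omega\Bigl(|\nabla_{g_\kappa}u|^2 - \kappa^2\bigl(\tfrac{n-1}{2}\bigr)^2 u^2\Bigr)\,d\vol_{g_\kappa},\qquad \Omega=\HH^n_\kappa\setminus\cO\ \text{or}\ \cO^\circ,
\]
so everything reduces to proving that the right-hand side is non-negative.

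For this I would substitute $w=s_\kappa^{\frac{n-1}{2}}u$ and use the $t$-independent specialization of the pointwise identity~\eqref{eq:energyk} (equivalently, the computation~\eqref{eq:k2s}), which after integrating over $\Omega$ in polar coordinates $(r,\omega)$ yields
\[
  \langle P_\kappa u, u\rangle = \int_\Omega\Bigl(w_r^2 + s_\kappa^{-2}|\nabla_h w|_h^2 + \tfrac{(n-1)(n-3)}{4}s_\kappa^{-2}w^2\Bigr)\,dr\,d\vol_h - \int_\Omega\Bigl(\tfrac{n-1}{2}\tfrac{s_\kappa'}{s_\kappa}w^2\Bigr)_{\!r}\,dr\,d\vol_h.
\]
The last integral is a total $r$-derivative and therefore contributes only boundary terms: they vanish at $r=\infty$ by compact support, on $\pa\cO$ since $w|_{\pa\cO}=0$, and at $r=0$ because $\tfrac{s_\kappa'}{s_\kappa}w^2=O(r^{n-2})\to 0$ as $r\to 0$ — here we use $n\geq 3$. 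Since moreover $(n-1)(n-3)\geq 0$ for $n\geq 3$, the remaining integral is manifestly non-negative, so $\langle P_\kappa u,u\rangle\geq 0$; by density this gives $P_\kappa\geq 0$, and the identical argument with $\Omega=\cO^\circ$ gives $P_\kappa^i\geq 0$.

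The only delicate point is the vanishing of the $r=0$ boundary contribution, which is exactly where the standing hypothesis $n\geq3$ of \S\ref{s:smalle} is used; for $n=2$ this term does not vanish, and positivity of the free operator on $\HH^2_\kappa$ is instead the assertion that the bottom of the spectrum of $-\Delta_{\HH^2_\kappa}$ equals $\tfrac14\kappa^2$, which one gets from the one-dimensional Hardy inequality together with the bound $s_\kappa(r)\geq r$. As an alternative argument valid for all $n$ at once, one may simply invoke the classical fact that $\inf\Spec(-\Delta_{g_\kappa})=\bigl(\tfrac{n-1}{2}\bigr)^2\kappa^2$ on $\HH^n_\kappa$: since extension by zero embeds $\CIc(\Omega)$ isometrically into $H^1(\HH^n_\kappa)$, the corresponding Rayleigh-quotient lower bound is inherited by the Dirichlet realization on any open $\Omega\subset\HH^n_\kappa$, which immediately yields $P_\kappa\geq 0$ and $P_\kappa^i\geq 0$.
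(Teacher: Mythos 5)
Your proof is correct, and it is a genuine variant of the paper's. The paper works in the upper half space model $(\HH^n_\kappa,g_\kappa)\cong((0,\infty)_x\times\R^{n-1}_y,\frac{dx^2+dy^2}{\kappa^2 x^2})$ and completes the square by conjugating with $x^{-\frac{n-1}{2}}$; the total derivative that remains is $(\tfrac{n-1}{2}x^{1-n}|u|^2)_x$, which integrates to zero because compactly supported test functions in $\Omega$ vanish near $x=0$ and near $\pa\cO$, so the argument works uniformly for all $n\geq 2$ with no special care about interior points. You instead work in geodesic polar coordinates and conjugate by $s_\kappa^{\frac{n-1}{2}}$ (i.e.\ the time-independent case of Lemma~\ref{l:conj}); this is the same algebraic completing-the-square, just in a coordinate chart with a coordinate singularity at $r=0$, which produces the boundary term $\tfrac{s_\kappa'}{s_\kappa}w^2=O(r^{n-2})$ that you correctly observe vanishes only for $n\geq 3$ (which is in any case the standing assumption in \S\ref{s:smalle}, so this loss is harmless here). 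Your alternative argument -- extension by zero embeds the form domain over any open $\Omega\subset\HH^n_\kappa$ into that over all of $\HH^n_\kappa$, so the Rayleigh-quotient bound $\int|\nabla u|^2\geq(\tfrac{n-1}{2})^2\kappa^2\int|u|^2$ is inherited -- is the cleanest and works for all $n\geq 2$, though it delegates the computation to the "classical fact" that $\inf\Spec(-\Delta_{g_\kappa})=(\tfrac{n-1}{2})^2\kappa^2$, which is itself established by essentially the half-space completing-the-square that the paper writes out.
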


For Neumann boundary conditions, $P_\kappa^i$ is \emph{not} non-negative for $\kappa>0$, as then $\la P_\kappa^i 1,1\ra=-(n-1)^2\kappa^2\vol_{g_\kappa}(\cO)/4<0$.

\begin{proof}[Proof of Lemma~\ref{LemmaHypPos}]
  We use the upper half space model of hyperbolic space $(\HH^n_\kappa,g_\kappa)\cong((0,\infty)_x\times\R^{n-1}_y, \frac{dx^2+dy^2}{\kappa^2 x^2})$. For $u\in\CIc(\cO)$, we then have
  \begin{align*}
    \la P_\kappa^i u,u\ra &= \int_\cO |\nabla_{g_\kappa} u|^2 - \bigl(\tfrac{n-1}{2}\bigr)^2\kappa^2 |u|^2\,dg_\kappa \\
      &= \kappa^2 \iint_\cO x^{2-n}|u_x|^2 - x^{-n}\bigl(\tfrac{n-1}{2}\bigr)^2|u|^2 + x^{2-n}|u_y|^2\,dx\,dy \\
      &= \kappa^2 \iint_\cO x\bigl|(x^{-\frac{n-1}{2}}u)_x\bigr|^2 + x^{2-n}|u_y|^2 + \bigl(\tfrac{n-1}{2}x^{1-n}|u|^2\bigr)_x\,dx\,dy \geq 0,
  \end{align*}
  where in the last step we used the vanishing of $u$ on $\pa\cO$. The argument for $P_\kappa$ is the same.
\end{proof}

By the spectral theorem, the non-negativity of $P_\kappa$ implies that $R_\kappa(\lambda)$ is holomorphic in $\Im\lambda>0$ as an operator on $L^2(\HH^n_\kappa)$.

\begin{lemm}
\label{LemmaAbsenceReal}
  The meromorphically continued resolvent $R_\kappa(\lambda)$ is regular for $\lambda\in\R$ if $\kappa=0$, and for $0\neq\lambda\in\R$ if $\kappa>0$.
\end{lemm}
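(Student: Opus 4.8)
The plan is to show that a pole $\lambda_0$ of $R_\kappa(\lambda)$ lying on the relevant part of $\R$ would produce a nontrivial resonant state which must in fact vanish. Recall from Lemma~\ref{LemmaHypPos} and the spectral theorem that $R_\kappa(\lambda)$ has no poles in $\Im\lambda>0$, so only the real axis is in question. Suppose then $\lambda_0\in\R$ (with $\lambda_0\neq 0$ if $\kappa>0$) is a pole; then there is a nonzero resonant state $\tilde v$, i.e.\ a solution of $(P_\kappa-\lambda_0^2)\tilde v=0$ on $\HH^n_\kappa\setminus\cO$ with $\tilde v|_{\pa\cO}=0$, outgoing at infinity. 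For $\kappa>0$, Theorem~\ref{t:mercont} (rescaled to $\HH^n_\kappa$) gives the precise asymptotics $\tilde v=x^{\frac{n-1}{2}-\frac{i\lambda_0}{\kappa}}v$ with $x$ a boundary defining function and $v$ smooth on the even compactification $(\overline{\HH_\kappa^n})_{\mathrm{even}}$; for $\kappa=0$ (and $n\geq 3$ odd) the classical theory gives that $\tilde v$ satisfies the outgoing Sommerfeld radiation condition when $\lambda_0\neq 0$, and $\tilde v=O(r^{2-n})$ when $\lambda_0=0$ (consistent with Lemma~\ref{LemmaHypFree}).

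In the Euclidean case the conclusion is classical. If $\lambda_0\neq 0$, Rellich's uniqueness theorem shows that an outgoing solution of $(-\Delta-\lambda_0^2)\tilde v=0$ outside a large ball vanishes there, and unique continuation for $-\Delta-\lambda_0^2$ then gives $\tilde v\equiv 0$. If $\lambda_0=0$, then $\tilde v$ is harmonic on $\R^n\setminus\cO$, vanishes on $\pa\cO$, and tends to $0$ at infinity, so $\tilde v\equiv 0$ by the maximum principle. Either way $\tilde v$ is trivial, a contradiction, so $R_0(\lambda)$ is regular on $\R$.

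For $\kappa>0$ and $\lambda_0\in\R\setminus\{0\}$ the plan is a boundary-pairing argument at the conformal boundary. Working in a collar $[0,\eps)_x\times(\pa_1\bar M)_y$ and applying Green's identity to $\tilde v$ and $\overline{\tilde v}$ on $\{x>\delta\}\setminus\cO$, the Dirichlet condition annihilates the $\pa\cO$-contribution, so only the term over $\{x=\delta\}$ remains; and since the bulk integrand $(P_\kappa-\lambda_0^2)\tilde v\cdot\overline{\tilde v}-\tilde v\cdot\overline{(P_\kappa-\lambda_0^2)\tilde v}$ vanishes identically ($\lambda_0^2\in\R$), this remaining boundary integral is $0$. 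Inserting $\tilde v=x^{\frac{n-1}{2}-\frac{i\lambda_0}{\kappa}}v$ and using that $\lambda_0$ is real — so the oscillatory factors $x^{\mp i\lambda_0/\kappa}$ cancel in $\tilde v\,\overline{\tilde v}$ and in $(\pa_x\tilde v)\overline{\tilde v}$, and the leading power genuinely survives the limit — one finds that the $\{x=\delta\}$-integral converges as $\delta\to 0$ to $c\,\tfrac{\lambda_0}{\kappa}\int_{\pa_1\bar M}|v(0,y)|^2\,dy$ for a nonzero constant $c$; here $\lambda_0\neq 0$ ensures that the two indicial roots $\tfrac{n-1}{2}\mp\tfrac{i\lambda_0}{\kappa}$ are distinct, which is exactly what makes this pairing nondegenerate. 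We conclude $v|_{x=0}\equiv 0$.

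Finally, because $v$ is smooth on the even compactification, $v|_{x=0}=0$ forces its Taylor expansion at $x=0$ to begin at order $x^2$; moreover the indicial recursion for $(P_\kappa-\lambda_0^2)(x^{\frac{n-1}{2}-i\lambda_0/\kappa}v)=0$ has, on the $x^{2k}$-coefficient, a model operator which is invertible for every $k\geq 1$ precisely because the root shift $\tfrac{n-1}{2}-\tfrac{i\lambda_0}{\kappa}+2k$ never meets $\tfrac{n-1}{2}+\tfrac{i\lambda_0}{\kappa}$ when $\lambda_0$ is real and nonzero. Hence the vanishing of the leading coefficient propagates through the whole expansion, so $\tilde v$ vanishes to infinite order at the conformal boundary, and by the unique continuation theorem at infinity for (even) asymptotically hyperbolic manifolds (Mazzeo) this gives $\tilde v\equiv 0$ — again a contradiction. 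The step I expect to be the main obstacle is exactly this last one: upgrading the vanishing of the leading asymptotic coefficient to honest vanishing of $\tilde v$, which is where the unique continuation input (equivalently, the absence of embedded eigenvalues of $P_\kappa$, using that $\tilde v$ has by then become square-integrable) is essential. Combining the three cases yields the lemma.
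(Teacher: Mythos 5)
Your proposal follows the same route as the paper's (very terse) proof: Rellich's uniqueness theorem plus unique continuation for $\kappa=0$, $\lambda\neq 0$; the maximum principle for $\kappa=0$, $\lambda=0$; and a boundary pairing argument at the conformal boundary combined with Mazzeo's unique continuation theorem for $\kappa>0$, $\lambda\neq 0$ — exactly what the paper cites via \cite[\S3.2]{HiVaForms} and \cite{Ma:uniq}. You have correctly filled in the details behind those references, including the non-coincidence of the indicial roots $\tfrac{n-1}{2}\mp\tfrac{i\lambda_0}{\kappa}$ at even integer shifts for real $\lambda_0\neq 0$, which is what makes both the boundary pairing nondegenerate and the propagation of vanishing through the asymptotic expansion work.
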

\begin{proof}
  For $\kappa=0$, this is a standard consequence of the fact that putative resonant states are outgoing, that is, they satisfy the Sommerfeld radiation condition. For $\lambda\neq 0$ then, Rellich's theorem, \cite[Theorem~3.32]{dizzy}, yields the result, while for $\lambda=0$, one applies the maximum principle, see \cite[Theorem~4.19]{dizzy}. For $\kappa>0$ and $\lambda\neq 0$, a boundary pairing argument together with unique continuation at the conformal boundary of $\HH_\kappa^n$ yields the result -- see \cite[\S3.2]{HiVaForms} and \cite{Ma:uniq}.
\end{proof}

\begin{rmk}
  For star-shaped obstacles in $\HH^n_\kappa$, $\kappa>0$, one can deal with all real $\lambda$ at once by observing that a non-trivial resonant state with real frequency would give rise to a stationary or polynomially growing solution $\tilde u$ of the Klein--Gordon equation $\bigl(\Box_{g_{\dS_\kappa^{n+1}}}+\tfrac{n^2-1}{4}\bigr)\tilde u=0$ on static de~Sitter space, with $\tilde u|_{\R\times\pa\cO}=0$, which is smooth up to (and across) the cosmological horizon of $\dS_\kappa^{n+1}$. The energy estimates proved in \S\ref{s:rela} show however that non-trivial such $\tilde u$ do not exist.
\end{rmk}

Our proof of Theorem~\ref{ThmMain} implies the absence of a resonance at $0$ for small $\kappa>0$ (depending on the obstacle). In order to analyze resonances in $\Im\lambda<0$ in an effective manner, we consider the closely related boundary value problem
\begin{equation}
\label{EqHypBVP}
  \begin{cases}
    \bigl(-\Delta_{g_\kappa}-\bigl(\tfrac{n-1}{2}\bigr)^2\kappa^2 - \lambda^2\bigr)u = 0 & \text{in}\ \HH_\kappa^n\setminus\cO, \\
    u|_{\pa\cO} = f & \text{on}\ \pa\cO,
  \end{cases}
\end{equation}
with $f\in H^{3/2}(\pa\cO)$ given, and where we seek an outgoing solution $u\in H^2_\loc(\HH_\kappa^n\setminus\cO)$. For $\Im\lambda>0$, this means finding a solution $u\in L^2(\HH_\kappa^n\setminus\cO)$, which is given by
\begin{equation}
\label{EqHypBVPbyForcing}
  u = \cB_\kappa(\lambda)f := E f - R_\kappa(\lambda)(-\Delta_{g_\kappa}-\bigl(\tfrac{n-1}{2}\bigr)^2\kappa^2-\lambda^2\bigr)E f,
\end{equation}
where $E\colon H^{3/2}(\pa\cO)\to H^2_\comp(\HH^n_\kappa\setminus\cO)$ is a continuous extension operator. Since $R_\kappa(\lambda)$ is meromorphic, equation~\eqref{EqHypBVPbyForcing} provides the meromorphic continuation of $\cB_\kappa(\lambda)$ to the complex plane in $\lambda$. On the other hand, one can reconstruct $R_\kappa(\lambda)$ from $\cB_\kappa(\lambda)$:

\begin{lemm}
\label{LemmaHypRelation}
  We have
  \begin{equation}
  \label{EqHypForcingbyBVP}
    R_\kappa(\lambda;x,y) = R_\kappa^0(\lambda;x,y) - \cB_\kappa(\lambda)\bigl(R_\kappa^0(\lambda;\cdot,y)|_{\pa\cO}\bigr).
  \end{equation}
\end{lemm}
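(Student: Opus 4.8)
The plan is to verify the identity \eqref{EqHypForcingbyBVP} by a direct computation, checking that the right-hand side solves the defining properties of $R_\kappa(\lambda)$ for $\Im\lambda\gg 0$ and then invoking uniqueness of meromorphic continuation. First I would fix $y\in\HH^n_\kappa\setminus\cO$ and $\Im\lambda$ large, and consider the candidate operator $u(x):=R_\kappa^0(\lambda;x,y)-\cB_\kappa(\lambda)\bigl(R_\kappa^0(\lambda;\cdot,y)|_{\pa\cO}\bigr)(x)$ acting on a test distribution supported at $y$ (equivalently, I would think of this as an identity of distributional kernels). The function $x\mapsto R_\kappa^0(\lambda;x,y)$ solves $(-\Delta_{g_\kappa}-(\tfrac{n-1}{2})^2\kappa^2-\lambda^2)R_\kappa^0(\lambda;\cdot,y)=\delta_y$ on all of $\HH^n_\kappa$ by Lemma~\ref{LemmaHypFree}, in particular on $\HH^n_\kappa\setminus\cO$, where it is smooth away from $y$; meanwhile $x\mapsto\cB_\kappa(\lambda)\bigl(R_\kappa^0(\lambda;\cdot,y)|_{\pa\cO}\bigr)(x)$ solves the homogeneous equation $(-\Delta_{g_\kappa}-(\tfrac{n-1}{2})^2\kappa^2-\lambda^2)u=0$ in $\HH^n_\kappa\setminus\cO$ by the very definition \eqref{EqHypBVP}–\eqref{EqHypBVPbyForcing} of $\cB_\kappa(\lambda)$. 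Hence the difference $u$ satisfies $(-\Delta_{g_\kappa}-(\tfrac{n-1}{2})^2\kappa^2-\lambda^2)u=\delta_y$ in $\HH^n_\kappa\setminus\cO$.

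Next I would check the boundary condition: on $\pa\cO$ we have $\cB_\kappa(\lambda)\bigl(R_\kappa^0(\lambda;\cdot,y)|_{\pa\cO}\bigr)\big|_{\pa\cO}=R_\kappa^0(\lambda;\cdot,y)|_{\pa\cO}$, since $\cB_\kappa(\lambda)f$ by construction restricts to $f$ on $\pa\cO$ (the extension operator $E$ satisfies $Ef|_{\pa\cO}=f$, and $R_\kappa(\lambda)(\cdots)$ lies in $H^2_0$, hence vanishes on $\pa\cO$, for $\Im\lambda>0$). Therefore $u|_{\pa\cO}=R_\kappa^0(\lambda;\cdot,y)|_{\pa\cO}-R_\kappa^0(\lambda;\cdot,y)|_{\pa\cO}=0$, so $u$ meets the Dirichlet condition. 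Finally, for $\Im\lambda>0$ one must check that $u\in L^2$ (so that it is genuinely $R_\kappa(\lambda)\delta_y$, interpreted appropriately, or more carefully, that pairing against compactly supported data lands in $L^2$): $R_\kappa^0(\lambda;\cdot,y)$ is in $L^2$ away from a neighborhood of infinity — actually it decays since $\Im\lambda>0$ — and $\cB_\kappa(\lambda)$ applied to an $H^{3/2}(\pa\cO)$ function yields, via \eqref{EqHypBVPbyForcing}, an $L^2$ function because $R_\kappa(\lambda)\colon L^2\to L^2$ there and $Ef$ is compactly supported. Thus $u=R_\kappa(\lambda;\cdot,y)$ by uniqueness of the self-adjoint resolvent (using $P_\kappa\geq 0$ from Lemma~\ref{LemmaHypPos}, so $\lambda^2\notin\Spec(P_\kappa)$ for $\Im\lambda>0$), which proves \eqref{EqHypForcingbyBVP} for $\Im\lambda\gg 0$.

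To conclude for all $\lambda\in\CC$, I would note that both sides of \eqref{EqHypForcingbyBVP} are meromorphic families of operators on $\CC_\lambda$ (with values in operators $L^2_\comp\to L^2_\loc$, or as distributional kernels on $(\HH^n_\kappa\setminus\cO)\times(\HH^n_\kappa\setminus\cO)$ minus the diagonal): the left-hand side by Theorem~\ref{t:mercont}, and the right-hand side because $R_\kappa^0(\lambda)$ is entire (Lemma~\ref{LemmaHypFree}) and $\cB_\kappa(\lambda)$ is meromorphic by \eqref{EqHypBVPbyForcing}, with $\lambda\mapsto R_\kappa^0(\lambda;\cdot,y)|_{\pa\cO}$ holomorphic in $\lambda$. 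Since they agree on the open set $\Im\lambda\gg 0$, they agree everywhere by analytic continuation. I expect the only mildly delicate point to be bookkeeping of function spaces and the precise sense in which these kernel identities hold near the diagonal and near infinity — i.e.\ justifying that $\cB_\kappa(\lambda)$ may legitimately be applied to $R_\kappa^0(\lambda;\cdot,y)|_{\pa\cO}\in H^{3/2}(\pa\cO)$ (which holds since $R_\kappa^0(\lambda;\cdot,y)$ is smooth near $\pa\cO$ as $y\notin\cO$) and that restriction to $\pa\cO$ and the meromorphic continuation commute. None of this is conceptually hard, and the core of the argument is the short verification in the first two paragraphs that the right-hand side of \eqref{EqHypForcingbyBVP} solves the correct boundary value problem with the correct $\delta$-singularity.
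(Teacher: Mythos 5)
Your proposal is correct and follows essentially the same route as the paper: apply the operator to both sides of \eqref{EqHypForcingbyBVP} to see they produce $\delta_y$, observe that after pairing with $f\in\CIc$ both sides yield $L^2$ solutions of the Dirichlet problem $P_\kappa u=f$ when $\Im\lambda>0$, invoke uniqueness via the spectral theorem (equivalently, $P_\kappa\geq 0$ from Lemma~\ref{LemmaHypPos}), and then extend to all $\lambda$ by meromorphic continuation. The only small slip is writing $H^2_0$ for the range of $R_\kappa(\lambda)$: the domain of $P_\kappa$ is $H^2\cap H^1_0$, not $H^2_0$, but that does not affect the argument since $H^1_0$ already gives the vanishing on $\pa\cO$.
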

\begin{proof}
  Applying the operator $-\Delta_{g_\kappa}-\bigl(\tfrac{n-1}{2}\bigr)^2\kappa^2-\lambda^2$ to either side yields $\delta_y(x)$. Moreover, for $\Im\lambda>0$, multiplying either side with $f(y)$, $f\in\CIc(\HH^n_\kappa\setminus\cO)$, and integrating over $y$ gives two $L^2$ solutions $u_L$ and $u_R$ of $P_\kappa u=f$, $u|_{\pa\cO}=0$; but by the spectral theorem, we must have $u_L=u_R$. This establishes \eqref{EqHypForcingbyBVP} for $\Im\lambda>0$; for general $\lambda\in\C$ it then follows by meromorphic continuation.
\end{proof}

Defining the multiplicity of a resonance $\lambda$ of $\cB_\kappa$ as
\[
  m^\cB_\kappa(\lambda) := \dim\Bigl[\Bigl(\oint_\lambda \cB_\kappa(\zeta)\,d\zeta\Bigr)(H^{3/2}(\pa\cO))\Bigr],
\]
we conclude that
\begin{equation}
\label{eq:samemult}
  m_\kappa(\lambda)=m^\cB_\kappa(\lambda),\ \ \ \lambda\in\C.
\end{equation}
In fact, equation~\eqref{EqHypBVPbyForcing} implies $m^\cB_\kappa(\lambda)\leq m_\kappa(\lambda)$, while equation~\eqref{EqHypForcingbyBVP} implies the reverse inequality. In order to study $\cB_\kappa(\lambda)$, we introduce the single layer potential
\[
  \Sl_\kappa(\lambda)f(x) := \int_{\pa\cO} R_\kappa^0(\lambda;x,y)f(y)\,d\sigma_\kappa(y),\ x\in\HH_\kappa^n\setminus\pa\cO,
\]
where $d\sigma_\kappa$ is the surface measure on $\pa\cO$ induced by the volume form $d\mathrm{vol}_{g_\kappa}$. Denote by $\pa_\nu$ the normal vector field of $\pa\cO$ pointing into $\cO$, and for a function $u$ on $\HH^n_\kappa$ for which $u|_{\cO}$ and $u|_{\HH^n_\kappa\setminus\cO}$ are smooth up to $\pa\cO$, denote by $u_+$, resp.\ $u_-$, the limits of $u$ to $\pa\cO$ from $\HH^n_\kappa\setminus\cO$, resp.\ $\cO$. We then recall the formul\ae{}
\[
  (\Sl_\kappa(\lambda)f)_\pm = G_\kappa(\lambda)f,\quad G_\kappa(\lambda)f(x) := \int_{\pa\cO} R_\kappa^0(\lambda;x,y)f(y)\,d\sigma_\kappa(y),\ x\in\pa\cO,
\]
and
\begin{gather*}
  (\pa_\nu\Sl_\kappa(\lambda)f)_\pm = \frac{1}{2}(\mp f+N^\sharp_\kappa(\lambda)f), \\
  N^\sharp_\kappa(\lambda)f(x) := 2\int_{\pa\cO} \pa_{\nu_x}R_\kappa^0(\lambda;x,y)f(y)\,d\sigma_\kappa(y),\ x\in\pa\cO;
\end{gather*}
moreover, $G_\kappa(\lambda),N^\sharp_\kappa(\lambda)\in\Psi^{-1}(\pa\cO)$ are entire in $\lambda$, where $\Psi^s(\pa\cO)$ denotes the space of pseudodifferential operators of order $s\in\R$ on the closed manifold $\pa\cO$ \cite[\S18.1]{H3}. The principal symbol of $G_\kappa(\lambda)$ is given by $\bigl|g_\kappa|_x(\xi,\xi)\bigr|^{-1/2}$, $\xi\in T^*_x\pa\cO$, in particular it is independent of $\lambda$. We note some basic properties:

\begin{lemm}
  $G_\kappa(\lambda)$ is injective for $\Im\lambda>0$, and for $\lambda\in\R\setminus\{0\}$ for which $\lambda^2$ is not an eigenvalue of the interior Dirichlet problem $(P_\kappa^i-\lambda^2)u=0$. Furthermore, 
  $$\Sl_\kappa(\lambda)\colon H^{3/2}(\pa\cO)\to L^2_\loc(\HH^n_\kappa\setminus\cO)$$
   is injective for $\lambda\not\in\R$.
\end{lemm}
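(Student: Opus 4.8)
The plan is to run the classical single layer potential argument, feeding in the uniqueness facts already available: non-negativity of the interior and exterior Dirichlet operators (Lemma~\ref{LemmaHypPos}) and regularity of the continued resolvent at real frequencies (Lemma~\ref{LemmaAbsenceReal}). In every case I set $u:=\Sl_\kappa(\lambda)f$ and use the two jump relations recorded above: $u$ is continuous across $\pa\cO$ with $u|_{\pa\cO}=G_\kappa(\lambda)f$, and $(\pa_\nu\Sl_\kappa(\lambda)f)_--(\pa_\nu\Sl_\kappa(\lambda)f)_+=f$. Thus it suffices to show that $u$ vanishes identically on both the exterior $\HH^n_\kappa\setminus\cO$ and the interior $\cO$; then both one-sided normal derivatives vanish and the jump relation forces $f=0$. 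Away from $\pa\cO$ the function $u$ is a classical solution of $\bigl(-\Delta_{g_\kappa}-\bigl(\tfrac{n-1}{2}\bigr)^2\kappa^2-\lambda^2\bigr)u=0$, smooth up to $\pa\cO$ from either side by the mapping properties of layer potentials, and on the exterior $u$ is outgoing, being $R_\kappa^0(\lambda)$ applied to the compactly supported distribution $f\,d\sigma_\kappa$.

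For the exterior vanishing I distinguish cases. If $\Im\lambda>0$, the explicit kernel of Lemma~\ref{LemmaHypFree} shows $u$ decays exponentially at infinity, so $u|_{\HH^n_\kappa\setminus\cO}$ lies in $H^2\cap H^1_0$ and is an $L^2$ eigenfunction of the self-adjoint operator $P_\kappa$ with eigenvalue $\lambda^2$; since $P_\kappa\geq 0$ by Lemma~\ref{LemmaHypPos} and $\lambda^2\notin[0,\infty)$ whenever $\Im\lambda\neq 0$, we get $u\equiv 0$ on the exterior (this covers the half-plane $\Im\lambda>0$ for both assertions). If $\lambda\in\R\setminus\{0\}$ — the remaining case of the first assertion — then $u|_{\HH^n_\kappa\setminus\cO}$ is an outgoing solution of the free equation vanishing on $\pa\cO$, which by Lemma~\ref{LemmaAbsenceReal} (Rellich uniqueness and unique continuation for $\kappa=0$, the boundary pairing argument for $\kappa>0$) must vanish identically. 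Finally, for the "furthermore" statement with $\lambda\notin\R$, the exterior vanishing of $u$ is precisely the hypothesis $\Sl_\kappa(\lambda)f\equiv 0$ on $\HH^n_\kappa\setminus\cO$, which in particular also handles the case $\Im\lambda<0$.

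For the interior vanishing, observe that in each case we now know $u|_{\pa\cO}=0$: it is the hypothesis $G_\kappa(\lambda)f=0$ in the first assertion, and follows from $u\equiv 0$ on the exterior together with continuity of $u$ across $\pa\cO$ in the "furthermore" statement. Hence $u|_\cO$ solves the interior Dirichlet problem $(P^i_\kappa-\lambda^2)u=0$ with zero boundary data; if it were non-trivial it would be a Dirichlet eigenfunction of $P^i_\kappa$ with eigenvalue $\lambda^2$. Since $P^i_\kappa\geq 0$ by Lemma~\ref{LemmaHypPos} and hence has discrete spectrum in $[0,\infty)$, for $\Im\lambda\neq 0$ we again have $\lambda^2\notin[0,\infty)$ and $u|_\cO\equiv 0$, while for $\lambda\in\R\setminus\{0\}$ this is excluded by the standing assumption that $\lambda^2$ is not an interior Dirichlet eigenvalue. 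So $u$ vanishes on both sides of $\pa\cO$, both one-sided normal derivatives vanish, and the jump relation gives $f=0$, which is the asserted injectivity of $G_\kappa(\lambda)$ and of $\Sl_\kappa(\lambda)$ in the respective ranges of $\lambda$.

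The only point requiring a little care is the meaning of "outgoing" for $u|_{\HH^n_\kappa\setminus\cO}$ when $\lambda$ is real and $\kappa>0$, which is what lets Lemma~\ref{LemmaAbsenceReal} apply: one needs that $R_\kappa^0(\lambda)$ sends the compactly supported distribution $f\,d\sigma_\kappa$ to a function of the form $x^{\frac{n-1}{2}-i\lambda}v$ with $v$ smooth on the even conformal compactification near infinity. This is immediate from the explicit formula of Lemma~\ref{LemmaHypFree} (equivalently, from the structure underlying Theorem~\ref{t:mercont}); for $\kappa=0$ the analogous statement is just that $\Sl_0(\lambda)f$ satisfies the Sommerfeld radiation condition, which is visible from the free kernel. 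With this in hand the remainder of the argument is the routine jump-relation computation for single layer potentials.
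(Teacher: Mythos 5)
Your proof is correct and runs along essentially the same lines as the paper's: set $u=\Sl_\kappa(\lambda)f$, use exterior uniqueness (via non-negativity of $P_\kappa$ for $\Im\lambda>0$, via Lemma~\ref{LemmaAbsenceReal} for real $\lambda\neq 0$) and interior uniqueness (via non-negativity of $P_\kappa^i$, or the non-eigenvalue hypothesis) to conclude $u\equiv 0$ on both sides of $\partial\cO$, and then recover $f=0$ from the normal-derivative jump relation. You merely make explicit a couple of steps the paper leaves implicit, such as the $L^2$ membership of $u$ in the exterior for $\Im\lambda>0$ and the outgoing nature of $u$ for real $\lambda$, but the underlying argument is the same.
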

\begin{proof}
  This is proved for $\R^3$ in \cite[\S9.7]{Tay}; we give the proof in general for completeness, in particular highlighting the use of the Dirichlet (rather than Neumann) boundary condition. Suppose $G_\kappa(\lambda)g=0$, $\Im\lambda\geq 0$, $\lambda\neq 0$, then $u:=\Sl_\kappa(\lambda)g$, defined on $\HH^n_\kappa\setminus\pa\cO$, solves the exterior problem \eqref{EqHypBVP} with $f=0$, hence $u\equiv 0$ outside $\cO$. Therefore, the restriction $u^i:=u|_\cO$ to the interior of the obstacle solves the Dirichlet problem $(P_\kappa^i-\lambda^2)u^i=0$, with Neumann data
  \[
    \pa_\nu u^i = (\pa_\nu u)_- - (\pa_\nu u)_+ = g.
  \]
  For $\Im\lambda>0$, Lemma~\ref{LemmaHypPos} implies $u^i\equiv 0$, hence $g=0$; for real $\lambda$ on the other hand, if $\lambda^2$ is not an eigenvalue of the interior Dirichlet problem, then $u^i\equiv 0$ as well.

  To prove the final claim, suppose $\Sl_\kappa(\lambda)f=0$ outside $\cO$, then $u^i:=\Sl_\kappa(\lambda)f\in H^2(\cO)$ solves $(P_\kappa^i-\lambda^2)u^i=0$ with vanishing Dirichlet data. Since $\lambda\not\in\R$, Lemma~\ref{LemmaHypPos} implies $u^i\equiv 0$, therefore $f = \pa_\nu u^i=0$, as desired.
\end{proof}

Moreover, $G_\kappa(\lambda)$ is self-adjoint for real $\lambda$, hence by ellipticity it is Fredholm with index $0$ as a map $H^s(\pa\cO)\to H^{s+1}(\pa\cO)$ for all $s\in\R$. Fix $\lambda_0\in\R$ such that $G_\kappa(\lambda_0)$ is injective, hence invertible, then formula
\[
  G_\kappa(\lambda)^{-1} = G_\kappa(\lambda_0)^{-1}(I + \Gamma_\kappa(\lambda)G_\kappa(\lambda_0)^{-1})^{-1},\quad \Gamma_\kappa(\lambda):=G_\kappa(\lambda)-G_\kappa(\lambda_0)\in\Psi^{-2}(\pa\cO),
\]
with $\Gamma_\kappa(\lambda)G_\kappa(\lambda_0)^{-1}\in\Psi^{-1}(\pa\cO)$ entire, gives the meromorphic continuation of
\[
  G_\kappa(\lambda)^{-1}\colon H^s(\pa\cO)\to H^{s+1}(\pa\cO)
\]
from $\Im\lambda>0$ to the complex plane; $G_\kappa(\lambda)^{-1}$ has poles of finite order, and the operators in the Laurent series at a pole have finite rank. Then
\begin{equation}
\label{EqHypBVPCont}
  \cB_\kappa(\lambda) = \Sl_\kappa(\lambda)G_\kappa(\lambda)^{-1} \colon H^{1/2}(\pa\cO)\to H^2_\loc(\HH^n_\kappa\setminus\cO)
\end{equation}
furnishes a direct way of meromorphically continuing $\cB_\kappa(\lambda)$. (By Lemma~\ref{LemmaAbsenceReal}, the poles of $G_\kappa(\lambda)$ in the case that $\lambda^2$ is an interior Dirichlet eigenvalue do not give rise to poles of $\cB_\kappa(\lambda)$.) Moreover, the set of poles of $\cB_\kappa(\lambda)$ agrees in $\Im\lambda<0$ with the set of poles of $G_\kappa(\lambda)^{-1}$. The crucial fact is then:

\begin{prop}
\label{PropHypBVPPoles}
  For a resonance $\lambda\in\C\setminus\R$, we have
  \[
    m^\cB_\kappa(\lambda) = m^G_\kappa(\lambda) := \tr\frac{1}{2\pi i}\oint_\lambda \pa_\lambda G_\kappa(\zeta)\,G_\kappa(\zeta)^{-1}\,d\zeta,
  \]
  where we integrate along a small circle around $\lambda$, oriented counter-clockwise, which does not intersect the real line and does not contain any other resonances.
\end{prop}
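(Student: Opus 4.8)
\emph{Approach.} I would deduce the identity from the Gohberg--Sigal calculus of multiplicities of characteristic values of finitely meromorphic Fredholm families (see \cite{dizzy}), combined with two facts already in hand: the factorization $\cB_\kappa(\lambda)=\Sl_\kappa(\lambda)G_\kappa(\lambda)^{-1}$ of \eqref{EqHypBVPCont}, and the identity $m^\cB_\kappa(\lambda)=m_\kappa(\lambda)$ of \eqref{eq:samemult}, where $m_\kappa(\lambda)$ is the rank of the Riesz projection of $P_\kappa$ at the resonance $\lambda$. Thus it suffices to prove $m_\kappa(\lambda)=m^G_\kappa(\lambda)$ for $\lambda\in\C\setminus\R$. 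I would also record the structural ingredients to be used: $G_\kappa(\zeta)=\gamma\Sl_\kappa(\zeta)$ is entire and elliptic in $\Psi^{-1}(\partial\cO)$, Fredholm of index $0$, and invertible for $\Im\zeta>0$, so $G_\kappa(\zeta)^{-1}$ is finitely meromorphic with Laurent coefficients of finite rank; $\Sl_\kappa(\zeta)$ is entire and, by the injectivity lemma above, injective for $\zeta\notin\R$.

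\emph{Main step.} The bridge is Lemma~\ref{LemmaHypRelation}, which expresses the resolvent as $R_\kappa(\zeta)=R^0_\kappa(\zeta)-\Sl_\kappa(\zeta)\,G_\kappa(\zeta)^{-1}\,\gamma R^0_\kappa(\zeta)$, with $\gamma$ the ($\zeta$-independent) restriction to $\partial\cO$. Here $R^0_\kappa$, $\Sl_\kappa$, and $\gamma R^0_\kappa$ are entire; moreover $\Sl_\kappa(\zeta)$ is injective with closed range for $\zeta\notin\R$ (since $\gamma\Sl_\kappa(\zeta)=G_\kappa(\zeta)\in\Psi^{-1}(\partial\cO)$ is elliptic), hence has a holomorphic left inverse near $\lambda$, and $\gamma R^0_\kappa(\zeta)$ is surjective with (closed, hence complemented) kernel, hence has a holomorphic right inverse near $\lambda$. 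The Gohberg--Sigal principle then says that poles and their multiplicities (in the logarithmic-residue sense) on the left and on the right of this relation coincide, so the pole multiplicity of $R_\kappa$ at $\lambda$ equals that of $G_\kappa(\zeta)^{-1}$, which is $-\,\tfrac1{2\pi i}\tr\oint_\lambda\partial_\zeta(G_\kappa^{-1})\,G_\kappa=m^G_\kappa(\lambda)$. Finally, for the resolvent the logarithmic residue equals the rank of the Riesz projection (the $\zeta\mapsto\zeta^2$ reparametrization and the nilpotent part of the projection being harmless, as one checks by a short computation near $\lambda\neq 0$), so the left-hand multiplicity is exactly $m_\kappa(\lambda)$; with \eqref{eq:samemult} this gives $m^\cB_\kappa(\lambda)=m_\kappa(\lambda)=m^G_\kappa(\lambda)$. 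The restriction $\lambda\notin\R$ enters precisely where injectivity of $\Sl_\kappa$ is used, and is also what excludes the spurious real characteristic values of $G_\kappa$ coming from interior Dirichlet eigenvalues.

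\emph{Main obstacle.} The technical heart is not the formula manipulation but making the Gohberg--Sigal comparison rigorous between operators acting on \emph{different} spaces (functions on $\HH^n_\kappa\setminus\cO$, functions on $\partial\cO$, and elements of $L^2_\comp$), and reconciling the two formally different notions of multiplicity in play: $m^\cB_\kappa(\lambda)$ and $m_\kappa(\lambda)$ are defined as ranks of single residues, while $m^G_\kappa(\lambda)$ is a logarithmic residue that is sensitive to higher-order poles and to nontrivial Jordan structure; the factorization alone yields only inequalities. The algebraic input that makes them agree is exactly the compatibility $\gamma\Sl_\kappa=G_\kappa$ together with the injectivity of $\Sl_\kappa$ off the real axis, which supply the holomorphic one-sided inverses needed for the Gohberg--Sigal principle; ellipticity and the index-$0$ Fredholm property of $G_\kappa$ guarantee that all Laurent coefficients involved have finite rank, so that every trace and rank written above is legitimate.
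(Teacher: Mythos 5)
Your proposal takes a genuinely different route from the paper, and it contains a gap at its crucial step, which I want to isolate precisely.

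You correctly reduce, via \eqref{eq:samemult}, to showing $m_\kappa(\lambda)=m^G_\kappa(\lambda)$, and you correctly record that $\Sl_\kappa(\zeta)$ is holomorphic and injective with closed range (hence admits a holomorphic left inverse near $\lambda$) while the map $f\mapsto R^0_\kappa(\zeta)f|_{\pa\cO}$ is holomorphic and surjective. Using Lemma~\ref{LemmaHypRelation} this does yield a correspondence between the principal Laurent parts of $R_\kappa(\zeta)$ and of $G_\kappa(\zeta)^{-1}$ at $\lambda$, and in particular that the rank of the residue of $R_\kappa$ there (which is $m_\kappa(\lambda)$ by definition) equals the rank of the residue of $G_\kappa^{-1}$. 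But the quantity you need is the \emph{logarithmic} residue $m^G_\kappa(\lambda)=\tr\frac{1}{2\pi i}\oint_\lambda G_\kappa'G_\kappa^{-1}$, which in a Gohberg--Sigal normal form $G_\kappa\sim E\bigl(\Id+\sum_j((\zeta-\lambda)^{k_j}-1)P_j\bigr)F$ equals $\sum_j k_j$, whereas the rank of the residue of $G_\kappa^{-1}$ is only the number of $j$ with $k_j\geq 1$. These coincide exactly when every $k_j=1$. The Gohberg--Sigal Rouch\'e theorem (\cite[Theorem~C.9]{dizzy}) transports logarithmic residues under holomorphically \emph{invertible} conjugations; with merely one-sided holomorphic inverses one controls ranks of individual Laurent coefficients (and even this requires care, since the residue of $\Sl_\kappa G_\kappa^{-1}(R^0_\kappa|_{\pa\cO})$ has cross terms from the higher-order poles), but not the pole algebra that the log residue measures. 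You flag this tension in your final paragraph as a ``reconciliation'' issue, but the remedy you point to — injectivity of $\Sl_\kappa$ — is not an automatic patch: it is precisely what allows one to \emph{read off} the partial multiplicities $k_j$, and that extraction has to be carried out.

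This is what the paper's proof actually does and what is missing from yours: after putting $G_\kappa$ in normal form (\cite[Theorem~C.7]{dizzy}), it invokes the generalized-mode characterization of $m^\cB_\kappa(\lambda)$ from Lemma~\ref{LemmaHypFormula} (which you did not use), in which the Jordan structure appears as powers of $t$ in $\Res_{\zeta=\lambda}e^{-i\zeta t}\cB_\kappa(\zeta)f(\zeta)$; the injectivity of $\Sl_\kappa(\lambda)$ then lets one count $\dim W_k/W_{k-1}=\sum_{j\geq k}\rank\Pi_j$ layer by layer, and summing gives the log residue $\sum_j j\rank\Pi_j$. To make your route rigorous you would essentially have to redo that computation; quoting the Gohberg--Sigal principle is not a substitute for it.
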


In order to prove this, we first give more general formul\ae{} for $m_\kappa(\lambda)$ and $m_\kappa^\cB(\lambda)$ -- see also \cite[\S5.1.1]{HiVaStab}.

\begin{lemm}
\label{LemmaHypFormula}
  For $\lambda\neq 0$, we have
  \begin{equation}
  \label{EqHypFormulaForc}
  \begin{split}
    m_\kappa(\lambda) = \dim \bigl\{&\Res_{\zeta=\lambda} e^{-i\zeta t}R_\kappa(\zeta)f(\zeta) \colon \\
    &\qquad f(\zeta)\ \text{holomorphic with values in}\ L^2_\comp(\HH^n_\kappa\setminus\cO)\bigr\},
  \end{split}
  \end{equation}
  where the space on the right hand side is a subspace of $L^2_\loc(\R_t\times(\HH^n_\kappa\setminus\cO))$. Similarly,
  \begin{equation}
  \label{EqHypFormulaBVP}
  \begin{split}
    m_\kappa^\cB(\lambda) = \dim \bigl\{&\Res_{\zeta=\lambda} e^{-i\zeta t}\cB_\kappa(\zeta)f(\zeta) \colon \\
    &\qquad f(\zeta)\ \text{holomorphic with values in}\ H^{3/2}(\pa\cO)\bigr\}.
  \end{split}
  \end{equation}
\end{lemm}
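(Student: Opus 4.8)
The plan is to read off $\Res_{\zeta=\lambda}e^{-i\zeta t}R_\kappa(\zeta)f(\zeta)$ from the Laurent expansion of $R_\kappa$ at the resonance $\lambda$, to recognize it as a solution of the wave equation, and to identify the dimension of the space of such residues with the rank of the residue operator $\frac{1}{2\pi i}\oint_\lambda R_\kappa(\zeta)\,d\zeta$, which is $m_\kappa(\lambda)$ by \eqref{eq:multk}; the formula \eqref{EqHypFormulaBVP} will then follow by matching the residue spaces for $\cB_\kappa$ and $R_\kappa$ via \eqref{EqHypBVPbyForcing}, \eqref{EqHypForcingbyBVP} and \eqref{eq:samemult}. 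In detail: let $\lambda\neq 0$ be a pole of $R_\kappa(\zeta)$ of order $p$, with (finite rank) Laurent coefficients $A_1,\dots,A_p$, so $\frac{1}{2\pi i}\oint_\lambda R_\kappa(\zeta)\,d\zeta=A_1$ and $m_\kappa(\lambda)=\rank A_1$. For a holomorphic family $f(\zeta)=\sum_{m\geq 0}f_m(\zeta-\lambda)^m$ with values in $L^2_\comp(\HH^n_\kappa\setminus\cO)$, substituting this expansion together with $e^{-i\zeta t}=e^{-i\lambda t}\sum_{k\geq 0}\tfrac{(-it)^k}{k!}(\zeta-\lambda)^k$ and collecting the coefficient of $(\zeta-\lambda)^{-1}$ gives
\[
  \Res_{\zeta=\lambda}e^{-i\zeta t}R_\kappa(\zeta)f(\zeta)=e^{-i\lambda t}\sum_{k=0}^{p-1}\frac{(-it)^k}{k!}\sum_{j=k+1}^{p}A_j f_{j-1-k}.
\]
Since $(P_\kappa-\zeta^2)R_\kappa(\zeta)=I$ and $R_\kappa(\zeta)$ maps into functions vanishing on $\pa\cO$, this solves $(D_t^2-P_\kappa)u=0$ with $u|_{\R_t\times\pa\cO}=0$; in particular it is a well-defined element of $L^2_\loc(\R_t\times(\HH^n_\kappa\setminus\cO))$ of the form $\sum_{k=0}^{p-1}t^k e^{-i\lambda t}v_k(x)$, $v_k=\tfrac{(-i)^k}{k!}\sum_{j=k+1}^p A_j f_{j-1-k}$. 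As $t^ke^{-i\lambda t}$ are linearly independent, the dimension of the right-hand side of \eqref{EqHypFormulaForc} equals the dimension of the span of the tuples $(v_0,\dots,v_{p-1})$ obtained as $f_0,\dots,f_{p-1}$ range over $L^2_\comp$.

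The heart of the matter is that this dimension is exactly $\rank A_1$. For a simple pole ($p=1$) it is immediate: the residue is $e^{-i\lambda t}A_1 f_0$, and $A_1(L^2_\comp)$, being dense in the finite dimensional space $\ran A_1$, fills it out. For $p>1$ one invokes the algebraic relations among the Laurent coefficients forced by the two-sided identity $(P_\kappa-\zeta^2)R_\kappa(\zeta)=R_\kappa(\zeta)(P_\kappa-\zeta^2)=I$, namely $(P_\kappa-\lambda^2)A_j=2\lambda A_{j+1}+A_{j+2}=A_j(P_\kappa-\lambda^2)$ with $A_{p+1}=A_{p+2}=0$, which tie the $A_j$ together so that the span of the tuples reduces to $\ran A_1$; this is the standard identification of the space of root functions of $R_\kappa$ at $\lambda$ with the algebraic multiplicity in the calculus of meromorphic operator families, cf.\ \cite[\S5.1.1]{HiVaStab}. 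It is here that the hypothesis $\lambda\neq 0$ is essential: it makes $\zeta\mapsto\zeta^2$ a local biholomorphism near $\lambda$ and keeps the coupling term $2\lambda A_{j+1}$ alive, whereas at $\lambda=0$ the coefficients decouple and the assertion fails. I expect this bookkeeping to be the main obstacle; the rest is routine.

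For \eqref{EqHypFormulaBVP} I would not repeat the argument but instead show that the residue spaces for $\cB_\kappa$ and for $R_\kappa$ coincide. By \eqref{EqHypBVPbyForcing}, for a holomorphic $H^{3/2}(\pa\cO)$-valued family $f$ we have $\Res_{\zeta=\lambda}e^{-i\zeta t}\cB_\kappa(\zeta)f(\zeta)=-\Res_{\zeta=\lambda}e^{-i\zeta t}R_\kappa(\zeta)g(\zeta)$, where $g(\zeta):=(P_\kappa-\zeta^2)Ef(\zeta)$ is holomorphic with values in $L^2_\comp(\HH^n_\kappa\setminus\cO)$ and the term $e^{-i\zeta t}Ef(\zeta)$ is holomorphic, hence contributes no residue; so the right-hand side of \eqref{EqHypFormulaBVP} is contained in that of \eqref{EqHypFormulaForc}. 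Conversely, by \eqref{EqHypForcingbyBVP}, for a holomorphic $L^2_\comp$-valued family $g$ we have $\Res_{\zeta=\lambda}e^{-i\zeta t}R_\kappa(\zeta)g(\zeta)=-\Res_{\zeta=\lambda}e^{-i\zeta t}\cB_\kappa(\zeta)h(\zeta)$ with $h(\zeta):=(R_\kappa^0(\zeta)g(\zeta))|_{\pa\cO}$ holomorphic with values in $H^{3/2}(\pa\cO)$, since $R_\kappa^0$ is entire; this yields the reverse containment. Hence the two spaces agree, and by \eqref{eq:samemult} and the case \eqref{EqHypFormulaForc} already proved, their common dimension is $m_\kappa(\lambda)=m^\cB_\kappa(\lambda)$.
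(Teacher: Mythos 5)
Your proposal follows the same outline as the paper: derive \eqref{EqHypFormulaBVP} from \eqref{EqHypFormulaForc} by showing the two residue spaces coincide via \eqref{EqHypBVPbyForcing}, \eqref{EqHypForcingbyBVP} (and this part you carry out cleanly, and it matches the paper's argument); for \eqref{EqHypFormulaForc}, expand the residue explicitly in terms of the Laurent coefficients of $R_\kappa$ and reduce to a finite-dimensional dimension count. Your residue formula and the two-sided relations $(P_\kappa-\lambda^2)A_j = 2\lambda A_{j+1}+A_{j+2} = A_j(P_\kappa-\lambda^2)$ are both correct, and your remark on the role of $\lambda\neq 0$ is exactly the right intuition. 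The $p=1$ case and the trivial inequality $m_\kappa(\lambda)\leq\tilde m_\kappa(\lambda)$ are handled (the density remark is superfluous: $\ran A_1$ is by definition $A_1(L^2_\comp)$, no closure needed since it is already finite-dimensional).

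However, there is a genuine gap at the step you yourself flag as ``the main obstacle'': you assert but do not prove that for $p>1$ the span of the tuples $(v_0,\ldots,v_{p-1})$ has dimension $\rank A_1$, deferring to ``the standard identification\ldots cf.\ \cite[\S5.1.1]{HiVaStab}.'' This is not a routine step and it is precisely the content of the paper's proof, which goes through several nontrivial moves: (i) it invokes the special structure of $R_\kappa$ as the meromorphic continuation of the spectral family of a fixed operator (\cite[Theorem~4.7]{dizzy}), writing $R_\kappa(\zeta)=\sum_{j=1}^{M_\lambda}(P_\kappa-\lambda^2)^{j-1}\Pi\,(\zeta^2-\lambda^2)^{-j}+A(\zeta)$ with $\Pi$ finite rank and $(P_\kappa-\lambda^2)|_{\ran\Pi}$ nilpotent — the paper explicitly warns that the conclusion is \emph{false} for general finitely-meromorphic families, so this structure must be used; (ii) it shows $m_\kappa(\lambda)=\rank\Pi$ by expanding $\frac{1}{2\pi i}\oint_\lambda R_\kappa(\zeta)\,d\zeta$ and observing it differs from $(2\lambda)^{-1}\Id$ by a nilpotent; (iii) it reduces the resulting linear-algebra problem to the case of a single Jordan block; and (iv) it identifies the relevant span with the space of $M\times M$ Hankel matrices, which has dimension $M$. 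None of this ``bookkeeping'' is in your proposal, so while your high-level plan is the right one, the proof is incomplete where the real work is.
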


\begin{rmk}
  These two formulas describe the multiplicity of a resonance $\lambda$ as the dimension of the space of generalized mode solutions, with frequency $\lambda$, of the forward problem for
  \[
    \begin{cases}
      (D_t^2 - P_\kappa)\tilde u = f \in \CIc(\R_t;L^2_\comp(\HH^n_\kappa\setminus\cO)), \\
      \tilde u|_{\R_t\times\pa\cO} = 0,
    \end{cases}
  \]
  in the case of \eqref{EqHypFormulaForc}, and of the forward problem for
  \[
    \begin{cases}
      (D_t^2 - P_\kappa)\tilde u = 0, \\
      \tilde u|_{\R_t\times\pa\cO} = f \in \CIc(\R_t;H^{3/2}(\pa\cO)),
    \end{cases}
  \]
  in the case of \eqref{EqHypFormulaBVP}; the connection is via the Fourier transform in $t$, with $\lambda$ the Fourier dual variable.
\end{rmk}

\begin{proof}[Proof of Lemma~\ref{LemmaHypFormula}]
  Denoting the right hand sides of equations~\eqref{EqHypFormulaForc} and \eqref{EqHypFormulaBVP} by $\tilde m_\kappa(\lambda)$ and $\tilde m^\cB_\kappa(\lambda)$, respectively, we note that the formulas~\eqref{EqHypBVPbyForcing} and \eqref{EqHypForcingbyBVP} imply $\tilde m_\kappa(\lambda)=\tilde m^\cB_\kappa(\lambda)$. In view of \eqref{eq:samemult}, it therefore suffices to prove $m_\kappa(\lambda)=\tilde m_\kappa(\lambda)$. The inequality $m_\kappa(\lambda)\leq\tilde m_\kappa(\lambda)$ is trivial; if $R_\kappa(\lambda)$ were a general finite-meromorphic operator family, the reverse inequality would in general be false. The key here is the special structure of $R_\kappa(\lambda)$ as the meromorphic continuation of the \emph{spectral family of a fixed operator}, see \cite[Theorem~4.7]{dizzy}, which holds in great generality:
  \[
    R_\kappa(\lambda) = \sum_{j=1}^{M_\lambda} \frac{(P_\kappa-\lambda^2)^{j-1}\Pi}{(\zeta^2-\lambda^2)^j} + A(\zeta),
  \]
  with $A$ holomorphic near $\zeta=\lambda$, and $\Pi\colon L^2_\comp(\HH^n_\kappa\setminus\cO)\to L^2_\loc(\HH^n_\kappa\setminus\cO)$ a finite rank operator. Moreover, $P_\kappa-\lambda^2\colon\ran\Pi\to\ran\Pi$, and $(P_\kappa-\lambda^2)^{M_\lambda}\Pi=0$.
  
  Pick a finite-dimensional vector space $V\subset L^2_\comp$ such that $\Pi\colon V\to\ran\Pi$ isomorphically. Identifying $\ran\Pi$ with $V$ via $\Pi|_V$ and choosing a basis of $V$, $\Pi$ is an $M\times M$ matrix, with $M=\rank\Pi$, and $N:=(\Pi|_V)^{-1}(P_\kappa-\lambda^2)\Pi|_V$ is nilpotent. We note that $m_\kappa(\lambda)=\rank\Pi$; this follows from
  \[
    \frac{1}{2\pi i}\oint_\lambda R_\kappa(\zeta)\,d\zeta = (2\lambda)^{-1}\Id + \sum_{j=1}^{M_\lambda-1} \frac{(-1)^j(2j)!}{j!^2(2\lambda)^{2j+1}}N^j
  \]
  on $V$, and the invertibility of operators, such as the one appearing on the right hand side, which differ from the identity by a nilpotent operator.
  
  Expanding $f(\zeta)$ in \eqref{EqHypFormulaForc} in Taylor series in $\zeta^2-\lambda^2$ around $\zeta=\lambda$, the statement of the lemma is reduced to the linear algebra problem to show that
  \[
    M = \dim\biggl\{ \Res_{\zeta=\lambda} \sum_{0\leq\ell<j\leq M_\lambda} e^{-i\zeta t}(\zeta^2-\lambda^2)^{\ell-j}N^j f_\ell \colon f_0,\ldots,f_{M_\lambda-1}\in \C^M \biggr\},
  \]
  with $N$ a nilpotent element of $\C^{M\times M}$, $N^{M_\lambda}=0$. It suffices to show this when $N$ is a single nilpotent Jordan block. But when (abusing notation) $N$ is a nilpotent $M\times M$ Jordan block, and when $f_j=(f_{j,p})_{p=0,\ldots,M-1}$, then the space of vectors of the form
  \begin{align*}
    \sum_{0\leq\ell<j\leq M}&\frac{1}{(j-\ell-1)!}\pa_\zeta^{j-\ell-1}\frac{e^{-i\zeta t}}{(\zeta+\lambda)^{j-\ell}}\biggr|_{\zeta=\lambda} N^j f_\ell \\
    &= \sum_{\ell=0}^{M-1} \frac{1}{\ell!}\pa_\zeta^\ell\frac{e^{-i\zeta t}}{(\zeta+\lambda)^{\ell+1}}\biggr|_{\zeta=\lambda}\biggl(\sum_{j=0}^{M-1-\ell} N^{\ell+j} f_j\biggr)
  \end{align*}
  has the same dimension as the space of $M$-tuples of vectors in $\C^M$
  \[
    \biggl(\sum_{j=0}^{M-1-\ell} N^{\ell+j} f_j\biggr)_{\ell=0,\ldots,M-1} = \biggl( \sum_{q=0}^{M-1-(\ell+p)} f_{q,q+\ell+p} \biggr)_{\ell,p=0,\ldots,M-1},
  \]
  which is the space of $M\times M$ Hankel matrices, and this space is $M$-dimensional, finishing the proof.
\end{proof}

Using the characterization~\eqref{EqHypFormulaBVP}, we now prove Proposition~\ref{PropHypBVPPoles}:

\begin{proof}[Proof of Proposition~\ref{PropHypBVPPoles}]
  Putting $G_\kappa(\zeta)$ near a resonance $\lambda$, $\Im\lambda<0$, into a normal form, see \cite[Theorem~C.7]{dizzy}, it suffices to prove the following abstract statement: if
  \[
    G(\zeta)=\sum_{j=1}^M (\zeta-\lambda)^j \Pi_j + \Bigl(\Id-\sum_{j=1}^M\Pi_j\Bigr)
  \]
  with $M\geq 1$, the $\Pi_j$ finite rank projections, $\Pi_M\neq 0$, $\Pi_j\Pi_k=0$ for $j\neq k$, is a holomorphic family of Fredholm operators acting on a Banach space $X$, and $S(\zeta)\colon X\to Y$ is a holomorphic family of \emph{injective} operators from $X$ into a Fr\'echet space $Y$, then
  \begin{align*}
    W &:= \dim \bigl\{\Res_{\zeta=\lambda} e^{-i\zeta t}S(\zeta)G(\zeta)^{-1}f(\zeta) \colon f(\zeta)\ \text{holomorphic with values in}\ X\bigr\} \\
    &= \frac{1}{2\pi i}\tr\oint_\lambda \pa_\zeta G(\zeta)\,G(\zeta)^{-1}\,d\zeta.
  \end{align*}
  By direct computation, the right hand side is equal to $\sum_{j=1}^M j\rank\Pi_j$. If we denote by
  \[
    W_k := W \cap \biggl\{ \sum_{j=1}^k e^{-i\lambda t}t^{j-1} f_j\colon f_j\in X \biggr\}, \ \ k=1,\ldots,M,
  \]
  the space of all generalized mode solutions of $(D_t^2-P_\kappa)\tilde u=0$ with frequency $\lambda$ for which the highest power of $t$ is at most $t^{k-1}$, it therefore suffices to show
  \begin{equation}
  \label{EqHypBVPPolesQuot}
    \dim W_k/W_{k-1} = \sum_{j\geq k} \rank\Pi_j,
  \end{equation}
  To see this, expand $S(\zeta)=\sum_{j\geq 0}(\zeta-\lambda)^j S_j$, and note that, for $f_\ell\in X$,
  \begin{equation}
  \label{EqHypBVPPolesQuotWhy}
    \Res_{\zeta=\lambda}e^{-i\zeta t}S(\zeta)G(\zeta)^{-1}\sum_{\ell<M} f_\ell(\zeta-\lambda)^\ell \\
  \end{equation}
  lies in $W_M\setminus W_{M-1}$ unless $\Pi_M f_0=0$, due to the injectivity of $S_0=S(\lambda)$; in this case, the coefficient of $e^{-i\lambda t}(-i t)^{M-2}/(M-2)!$ equals
  \[
    S_0(\Pi_{M-1}f_0+\Pi_M f_1) + S_1\Pi_M f_0 = S_0(\Pi_{M-1}f_0 + \Pi_M f_1),
  \]
  which is non-zero unless $\Pi_{M-1}f_0=0$ and $\Pi_M f_1=0$; and so forth. In general, the generalized mode \eqref{EqHypBVPPolesQuotWhy} lies in $W_M\setminus W_k$ unless
  \[
    \Pi_{k+1+j+\ell}f_j = 0,\ \ 0\leq j<M-k,\ 0\leq\ell\leq M-(k+1+j),
  \]
  and it lies in $W_{k-1}$ if and only if this holds true for $k$ replaced by $k-1$. In other words, the map
  \begin{align*}
    &
   \bigoplus_{j=k}^M \ran\Pi_j \ni \sum_{j=k}^M \Pi_j f_{j-k} 
   \mapsto \biggr[\Res_{\zeta=\lambda}\biggl(e^{-i\zeta t}S(\zeta)G(\zeta)^{-1}\sum_{\ell=0}^{M-k} f_\ell(\zeta-\lambda)^\ell\biggr)\biggr] \in W_k/W_{k-1}
  \end{align*}
  is an isomorphism. This proves \eqref{EqHypBVPPolesQuot}, and hence the proposition.
\end{proof}

\begin{proof}[Proof of Theorem~\ref{ThmMain}]
  Let us fix a precompact open set $\Lambda\subset\C$ with smooth boundary such that $\Res(\cO,0)\cap\pa\Lambda=\emptyset$. We will show that
  \begin{equation}
  \label{EqMainConv}
    \sum_{\lambda\in\Lambda\cap\Res(\cO,\kappa)} m_\kappa(\lambda) = \sum_{\lambda\in\Lambda\cap\Res(\cO,0)} m_0(\lambda)
  \end{equation}
  for small $0\leq\kappa<\kappa_0$. This suffices to prove the theorem; indeed, to show that the resonances of $P_\kappa$ in a precompact open set $\Lambda'\subset\C$ with $\Res(\cO,0)\cap\pa\Lambda'=\emptyset$ are $\eps$-close to those of $P_0$ for $\kappa$ small (depending on $\Lambda'$ and $\eps$), denote $\Res(\cO,0)\cap\Lambda'=\{\lambda_1,\ldots,\lambda_N\}$ ($N\geq 0$); one then applies \eqref{EqMainConv} to the sets $\Lambda_j:=\{\lambda\in\C\colon |\lambda-\lambda_j|<\eps'\}$, with $\eps'\in(0,\eps)$ chosen such that $|\lambda_j-\lambda_k|>\eps'$ for all $j\neq k$; this shows that $\Lambda_j$ contains $m_0(\lambda_j)$ resonances of $P_\kappa$, counted with multiplicity, for $\kappa$ small. On the other hand, applying \eqref{EqMainConv} to the complement $\Lambda_c:=\{\lambda\in\Lambda'\colon|\lambda-\lambda_j|>\eps'/2,\ j=1,\ldots,N\}$ shows that $P_\kappa$ has no resonances in $\Lambda_c$ either for small $\kappa$, as desired.

  As a preliminary step towards \eqref{EqMainConv}, we show:
  \begin{equation}
  \label{EqMainConvReal}
    \parbox{0.7\textwidth}{There exists an open neighborhood $\cU\supset\R$ which contains no resonances of $P_\kappa$ for all $0\leq\kappa<\kappa_0$, $\kappa_0$ small.}
  \end{equation}
  The proof of this relies on a slight modification of the construction~\eqref{EqHypBVPCont}. Namely, we use the double layer potential
  \[
    \Dl_\kappa(\lambda)g(x) := \int_{\pa\cO} \pa_{\nu_y}R_\kappa^0(\lambda;x,y)g(y)\,d\sigma_\kappa(y),\ \ x\in\HH^n_\kappa\setminus\pa\cO,
  \]
  which satisfies
  \begin{align*}
    (\Dl_\kappa(\lambda)g)_\pm &= \frac{1}{2}(\pm g+N_\kappa(\lambda)g), \\
      &\qquad N_\kappa(\lambda)g(x) := 2\int_{\pa\cO} \pa_{\nu_y}R_\kappa^0(\lambda;x,y)g(y)\,d\sigma_\kappa(y),\ \ x\in\pa\cO,
  \end{align*}
  with $N_\kappa(\lambda)\in\Psi^{-1}(\pa\cO)$, and $(\pa_\nu\Dl_\kappa(\lambda)g)_+=(\pa_\nu\Dl_\kappa(\lambda)g)_-$. In order to solve the outgoing boundary value problem~\eqref{EqHypBVP}, we make the new ansatz
  \begin{equation}
  \label{EqMainDlAnsatz}
    u = (i\Sl_\kappa(\lambda)+\Dl_\kappa(\lambda))g,
  \end{equation}
  which satisfies the boundary condition provided $(I+N_\kappa(\lambda)+2 i G_\kappa(\lambda))g=f$. Since the operator $I+N_\kappa(\lambda)+2 i G_\kappa(\lambda)\colon H^s(\pa\cO)\to H^s(\pa\cO)$ is Fredholm with index $0$, we conclude that this is solvable provided this operator is injective. Consider $\lambda\in\R$. If $g$ is an element of the kernel, then $u$, defined as in \eqref{EqMainDlAnsatz}, satisfies $u_+=0$ and $(P_\kappa-\lambda^2)u=0$ in $\HH^n_\kappa\setminus\cO$, hence $u\equiv 0$ there if $\kappa=0$, or if $\kappa>0$ and $\lambda\in\R\setminus\{0\}$, and we conclude that in these cases
  \begin{align*}
    u_- &= i G_\kappa(\lambda)g + \frac{1}{2}(-I+N_\kappa(\lambda))g = -g, \\
    \pa_\nu u_- &= i\bigl((\pa_\nu\Sl_\kappa(\lambda)g)_- -(\pa_\nu\Sl_\kappa(\lambda)g)_+\bigr) = i g.
  \end{align*}
  Thus, integrating over $\cO$, we have
  \[
    0 = \Im\big\la(\Delta_{g_\kappa}-\bigl(\tfrac{n-1}{2}\bigr)^2\kappa^2-\lambda^2)u,u\big\ra = \frac{1}{2 i}\int_{\pa\cO} \pa_\nu u\ol u-u\ol{\pa_\nu u}\,d\sigma_\kappa = -\int_{\pa\cO} |g|^2\,d\sigma_\kappa,
  \]
  hence $g=0$, proving injectivity. Therefore, we can write
  \begin{equation}
  \label{EqMainAltB}
    \cB_\kappa(\lambda) = (i\Sl_\kappa(\lambda)+\Dl_\kappa(\lambda))(I+N_\kappa(\lambda)+2 i G_\kappa(\lambda))^{-1},
  \end{equation}
  which we have just shown is regular for $\lambda\in\R$ if $\kappa=0$, and $0\neq\lambda\in\R$ if $\kappa>0$. From the expression \eqref{EqMainAltB} and using Lemma~\ref{LemmaHypFree}, one sees that the regularity of $\cB_0(\lambda)$ at $\lambda=0$ implies that of $\cB_\kappa(\lambda)$ there when $\kappa>0$ is sufficiently small. Hence, $\cB_\kappa(\lambda)$ is regular for all $\lambda\in\R$ for sufficiently small $\kappa$. A simple continuity argument proves \eqref{EqMainConvReal}.

  Thus, it suffices to prove \eqref{EqMainConv} when $\Lambda$ is precompact \emph{in the lower half plane}, that is, $\ol\Lambda\subset\{\Im\lambda<0\}$. In this case, we can use Proposition~\ref{PropHypBVPPoles}, together with Rouch\'e's Theorem for operator-valued functions, see \cite[Theorem~C.9]{dizzy}; concretely, if $\kappa$ is so small that $\|G_0(\zeta)^{-1}(G_0(\zeta)-G_\kappa(\zeta))\|_{L^2}<1$ for $\zeta\in\pa\Lambda$, then
  \[
    \tr\frac{1}{2\pi i}\oint_{\pa\Lambda} \pa_\lambda G_\kappa(\zeta)\,G_\kappa(\zeta)^{-1}\,d\zeta = \tr\frac{1}{2\pi i}\oint_{\pa\Lambda} \pa_\lambda G_0(\zeta)\,G_0(\zeta)^{-1}\,d\zeta,
  \]
  which is the same as \eqref{EqMainConv}.
\end{proof}

\def\arXiv#1{\href{http://arxiv.org/abs/#1}{arXiv:#1}}


\begin{thebibliography}{0}

\bibitem[Bo10]{bo:upper}
D.~Borthwick.
\newblock Sharp upper bounds on resonances for perturbations of hyperbolic
  space.
\newblock {\em Asymptotic Analysis}, 69(1-2):45--85, 2010.

\bibitem[Bu02]{burq} N.~Burq,
\emph{Semi-classical estimates for the resolvent in nontrapping geometries,}
Int. Math. Res. Not. {\bf 5}(2002), 221--241. 

\bibitem[BuLe01]{boule}
N.~Burq and G.~Lebeau, 
{\em Mesures de d\'efaut de compacit\'e, application au syst\`eme de Lam\'e,}
Ann. Sci. \'Ecole Norm. Sup. {\bf 34}(2001), 817--870. 

\bibitem[DaRo08]{DafermosRodnianskiLectureNotes}
M.~Dafermos and I.~Rodnianski.
\newblock Lectures on black holes and linear waves.
\newblock {\em Evolution equations, Clay Mathematics Proceedings}, 17:97--205,
  2008.

\bibitem[Dy11]{DyatlovQNMExtended}
S.~Dyatlov.
\newblock Exponential energy decay for {K}err--de {S}itter black holes beyond
  event horizons.
\newblock {\em Mathematical Research Letters}, 18(5):1023--1035, 2011.




\bibitem[DyZw]{dizzy} S.~Dyatlov and M.~Zworski,
		\emph{Mathematical theory of scattering resonances,\/}
                book in preparation; \url{http://math.mit.edu/~dyatlov/res/}

\bibitem[FeLa90]{fela}
C.~Fernandez and R.~Lavine.
\newblock Lower bounds for resonance widths in potential and obstacle
  scattering.
\newblock {\em Communications in Mathematical Physics}, {\bf 128}(1990), 263--284.


\bibitem[Gui05]{Gui05}
C.~Guillarmou.
\newblock Meromorphic properties of the resolvent on asymptotically hyperbolic
  manifolds.
\newblock {\em Duke Math. J.}, 129(1):1--37, 2005.

\bibitem[H\"o85]{H3} L.~H{\"o}rmander,
        \emph{The Analysis of Linear Partial Differential Operators III. 
              Pseudo-Differential Operators,\/}
        Springer Verlag, 1985.
        
\bibitem[HiVa15a]{HV15}
P.~Hintz and A.~Vasy.
\newblock Semilinear wave equations on asymptotically de {S}itter, {K}err--de
  {S}itter and {M}inkowski spacetimes.
\newblock {\em Anal. PDE}, 8(8):1807--1890, 2015.

\bibitem[HiVa15b]{HiVaForms}
P.~Hintz and A.~Vasy.
\newblock Asymptotics for the wave equation on differential forms on {K}err-de
  {S}itter space.
\newblock {\em Preprint, arXiv:1502.03179}, 2015.

\bibitem[HiVa16]{HiVaStab}
P.~Hintz and A.~Vasy.
\newblock The global non-linear stability of the {K}err--de {S}itter family of
  black holes.
\newblock {\em Preprint, arXiv:1606.04014}, 2016.
 
\bibitem[HiZw17]{HZ} P.~Hintz and M.~Zworski, 
{\em Wave decay for star-shaped obstacles in $ \RR^3 $: papers
of Morawetz and Ralston revisited,} expository note, available on arXiv.

\bibitem[LaPh68]{LP} P.~D.~Lax and R.~S.~Phillips,
        \emph{Scattering theory,}
      Academic Press 1968.

\bibitem[Ma91]{Ma:uniq}
R.~Mazzeo.
\newblock Unique continuation at infinity and embedded eigenvalues for
  asymptotically hyperbolic manifolds.
\newblock {\em Amer. J. Math.}, 113(1):25--45, 1991.

\bibitem[MeSj78]{mels1} R.~B.~Melrose and J.~Sj\"ostrand, {\em Singularities of boundary value problems. I,} Comm. Pure Appl. Math. {\bf 31}(1978), 593--617.

\bibitem[MeSj82]{mels2} R.~B.~Melrose and J.~Sj\"ostrand, {\em Singularities of boundary value problems. II,} Comm. Pure Appl. Math. {\bf 35}(1982), 129--168.

\bibitem[Mo66a]{Moa} C.~Morawetz, 
\emph{Exponential decay of solutions of the wave equation,} 
Comm.~Pure~Appl.~Math. {\bf 19}(1966), 439--444.

\bibitem[Mo66b]{Moe} C.~Morawetz,
\emph{Energy Identities for the Wave Equation,}
New York Univ., Courant Inst. Math. Sci., Res. Rep. No. IMM346,1966,
\url{https://archive.org/details/energyidentities00mora}

\bibitem[Mo72]{Mo2} C.~Morawetz,
\emph{On the modes of decay for the wave equation in the exterior of 
a reflecting body,} Proc.~Roy.~Irish~Acad.~Sect. A {\bf 72}(1972), 113--120.

\bibitem[PS92]{PetkovStoyanov}
Vesselin~M. Petkov and Luchezar~N. Stoyanov.
\newblock Geometry of reflecting rays and inverse spectral problems.
\newblock 1992.

\bibitem[Ra78]{RalstonDecay} J.~Ralston, 
\emph{Addendum to: ``The first variation of the scattering matrix''} 
(J. Differential Equations {\bf 21}(1976), no. 2, 378--394) 
by J.~W.~Helton and J.~Ralston. J. Differential Equations {\bf 28}(1978), 155--162. 





\bibitem[Ta11a]{Tay1} M.~E.~Taylor, 
{\em Partial Differential Equations I. 
Basic Theory}, Applied Mathematical Sciences
Volume {\bf 115}, Springer, 2011.

\bibitem[Ta11b]{Tay} M.~E.~Taylor, 
{\em Partial Differential Equations II. 
Qualitative Studies of Linear Equations}, Applied Mathematical Sciences
Volume {\bf 116}, Springer, 2011.

\bibitem[Va73]{Vai} B.~R.~Vainberg{},
	\emph{Exterior elliptic problems that depend polynomially on the spectral parameter and the asymptotic  
    behavior for large values of the time of the solutions of nonstationary problems,\/} (Russian) Mat. Sb. (N.S.)
    \textbf{92}(134)(1973), 224--241.

\bibitem[Va12]{vasy2} A.~Vasy,
	\emph{Microlocal analysis of asymptotically hyperbolic spaces and high energy resolvent estimates,\/}
	Inverse problems and applications. Inside Out II, edited by Gunther Uhlmann, Cambridge University Press,
	MSRI publications \textbf{60}(2012).


\bibitem[Va13]{vasy1} A.~Vasy,
	\emph{Microlocal analysis of asymptotically hyperbolic and Kerr--de Sitter spaces,\/}
	with an appendix by Semyon Dyatlov,
	Invent. Math. \textbf{194}(2013), 381--513.

\bibitem[Zw12]{ZwBook} M.~Zworski,
  \emph{Semiclassical analysis,\/}
  Graduate Studies in Mathematics \textbf{138}, AMS 2012.
	
\bibitem[Zw16]{V4D} M.~Zworski, 
{\em Resonances for asymptotically hyperbolic manifolds: Vasy's method revisited,} Journal of Spectral Theory, {\bf 6}(2016), 1087--1114.


\bibitem[Zw17]{revres} M.~Zworski,
{\em Mathematical study of scattering resonances,} 
 Bull.~Math.~Sci. {\bf 7}(2017), 1--85.

\end{thebibliography}
\end{document}